\documentclass[11pt,reqno]{amsart}

\usepackage{graphicx}
\usepackage[top=20mm, bottom=20mm, left=30mm, right=30mm]{geometry}
\usepackage{amsmath,amssymb,amsfonts,amscd} 
\usepackage{fancyvrb}
\usepackage{ulem} 
\usepackage{url} 
\usepackage{fancybox} 
\usepackage{empheq} 
\usepackage{mdframed} 
\usepackage{thmtools} 
\usepackage{mboxfill} 

\usepackage{relsize} 

\theoremstyle{plain}
\newtheorem{theorem}{Theorem}[section]
\newtheorem{lemma}[theorem]{Lemma}

\newtheorem{cor}[theorem]{Corollary}
\newtheorem{prop}[theorem]{Proposition}

\theoremstyle{definition}
\newtheorem{definition}[theorem]{Definition}
\newtheorem{example}[theorem]{Example}

\theoremstyle{remark}
\theoremstyle{definition}
\newtheorem{remark}[theorem]{\textit{Remark}}

\usepackage[only,boxdot,boxast,llbracket,rrbracket]{stmaryrd}

\usepackage{ifthen} 
\provideboolean{True} 
\setboolean{True}{true} 
\provideboolean{False} 
\setboolean{False}{false} 

\usepackage{caption} 
\captionsetup[table]{labelfont=bf,position=top,justification=raggedright,hangindent=0pt,skip=0pt} 
\newcommand{\setcaption}[3]{ 
     \bigskip\hrule\bigskip 
     \addtocounter{table}{-1} 
     \captionlistentry*[#1]{(#2)} 
     \caption[#1]{#2 \\ \smaller\it{#3}} 
     \smallskip 
     \addtocounter{table}{1} 
} 

\usepackage[usenames,dvipsnames]{xcolor} 
\usepackage{longtable} 
\usepackage{tabu} 
\usepackage{colortbl} 

\newcolumntype{T}[2]{>{\relsize{#1}}#2<{}}
\def\TblW{1.0\textwidth} 

\newcounter{tbleqn}[table] 
\newenvironment{teq}{%
     \refstepcounter{tbleqn} 
}{%
}

\newcommand{\TagAndLabelTableEqn}[2]{ 
     \begin{teq}
        \label{#1} 
        \end{teq} 
        \ifthenelse{\boolean{#2}}{\small{\eqref{#1}}}{ } 
} 
\newcommand{\LabelTableEqn}[1]{\TagAndLabelTableEqn{#1}{True}}

\colorlet{DefEmphTextColor}{Blue!85!Gray} 
\colorlet{OTblBC}{Plum!85!Black} 
\colorlet{TblHdrBg}{Plum!50!White} 
\colorlet{ITblBC}{Plum!100!White} 

\renewcommand{\emph}[1]{\textit{#1}}

\newcommand{\keywordemph}[1]{{\color{TealBlue!100!White}{{\emph{#1}}}}} 
\renewcommand{\keywordemph}[1]{{\color{Magenta!100!White}{{\emph{#1}}}}} 

\renewcommand{\keywordemph}[1]{\emph{#1}}
\newcommand{\emphonce}[1]{\emph{#1}} 
\newcommand{\Section}[0]{\S} 
\newcommand{\sref}[1]{Section\ \ref{#1}} 

\newcommand{\cf}[0]{\textit{cf}.\ } 
\newcommand{\ie}[0]{\textit{i.e.},\ }

\usepackage{cancel} 
\newcommand{\nequiv}[0]{\ensuremath{\neq}}

\newcommand{\gkpSII}[2]{\ensuremath{\genfrac{\lbrace}{\rbrace}{0pt}{}{#1}{#2}}} 
\newcommand{\Iverson}[1]{\ensuremath{\left[#1\right]_{\delta}}}

\newcommand{\QPochhammer}[3]{\ensuremath{\left(#1; #2\right)_{#3}}} 
\newcommand{\QP}[1]{\ensuremath{\left(#1\right)_{\infty}}} 
\newcommand{\EGFSeriesTransform}[2]{\ensuremath{\EGF_{#1}\left[\left[#2\right]\right]}} 
\newcommand{\OGFSeriesTransform}[2]{\ensuremath{\OGF_{#1}\left[\left[#2\right]\right]}} 

\renewcommand{\Re}{\RePart} 
\renewcommand{\Im}{\ImPart} 

\DeclareMathOperator{\sq}{sq} 
 
\DeclareMathOperator{\scFn}{sc} 
 
\DeclareMathOperator{\ImPart}{Im} 
\DeclareMathOperator{\RePart}{Re} 
\DeclareMathOperator{\Log}{Log} 
\DeclareMathOperator{\EGF}{EGF} 
\DeclareMathOperator{\OGF}{OGF} 
\DeclareMathOperator{\Li}{Li} 
\DeclareMathOperator{\NumFn}{Num} 

\newcommand{\StartGroupingSubEquations}{\begin{subequations}} 
\newcommand{\EndGroupingSubEquations}{\end{subequations}} 

\newcommand{\citep}{\cite}

\title[Square Series Generating Function Transformations]{
       Square Series Generating Function Transformations}  

\author[Maxie D. Schmidt]{Maxie D. Schmidt \\ \\ 
        School of Mathematics \\ 
        Georgia Institute of Technology \\ 
        117 Skiles Building \\ 
        686 Cherry Street NW \\ 
        Atlanta, GA 30332 \\ \\ 
        \texttt{maxieds@gmail.com}}         
\address{School of Mathematics, Georgia Institute of Technology, Atlanta, GA 30332}
\email{maxieds@gmail.com} 
\thanks{} 

\date{2017.03.24-v1} 

\subjclass[2010]{05A15; 44A99; 33E99; 11B73.} 

\keywords{
Square series, generating function, series transformation, 
integral representation, gamma function, double factorial, 
Stirling number of the second kind, 
Jacobi theta function, Ramanujan theta function. 
} 

\allowdisplaybreaks

\begin{document}

\begin{abstract} 
We construct new integral representations for transformations of the 
ordinary generating function for a sequence, $\langle f_n \rangle$, 
into the form of a generating function that enumerates the 
corresponding ``square series'' generating function for the 
sequence, $\langle q^{n^2} f_n \rangle$, 
at an initially fixed non-zero $q \in \mathbb{C}$. 
The new results proved in the article 
are given by integral--based 
transformations of ordinary generating function series 
expanded in terms of the Stirling numbers of the second kind. 
We then employ known integral representations for the gamma and 
double factorial functions 
in the construction of these square series transformation integrals. 
The results proved in the article lead to new applications and 
integral representations for special function series, 
sequence generating functions, and other related applications. 
A summary \emph{Mathematica} notebook providing derivations of key results and 
applications to specific series is provided online as a supplemental 
reference to readers. 
\end{abstract}

\newpage\maketitle

\section{Notation and Conventions} 
\label{subSection_Intro_Notation_Conventions} 
\label{Section_PostIntro_Notation_Conventions} 

Most of the notational conventions within the article are 
consistent with those employed in the references \citep{GKP,NISTHB}. 
Additional notation for special parameterized classes of the 
\emph{square series} expansions studied in the article is defined in 
Table \ref{table_Notation_SeriesDefsOfSpecialSqSeries-stmts_v1} on page 
\pageref{table_Notation_SeriesDefsOfSpecialSqSeries-stmts_v1}. 
We utilize this notation for these generalized classes of square series 
functions throughout the article. 
The following list provides a description of the other 
primary notations and related conventions employed throughout the 
article specific to the handling of sequences and the 
coefficients of formal power series: 
\begin{itemize} 
     \newcommand{\itemmarker}{\tiny{$\blacktriangleright$}} 
     \newcommand{\itemlabel}[1]{\textbf{#1}:} 
     \newcommand{\itemII}[1]{\item[\itemmarker] \itemlabel{#1} \\ } 
     \renewcommand{\itemII}[1]{\item[\itemmarker] \itemlabel{#1}} 

\itemII{Sequences and Generating Functions} 
The notation $\langle f_n \rangle \equiv \{f_0, f_1, f_2, \ldots\}$ is 
used to specify an infinite sequence over the 
natural numbers, $n \in \mathbb{N}$, where we define 
$\mathbb{N} = \{0, 1, 2, \ldots\}$ and 
$\mathbb{Z}^{+} = \{1, 2, 3, \ldots\}$. 
The ordinary (OGF) and exponential (EGF) generating functions for a 
prescribed sequence, $\langle f_n \rangle$, are defined by the respective 
formal power series in the notation of 
\begin{align*} 
F_f(z) & := \OGFSeriesTransform{z}{f_0, f_1, f_2, \ldots} \equiv 
     \sum_{n=0}^{\infty} f_n z^n \\ 
\widehat{F}_f(z) & := \EGFSeriesTransform{z}{f_0, f_1, f_2, \ldots} \equiv 
     \sum_{n=0}^{\infty} f_n \frac{z^n}{n!}. 
\end{align*} 

\itemII{Power Series and Coefficient Extraction} 
Given the (formal) power series, $F_f(z) := \sum_{n} f_n z^n$, in $z$ 
that enumerates a sequence taken over $n \in \mathbb{Z}$, the 
notation used to extract the coefficients of the 
power series expansion of $F_f(z)$ is defined as 
$f_n \equiv [z^n] F_f(z)$. 

\itemII{Iverson's Convention} 
The notation 
$\Iverson{\mathtt{cond}}$ is used as in the references \citep{GKP} to 
specify the value $1$ if the Boolean--valued input 
\texttt{cond} evaluates to true and $0$ otherwise. 
The notation $\Iverson{n = k = 0}$ is equivalent to 
$\delta_{n,0} \delta_{k,0}$, and where 
$\delta: \mathbb{Z}^{2} \rightarrow \{0,1\}$ is 
\keywordemph{Kronecker's delta function} and 
$\delta_{i,j} = 1$ if and only if $i = j$. 

\end{itemize} 

\section{Introduction} 
\label{Section_Introduction} 

\subsection{Motivation} 

Many generating functions and special function series of interest in 
combinatorics and number theory satisfy so-termed 
\emph{unilateral} or \emph{bilateral} ``\emph{square series}'' 
power series expansions of the form 
\begin{align} 
\label{eqn_Fsq_SqSeries_first_series_def-intro_v0.1} 
\widetilde{F}_{\sq}(f; q, z) & := 
     \sum_{n \in \mathbb{S}} f_n q^{n^2} z^n, 
\end{align} 
for some $q \in \mathbb{C}$, a prescribed sequence, 
$\langle f_n \rangle$, whose 
\emphonce{ordinary generating function}, 
is analytic on an open disk, $z \in R_0(f)$,  
and for some indexing set $\mathbb{S} \subseteq \mathbb{Z}$. 
Bilateral square series expansions in the form of 
\eqref{eqn_Fsq_SqSeries_first_series_def-intro_v0.1} 
arise in the series expansions of classical identities for infinite products 
such as in the famous \keywordemph{triple product identity} 
\begin{align} 
\label{eqn_JacobiTripleProd-intro_stmt} 
\sum_{n=-\infty}^{\infty} x^{n^2} z^n & = 
     \prod_{n=1}^{\infty} (1-x^{2n})(1+x^{2n-1} z)(1+x^{2n-1} z^{-1}), 
\end{align} 
and in the \keywordemph{quintuple product identity} given by 
\begin{align} 
\label{eqn_QunitupleProductIdent_stmt-intro_stmt} 
\sum_{n=-\infty}^{\infty} (-1)^n q^{n(3n-1) / 2} z^{3n} (1-z q^n) & = 
     \left(q, -z, -q/z; q\right)_{\infty} \left(q z^2, q / z^2; q^2 
     \right)_{\infty}, 
\end{align} 
for $|q| < 1$, any $z \neq 0$, and where the 
\keywordemph{$q$--Pochhammer symbols}, 
$\QPochhammer{a}{q}{n} \equiv (1-a)(1-aq) \cdots (1-a q^{n-1})$ and 
$\QPochhammer{a_1,a_2,\ldots,a_r}{q}{n} := 
 \prod_{i=1}^{r} \QPochhammer{a_i}{q}{n}$, 
denote the multiple infinite products 
on the right-hand-side of \eqref{eqn_QunitupleProductIdent_stmt-intro_stmt} 
\citep[\S 17.8, \S 20.5]{NISTHB} \citep[\S 19]{HARDYWRIGHTNUMT}. 

The \keywordemph{Jacobi theta functions}, denoted by 
$\vartheta_i(u, q)$ for $i = 1,2,3,4$, or by 
$\vartheta_i\left(u \vert \tau\right)$ when the 
\keywordemph{nome} $q \equiv \exp\left(\imath\pi\tau\right)$ 
satisfies $\Im(\tau) > 0$, 
form another class of square series expansions of 
special interest in the applications considered within the article. 
The classical forms of these theta functions satisfy the 
respective \emph{bilateral} and corresponding asymmetric, \emph{unilateral} 
\emph{Fourier--type} square series expansions given by 
\citep[\S 20.2(i)]{NISTHB} 
\label{def_JacobiThetaFns_VarThetaizq_FourierSeries_exps} 
\label{ex_Asymm-Unilateral-SqSeries_Variants_and_GFs} 
\begin{subequations} 
\begin{alignat}{2} 
\notag 
\vartheta_1(u, q) & = 
     \sum_{n=-\infty}^{\infty} 
     q^{\left(n+\frac{1}{2}\right)^2} (-1)^{n-1/2} e^{(2n+1) \imath u} && = 
     2 q^{1/4} \sum_{n=0}^{\infty} q^{n(n+1)} (-1)^{n} 
     \sin\left((2n+1) u\right) \\ 
\notag 
\vartheta_2(u, q) & = 
     \sum_{n=-\infty}^{\infty} 
     q^{\left(n+\frac{1}{2}\right)^2} e^{(2n+1) \imath u} && = 
     2 q^{1/4} \sum_{n=0}^{\infty} q^{n(n+1)} 
     \cos\left((2n+1) u\right) \\ 
\notag 
\vartheta_3(u, q) & = 
     \sum_{n=-\infty}^{\infty} 
     q^{n^2} e^{2n \imath u} && = 
     1 + 2 \sum_{n=1}^{\infty} q^{n^2} \cos\left(2n u\right) \\ 
\notag 
\vartheta_4(u, q) & = 
     \sum_{n=-\infty}^{\infty} 
     q^{n^2} (-1)^{n} e^{2n \imath u} && = 
     1 + 2 \sum_{n=1}^{\infty} q^{n^2} (-1)^{n} \cos\left(2n u\right). 
\end{alignat} 
\end{subequations} 
Additional square series expansions related to these forms are derived 
from the special cases of the Jacobi theta functions 
defined by $\vartheta_i(q) \equiv \vartheta_i(0, q)$, and by 
$\vartheta_i^{(j)}(u, q) \equiv 
 \partial^{(j)}{\vartheta_i(u_0, q)} / \partial{u_0}^{(j)} \vert_{u_0=u}$ 
or by $\vartheta_i^{(j)}(q) \equiv \vartheta_i^{(j)}(0, q)$ 
where the higher-order $j^{th}$ derivatives are taken over 
any fixed $j \in \mathbb{Z}^{+}$. 

\begin{definition}[Square Series Generating Functions] 
For a given infinite sequence, $\langle f_n \rangle = \{f_0, f_1, f_2, \ldots\}$, 
its ordinary generating function (OGF) is defined as the formal power series 
\begin{align*}
F_f(z) & := \sum_{n=0}^{\infty} f_n z^n, 
\end{align*} 
while its \emph{square series expansion}, or corresponding 
\emph{square series generating function}, is defined by 
\begin{align} 
\label{eqn_Unilateral_Fsq_SqSeries_GF} 
F_{\sq}(f; q, z) & := \sum_{n \geq 0} f_n q^{n^2} z^n, 
\end{align} 
for some $q \in \mathbb{C}$ such that $|q| \leq 1$ and where the ordinary 
generating function defined above is analytic on some non-trivial open disk, 
$R_0(f)$. We note that the definition of the unilateral square series 
generating function defined by \eqref{eqn_Unilateral_Fsq_SqSeries_GF} 
corresponds to taking the indexing set of $\mathbb{S} \equiv \mathbb{N}$ in the 
first definition of \eqref{eqn_Fsq_SqSeries_first_series_def-intro_v0.1}. 
Most of the examples of bilateral series expansions where 
$\mathbb{S} \equiv \mathbb{Z}$ we will encounter as applications 
in the article are reduced to the sum of two unilateral square series 
generating functions of the form in \eqref{eqn_Unilateral_Fsq_SqSeries_GF}. 
\end{definition} 

\subsection{Approach} 
\label{subSection_Intro_Approach} 

We employ a new alternate generating--function--based approach to the 
unilateral square series expansions in 
\eqref{eqn_Unilateral_Fsq_SqSeries_GF} within this article which lead to new 
integral transforms that generate these power series expansions. 
The approach to expanding the forms of the unilateral square series in the 
article mostly follows from results that are rigorously 
justified as operations on formal power series. 
As a result, these transformations provide new approaches to 
these series and other insights that are not, by necessity, 
directly related to the underlying analysis or 
combinatorial interpretations of these special function series. 

The particular applications of the new results and integral representations 
proved within the article typically fall into one of the 
following three primary generalized classes of 
the starting sequences, $\langle f_n \rangle$, 
in the definition of 
\eqref{eqn_Unilateral_Fsq_SqSeries_GF} given above: 
\begin{itemize} 
     \newcommand{\itemmarker}{\tiny{$\blacktriangleright$}} 
     \newcommand{\itemlabel}[1]{\textbf{#1}:} 
     \newcommand{\itemII}[1]{\item[\itemmarker] \itemlabel{#1}} 

\itemII{Geometric-Series-Based Sequence Functions (Section \ref{Section_Applications_of_GeomSqSeries})} 
Variations of the so-termed \emph{geometric square series functions} 
over sequences of the form $f_n \equiv p(n) \cdot c^n$ 
with $c \in \mathbb{C}$ and a fixed 
$p(n) \in \mathbb{C} \lbrack n \rbrack$, 
including the special cases where $p(n) := 1, (an+b), (an+b)^{m}$ 
for some fixed constants, $a, b \in \mathbb{C}$, and 
$m \in \mathbb{Z}^{+}$; 

\itemII{Exponential-Series-Based Generating Functions (Section \ref{Section_AppsOf_ExpSqSeries})} 
The \emph{exponential square series functions} 
involving polynomial multiples of the 
exponential-series-based sequences, 
$f_n \equiv r^n / n!$, for a fixed $r \in \mathbb{C}$; and 

\itemII{Fourier-Type Sequences and Generating Functions (Section \ref{Section_ExpansionsOf_Fourier-TypeSqSeries})} 
\emph{Fourier-type square series functions} which involve the sequences 
$f_n \equiv \scFn(\alpha n+\beta) \cdot c^n$ 
with $\alpha, \beta \in \mathbb{R}$, 
for some $c \in \mathbb{C}$, and 
where the trigonometric function $\scFn \in \{\sin, \cos$\}. 
\end{itemize} 
The special expansions of the square series functions defined in 
Table \ref{table_Notation_SeriesDefsOfSpecialSqSeries-stmts_v1} 
summarize the generalized forms of the applications cited in 
Section \ref{subSection_Intro_Examples} below and in the application 
sections \ref{Section_Applications_of_GeomSqSeries} -- 
\ref{Section_ExpansionsOf_Fourier-TypeSqSeries} of the article. 

\subsection{Terminology} 

We say that a function with a series expansion of the form in  
\eqref{eqn_Fsq_SqSeries_first_series_def-intro_v0.1} or in 
\eqref{eqn_Unilateral_Fsq_SqSeries_GF} 
has a so-termed \emph{square series} expansion of the 
form studied by the article. 
Within the context of this article, the terms 
``\emph{square series}'' and ``\emph{square series integral},'' 
or ``\emph{square series integral representation}'', 
refer to the specific constructions derived from the 
particular generating function transformation results 
established by this article. 
The usage of these terms is then applied interchangeably in the context of 
any number of applications to the many existing well-known 
classical identities, special theta functions, and 
other power series expansions of special functions, 
which are then studied through the new forms of the 
square series transformations and 
integral representations proved within the article. 

\begin{table}[h] 

\begin{center} 

\setlength{\fboxrule}{1.25pt} 
\setlength{\fboxsep}{4pt} 
\fbox{\begin{minipage}{\textwidth} 

     \newcommand{\SqSeriesFnTableSubHeader}[1]{
          \taburowcolors 2{Plum!32!White .. Plum!10!White} 
          \hline 
          \multicolumn4{|l|}{\bfseries\normalsize{#1}} \\ \hline\hline 
          \rowfont{\bfseries\small} Eq. & Function & Series & Parameters \\ \hline
          \taburowcolors 2{Plum!0!White .. Plum!0!White} 
     } 
     \newcommand{\AddTableHeader}[1]{\SqSeriesFnTableSubHeader{#1}} 

     \tabulinestyle{2pt OTblBC} 
     \taburulecolor |OTblBC|{ITblBC} 
     \arrayrulewidth=1.5pt \doublerulesep=0pt \tabulinesep=3pt
     \begin{longtabu} to \TblW {|c|c|T{+0}{l}|X[l]|} 
        \AddTableHeader{Geometric Square Series Functions} 

        \LabelTableEqn{eqn_Ordinary_and_ExpGeomSqSeries_defs_stmts_v2} & 
           $G_{\sq}(q, c, z)$ & 
           $\sum_{n=0}^{\infty} q^{n^2} c^n z^n$ & 
           $|cz| < 1$ \\ 
        \hline 
       
        \LabelTableEqn{eqn_Theta_cqz_GeomSeries-Based_series_stmt} & 
           $\vartheta_d(q, c, z)$ & 
           $\sum_{n=d}^{\infty} q^{n^2} c^n z^n$ & 
           $|cz| < 1; d \in \mathbb{Z}^{+}$ \\ 
        \hline 

       \LabelTableEqn{eqn_GeomSqSeries_Qabqcz-intro_series_exp_v1} & 
           $Q_{a,b}(q, c, z)$ & 
           $\sum_{n=0}^{\infty} q^{n^2} (an+b) c^n z^n$ & 
           $|cz| < 1; a,b \in \mathbb{C}$ \\ 
       \hline 

       \LabelTableEqn{eqn_Theta_dm_cqz_ShiftedPolyPowers_GeomSeries-Based_series_stmt_v1} & 
           $\vartheta_{d,m}(q, c, z)$ & 
           $\sum_{n=d}^{\infty} n^{m} \times q^{n^2} c^n z^n$ & 
           $|cz| < 1; d,m \in \mathbb{N}$ \\ 
       \hline 

       \LabelTableEqn{eqn_Theta_dm_cqz_ShiftedPolyMultiples_GeomSeries-Based_series_stmt_v2} & 
           $\vartheta_{d,m}(\alpha, \beta; q, c, z)$ & 
           $\sum_{n=d}^{\infty} \left(\alpha n+\beta\right)^{m} 
            \times q^{n^2} c^n z^n$ & 
           ${|cz| < 1}$; ${\alpha,\beta \in \mathbb{C}}$; 
           ${d,m \in \mathbb{N}}$ \\ 
       \hline 

       \LabelTableEqn{eqn_GeomSqSeries_SpFnVariants_GsqpmqcFn_series_def_v1} & 
           $G_{\sq}(p, m; q, c)$ & 
           $\sum_{n=0}^{\infty} \left(q^{p}\right)^{n^2} 
            \left(c \cdot q^{m}\right)^{n}$ & 
           $|q^{m} cz| < 1$; 
           $p,m \in \mathbb{R}^{+}$ \\ 
       \hline 
     
       \AddTableHeader{Fourier--Type Square Series Functions} 

       \LabelTableEqn{eqn_FscAlphaBetauqz_FourierSquareSeries-introc_def_v3} & 
           $F_{\scFn}(\alpha, \beta; q, c, z)$ & 
           $\sum_{n=0}^{\infty} q^{n^2} 
            \scFn\left(\alpha n+\beta\right) c^n z^n$ & 
           ${|cz| < 1}$ \\ 
       \hline 

       \LabelTableEqn{eqn_JacobiTheta_fn_T.1} & 
           $\vartheta_1(u, q, z)$ & 
           $\scriptstyle{2 q^{1/4} \times \sum_{n=0}^{\infty} 
            q^{n^2} (-1)^{n} \sin\left((2n+1) u\right) q^{n} z^n}$ & 
           $|q^{} z| < 1$; 
           $u \in \mathbb{R}$ \\ 
       \hline
 
       \LabelTableEqn{eqn_JacobiTheta_fn_T.2} & 
           $\vartheta_2(u, q, z)$ & 
           $\scriptstyle{2 q^{1/4} \times \sum_{n=0}^{\infty} 
            q^{n^2} \cos\left((2n+1) u\right) q^{n} z^n}$ & 
           $|q^{} z| < 1$; 
           $u \in \mathbb{R}$ \\ 
       \hline 

       \LabelTableEqn{eqn_JacobiTheta_fn_T.3} & 
           $\vartheta_3(u, q, z)$ & 
           $\scriptstyle{1 + 2q \times \sum_{n=0}^{\infty} 
            q^{n^2} \cos\left((2n+2) u\right) q^{2n} z^{n}}$ & 
           $|q^{2} z| < 1$; 
           $u \in \mathbb{R}$ \\ 
       \hline 

       \LabelTableEqn{eqn_JacobiTheta_fn_T.4} & 
           $\vartheta_4(u, q, z)$ & 
           $\scriptstyle{1 - 2q \times \sum_{n=0}^{\infty} 
            q^{n^2} (-1)^{n} \cos\left((2n+2) u\right) q^{2n} z^{n}}$ & 
           $|q^{2} z| < 1$; 
           $u \in \mathbb{R}$ \\ 
       \hline 

       \AddTableHeader{Exponential Square Series Functions} 

       \LabelTableEqn{eqn_Esq_qrz_series_def_v1} & 
           $E_{\sq}(q, r, z)$ & 
           $\sum_{n=0}^{\infty} q^{n^2} r^n z^n / n!$ & 
           $r,z \in \mathbb{C}$ \\ 
       \hline 

       \LabelTableEqn{eqn_ETildesq_qBinomPow_rz_int_rep_series_def_v1} & 
           $\widetilde{E}_{\sq}(q, r, z)$ & 
           $\sum_{n=0}^{\infty} q^{\binom{n}{2}} r^n z^n / n!$ & 
           $r,z \in \mathbb{C}$ \\ 
       \hline 

    \end{longtabu}

     \setcaption{table}{Special Classes of Generalized Square Series Functions}{ 
                   Definitions of the generalized series expansions for the 
                   forms of several parameterized classes of 
                   special square series functions considered by the article.} 
     \label{table_Notation_SeriesDefsOfSpecialSqSeries-stmts_v1} 

\end{minipage}}
\end{center} 
\end{table} 

\subsection{Examples and Applications} 
\label{subSection_Intro_Examples} 

We first briefly motivate the utility to the sequence--based 
generating function approach implicit to the square series expansions 
suggested by the definition of 
\eqref{eqn_Unilateral_Fsq_SqSeries_GF} 
through several examples of the new integral representations 
following as consequences of the new results established in 
Section \ref{Section_SquareSeriesGF_transforms}. 
Sections \ref{Section_Applications_of_GeomSqSeries} -- 
\ref{Section_ExpansionsOf_Fourier-TypeSqSeries} of the article
also consider integral representations for generalized forms of the 
three primary classes of square series expansions listed in 
Section \ref{subSection_Intro_Approach}, 
as well as other concrete examples of concrete special case applications 
of our new results. 
This subsection is placed in the introduction both to 
demonstrate the stylistic natures of the new square series integral 
representations we prove in later sections of the article and to 
motivate Theorem \ref{thm_SqSeries_OGF_Transforms} by 
demonstrating the breadth of applications to special functions offered by 
our new results. 

\subsubsection*{Series for the Infinite $q$-Pochhammer Symbol} 

The next results illustrate several different series forms 
for \keywordemph{Euler's function}, $f(q) \equiv \QP{q}$, 
given by the respective bilateral and unilateral series for the 
infinite product expansions given by 
\citep[\S 14.4]{APOSTOLANUMT} \citep[\S 27.14(ii)]{NISTHB} 
\begin{align} 
\label{eqn_qSeries_InfProduct_to_Series_stmt} 
(q)_{\infty} & = \prod_{n=1}^{\infty} \left(1 - q^n\right) = 
     1 -q - q^2 + q^5 + q^7 - q^{12} - q^{15} + \cdots \\ 
\notag 
   & = 
     \sum_{n=-\infty}^{\infty} (-1)^{n} q^{n(3n+1) / 2} = 
     1 + \sum_{n=1}^{\infty} (-1)^n \left(q^{\omega(n)} + 
     q^{\omega(-n)}\right), 
\end{align} 
for $|q| < 1$ and 
where the series exponents, $\omega(\pm k) = (3 k^2 \mp k) / 2$, 
in the last equation denote the 
\keywordemph{pentagonal numbers} \citep[A000326]{OEIS}. 
The cube powers of this product are expanded by 
\keywordemph{Jacobi's identity} as the following series whenever $|q| < 1$ 
\citep[Thm. 1.3.9]{NUMTSPRAM} 
\citep[Thm. 357; \S 19]{HARDYWRIGHTNUMT} 
\citep[\cf Entry 22]{BERNDTRAMNBIII} 
\citep[\cf \Section 17.2(i)]{NISTHB}: 
\begin{align} 
\label{eqn_qSeries_qqinf_Pow3} 
\QP{q}^{3} & = \prod_{n=1}^{\infty} (1-q^n)^3 = 
     1 + \sum_{m=1}^{\infty} (-1)^m\ (2m+1)\ q^{m(m+1) / 2}. 
\end{align} 
For a sufficiently small real--valued $q$ or 
strictly complex-valued $q$ such that $\Im(q) > 0$, each with 
$|q| \in (0, 1]$, these products satisfy the next series expansions 
formed by the special cases of 
\eqref{eqn_Unilateral_Fsq_SqSeries_GF} given by 
\citep[\S 23.17(iii), \S 27.14(ii)--(iv)]{NISTHB} 
\citep[\S 19.9]{HARDYWRIGHTNUMT} 
\begin{subequations} 
\begin{align}
\label{eqn_PartitionFn_pn_OGF_series_and_reciprocal_IntRep-stmts_v0} 
\QP{q} & = 
     1 - q \times \sum_{n=0}^{\infty} (-1)^{n} q^{n(3n+5)/2} + 
     \sum_{n=1}^{\infty} (-1)^{n} q^{n(3n+1) / 2} \\ 
\notag 
   & = 
     1 - q \times F_{\sq}\left(f_{11}; q^{3/2}, -1\right) - 
     q^{2} \times F_{\sq}\left(f_{12}; q^{7/2}, -1\right) \\ 
\notag 
     & = 
     1 - \int_0^{\infty} \frac{2 e^{-t^2/2}}{\sqrt{2\pi}} \left[ 
     \frac{q \left(1 + q^{5/2} \cosh\left(\sqrt{3 \Log(q)} t\right) 
     \right)}{q^5 + 2 q^{5/2} \cosh\left(\sqrt{3 \Log(q)} t\right) + 1}
     \right] dt \\ 
\notag 
   & \phantom{= 1\ } - 
     \int_0^{\infty} \frac{2 e^{-t^2/2}}{\sqrt{2\pi}} \left[ 
     \frac{q^2 \left(1 + q^{7/2} \cosh\left(\sqrt{3 \Log(q)} t\right) 
     \right)}{q^7 + 2 q^{7/2} \cosh\left(\sqrt{3 \Log(q)} t\right) + 1}
     \right] dt, |q| \in (0, 1/9) \\ 
\label{eqn_PartitionFn_pn_OGF_series_and_reciprocal_VariousIntReps_for_QPqFn} 
\QP{q}^{3} & = \frac{1}{2 q^{1/8}} 
     \vartheta_1^{\prime}\left(\sqrt{q}\right) = 
     1 + \sum_{n=1}^{\infty} q^{n(n+1)/2} (2n+1) (-1)^{n} \\ 
\notag 
   & = 
     1 - q \times F_{\sq}\left(f_2; q^{1/2}, -1\right) \\ 
\notag 
     & = 
     1 + \int_0^{\infty} \frac{e^{-t^2/2}}{\sqrt{2\pi}} \left[ 
     \frac{4 q^{5/2} \left((q^3+1) \cosh\left(\sqrt{\Log(q)} t\right) + 
     2 q^{3/2}\right)}{
     \left(q^3 + 2 q^{3/2} \cosh\left(\sqrt{\Log(q)} t\right) + 1\right)^2}
     \right] dt \\ 
\notag 
   & \phantom{= 1\ } - 
     \int_0^{\infty} \frac{e^{-t^2/2}}{\sqrt{2\pi}} \left[ 
     \frac{6q \left(1 + q^{3/2} \cosh\left(\sqrt{\Log(q)} t\right) 
     \right)}{q^3 + 2 q^{3/2} \cosh\left(\sqrt{\Log(q)} t\right) + 1}
     \right] dt,\ 
     |q| \in \left(0, 2^{-2/3}\right) \\ 
\QP{q} & = \frac{1}{3^{1/2} q^{1/24}} \times 
     \vartheta_2\left(\pi / 6, q^{1/6}\right) \\ 
\notag 
   & = 
     \frac{2 \sqrt{3}}{3} \times \sum_{n=0}^{\infty} 
     q^{n(n+1)/6} (-1)^{n} \cos\left((2n+1) \cdot \frac{\pi}{6}\right) \\ 
\notag 
   & = 
     \frac{2 \sqrt{3}}{3} \times 
     F_{\sq}\left(f_3; q^{1/6}, -1\right) \\ 
\notag 
     & = 
     \int_0^{\infty} \frac{2 e^{-t^2 / 2}}{\sqrt{2\pi}} 
     \mathsmaller{
     \frac{\sqrt{q} \cosh\left(\frac{\sqrt{\Log(q)} t}{\sqrt{3}}\right) z^3 + 
     2 q^{1/3} \cosh\left(\frac{\sqrt{\Log(q)} t}{\sqrt{3}}\right)^2 z^2 + 
     2 q^{1/6} \cosh\left(\frac{\sqrt{\Log(q)} t}{\sqrt{3}}\right) z + 1}{ 
     q^{2/3} z^4 + 2 \sqrt{q} 
     \cosh\left(\frac{\sqrt{\Log(q)} t}{\sqrt{3}}\right) z^3 + 
     q^{1/3} \left(1 + 2 \cosh\left(\frac{\sqrt{\Log(q)} t}{\sqrt{3}}\right) 
     \right) z^2 + 2 q^{1/6} 
     \cosh\left(\frac{\sqrt{\Log(q)} t}{\sqrt{3}}\right) z + 1} 
     } dt, 
\end{align} 
\end{subequations} 
when the respective sequence forms from the square series functions in the 
previous equations are defined to be 
$f_{11}(n) := q^{5n/2}$, $f_{12}(n) := q^{7n/2}$, 
$f_2(n) := (2n+3) \cdot q^{3n/2}$, and 
$f_3(n) := q^{n/6} \cdot \cos\left((2n+1) \cdot \pi / 6\right)$. 

\subsubsection*{The General Two-Variable Ramanujan Theta Function} 
\label{example_2Var_RamThetaFn_fab} 
\label{cor_2Var_RamThetaFn_fab} 

For any non-zero $a,b \in \mathbb{C}$ with $|ab| < 1$, the 
\keywordemph{general (two-variable) Ramanujan theta function}, $f(a, b)$, 
is defined by the bilateral series expansion 
\citep[\Section 16, (18.1)]{BERNDTRAMNBIII} 
\citep[\Section 20.11(ii)]{NISTHB} 
\begin{align} 
\label{eqn_RamThetaFn_fab_bilateral_series_exps_stmt} 
f(a, b) & := \sum_{n=-\infty}^{\infty} a^{n(n+1) / 2} b^{n(n-1) / 2} \\ 
\notag 
   & \phantom{:} = 
     1 + (a+b) + ab(a^2+b^2) + a^3b^3(a^3+b^3) + a^6b^6(a^4+b^4)+ \cdots, 
\end{align} 
where the second infinite expansion of the series for $f(a,b)$ 
corresponds to the leading powers of $ab$ taken over the 
\keywordemph{triangular numbers}, $T_n \equiv n(n-1)/2$, 
\citep[A000217]{OEIS} \citep[\S XII]{RAMANUJAN}. 
The general Ramanujan theta function 
is also expanded by the pair of unilateral 
geometric square series functions from 
Table \ref{table_Notation_SeriesDefsOfSpecialSqSeries-stmts_v1} as 
\begin{align} 
\label{eqn_RamThetaFn_fab_unilateral_series_exp_stmts-series_v2} 
f(a, b) & = 1 + \sum_{n=1}^{\infty} \left[a^{n(n+1) / 2} b^{n(n-1) / 2} + 
     a^{n(n-1) / 2} b^{n(n+1) / 2}\right] \\ 
\notag 
   & = 
     1 + \vartheta_1\left(\sqrt{ab}, \sqrt{a b^{-1}}, 1\right) + 
     \vartheta_1\left(\sqrt{ab}, \sqrt{b a^{-1}}, 1\right) \\ 
\label{eqn_RamThetaFn_fab_unilateral_series_exp_stmts-series_v3} 
f(a, b) & = 
     1 + \sum_{n=0}^{\infty} \left[ 
     a \times a^{n(n+3)/2} b^{n(n+1)/2} + 
     b \times a^{n(n+1)/2} b^{n(n+3)/2} 
     \right] \\ 
\notag 
   & = 
     1 + 
     a \times G_{\sq}\left(\sqrt{ab}, a \sqrt{ab}, 1\right) + 
     b \times G_{\sq}\left(\sqrt{ab}, b \sqrt{ab}, 1\right). 
\end{align} 
These symmetric forms of the unilateral series 
satisfy the well known property for the function that 
$f(a, b) \equiv f(b, a)$ 
\citep[\Section 1]{NUMTSPRAM} \citep[\cf \Section 5]{RRIDENTLIST} and 
lead to the new integral representations in the next formula given by 
\begin{align} 
\label{eqn_RamThetaFn_fab_unilateral_series_exp_stmts-int_rep_formula_v5} 
f(a,b) & = 1 + \int_0^{\infty} \frac{2a e^{-t^2/2}}{\sqrt{2\pi}}\left[ 
     \frac{1 - a \sqrt{ab} \cosh\left(\sqrt{\Log(ab)} t\right)}{ 
     a^3 b - 2a \sqrt{ab} \cosh\left(\sqrt{\Log(ab)} t\right) + 1} 
     \right] dt \\ 
\notag 
   & \phantom{=\ 1 } + 
     \int_0^{\infty} \frac{2b e^{-t^2/2}}{\sqrt{2\pi}}\left[ 
     \frac{1 - b \sqrt{ab} \cosh\left(\sqrt{\Log(ab)} t\right)}{ 
     a b^3 - 2b \sqrt{ab} \cosh\left(\sqrt{\Log(ab)} t\right) + 1} 
     \right] dt. 
\end{align} 
The full two-parameter form of the Ramanujan theta function, $f(a, b)$, 
defines the expansions for many other special function series. 
The particular notable forms of these modified theta function series 
include the \emph{one--variable}, or \emph{single--argument}, 
form of the function, $f(q) := f(-q, -q^2) \equiv \QP{q}$, 
which is related to the 
\keywordemph{Dedekind eta function}, 
$\eta(\tau) \equiv q^{1/12} \QP{\bar{q}}$ when 
$\bar{q} \equiv \exp\left(2\pi\imath \tau\right)$ 
is the square of the \keywordemph{nome} $q$, and the 
special cases of \keywordemph{Ramanujan's functions}, 
$\varphi(q) \equiv f(q, q)$ and $\psi(q) \equiv f(q, q^3)$, 
considered in Section \ref{subsubSection_Apps_SpFnsRelated_to_GeomSqSeries_RamPsiPhiFns} 
\citep[\S 20.11(ii); \S 27.14]{NISTHB}. 
\citep[\Section XII]{RAMANUJAN}. 
The definition of the 
general two-variable form of the theta function, $f(a, b)$, 
is also given in terms of the classical \emph{Jacobi theta function}, 
$\vartheta_3(z, q) \equiv \vartheta_3(z\vert\tau)$, 
where $a := q e^{2 \imath z}$, $b := q e^{-2 \imath z}$, and 
$q \equiv e^{\imath\pi\tau}$ 
\citep[\S 20.11(ii)]{NISTHB}, and alternately as 
$\vartheta_3(v, \tau)$ over the inputs of 
$v := \log\left(\frac{a}{b}\right) \times \left(4\pi\imath\right)^{-1}$ and 
$\tau := \log\left(ab\right) \times \left(2\pi\imath\right)^{-1}$ 
in \citep[\S XII]{RAMANUJAN}. 

\subsubsection*{New Integral Formulas for Explicit Values of Special Functions} 

Similar series related to the Jacobi theta functions, 
$\vartheta_i(0, q)$, 
lead to further new, exact integral formulas for certain 
explicit special constant values involving the 
\keywordemph{gamma function}, $\Gamma(z)$, at the rational inputs of 
$\Gamma(1/4)$, $\Gamma(1/3)$, and $\Gamma(3/4)$. 
Specific examples of the new integral formulas derived from the 
results proved within this article include the following 
special cases of the results for Ramanujan's functions given in 
\sref{Section_Applications_of_GeomSqSeries}: 
\begin{align} 
\notag 
\varphi\left(e^{-5\pi}\right) & \equiv 
\frac{\pi^{1/4}}{\Gamma\left(\frac{3}{4}\right)} \cdot 
     \frac{\sqrt{5 + 2 \sqrt{5}}}{5^{3/4}} \\ 
\notag 
   & = 
     1 + \int_0^{\infty} \frac{e^{-t^2/2}}{\sqrt{2\pi}} \left[ 
     \frac{4 e^{5\pi} \left(e^{10\pi} - \cos\left(\sqrt{10 \pi} t\right) 
     \right)}{e^{20\pi} - 2 e^{10\pi} \cos\left(\sqrt{10 \pi} t\right) + 1} 
     \right] dt \\ 
\notag 
\psi\left(e^{-\pi / 2}\right) & \equiv 
     \frac{\pi^{1/4}}{\Gamma\left(\frac{3}{4}\right)} \cdot 
     \frac{\left(\sqrt{2} + 1\right)^{1/4} e^{\pi / 16}}{2^{7/16}} \\ 
\notag 
   & = 
     \int_0^{\infty} \frac{e^{-t^2/2}}{\sqrt{2\pi}} \left[ 
     \frac{\cos\left(\sqrt{\frac{\pi}{2}} t\right) - e^{\pi / 4}}{ 
     \cos\left(\sqrt{\frac{\pi}{2}} t\right) - \cosh\left(\frac{\pi}{4}\right)} 
     \right] dt. 
\end{align} 
When $q := \pm \exp\left(-k\pi\right)$ for some real-valued $k > 0$, the 
forms of the Ramanujan theta functions, $\varphi(q)$ and $\psi(q)$, 
define the entire classes of special constants studied in 
\citep[\cf \S 5]{THETAFN-IDENTS-EXPLICIT-FORMULAS} 
\citep[\cf \S 5--6]{RAMTHETAFNS-EXPLICIT-VALUES}. 
We also see that similar expansions can be given for constant values of the 
\keywordemph{Dedekind eta function}, $\eta(\tau)$, through it's 
relations to the Jacobi theta functions and to the q-Pochhammer symbol 
\citep[\S 23.15(ii), \S 23.17(i), \S 27.14(iv)]{NISTHB}. 

\subsubsection*{Exponential Generating Functions} 

The examples cited so far in the introduction have involved 
special cases of the geometric square series expansions, or the 
sequences over Fourier-type square series, which are readily 
expanded through the new integral representation formulas for these 
geometric-series-based sequence results. 
Before concluding this subsection, we suggest additional, 
characteristically distinct applications 
that are derived from exponential-series-based sequences and 
generating functions. 
The resulting integral representations derived from 
Theorem \ref{thm_SqSeries_OGF_Transforms} 
for these exponential square series types represent a 
much different stylistic nature 
for these expansions than for the analogous non--exponential square series 
generating functions demonstrated so far in the examples above 
(\cf the characteristic expansions cited in 
Section \ref{subSection_EGFs_AComparison}). 

In particular, 
we consider the particular applications of two--variable sequence 
generating functions over the \emph{binomial powers} of a 
fixed series parameter, $q^{\binom{n}{2}} \equiv q^{n(n-1)/2}$, 
defined by the series 
\begin{align} 
\label{eqn_Eqsqz_series_exp_and_IntRep-intro_example_v1} 
\widetilde{E}_{\sq}(q, z) & := 
     \sum_{n=0}^{\infty} q^{\binom{n}{2}} \frac{z^n}{n!} = 
     F_{\sq}\left(\left\langle q^{-n/2}/n! \right\rangle; q^{1/2}, z\right) \\ 
\notag 
   & \phantom{:} = 
     \int_0^{\infty} \frac{e^{-t^2/2}}{\sqrt{2\pi}} \left[ 
     \sum_{b=\pm 1} 
     \exp\left(e^{b t \sqrt{\Log(q)}} \cdot \frac{z}{\sqrt{q}}\right) 
     \right] dt. 
\end{align} 
For $m,n \in \mathbb{N}$, let the function $\ell_m(n)$ denote the 
\keywordemph{number of labeled graphs with $m$ edges on $n$ nodes}. 
The sequence is generated as the coefficients of the power series expansion 
formed by the special case of the two--variable, double generating function in 
\eqref{eqn_Eqsqz_series_exp_and_IntRep-intro_example_v1} given by 
\citep[\S 2]{BRIGGS-ENUMLGRAPHS} 
\label{example_EGF_LabeledGraphs_gwz_ellmn} 
\begin{align} 
\label{eqn_gwz_LabeledGraph_SqSeries_EGF_result} 
\widehat{G}_{\ell}(w, z) & := 
     \sum_{n,m \geq 0} \ell_m(n) w^m \frac{z^n}{n!} = 
     \sum_{n=0}^{\infty} (1+w)^{\binom{n}{2}} \frac{z^n}{n!} \\ 
\notag 
   & \phantom{:} = 
     \int_0^{\infty} \frac{e^{-t^2/2}}{\sqrt{2\pi}} \left[ 
     e^{e^{\sqrt{\Log(1+w)} t} \frac{z}{\sqrt{1+w}}} + 
     e^{e^{-\sqrt{\Log(1+w)} t} \frac{z}{\sqrt{1+w}}} 
     \right] dt. 
\end{align} 
Other examples of series related to the definition in 
\eqref{eqn_Eqsqz_series_exp_and_IntRep-intro_example_v1} 
arise in the combinatorial interpretations and generating functions over 
sequences with applications to graph theoretic contexts. 
The results proved in \sref{subSection_EGFs_and_GraphTheory_Examples} 
provide further applications of the new integral representations for these 
variants of the exponential square series. 

\subsubsection*{Other Applications of the New Results in the Article} 

There are a number of other applications of the new results and 
square series integral representations proved within the article. 
The next few identities provide additional examples and 
specific applications of these new results. 
A pair of 
$q$-series expansions related to \emph{Zagier's identities} and the 
second of the infinite products defined in 
\eqref{eqn_qSeries_InfProduct_to_Series_stmt} 
are stated by the next equations for $|q| < 1$ 
\citep[\S 3; Thm. 1]{CHAPMAN-IDENTOFZAGIER}. 
\begin{align} 
\label{eqn_ZagiersIdentity_first_series_exp_v1} 
\sum_{n=0}^{\infty} \QPochhammer{z}{q}{n+1} z^n & = 1 + \sum_{n=1}^{\infty} 
     (-1)^n \left[q^{n(3n-1) / 2} z^{3n-1} + q^{n(3n+1) / 2} z^{3n}\right] \\ 
\notag 
     & = 
     1 - q z^2 \int_0^\infty \frac{2 e^{-t^2 / 2}}{\sqrt{2\pi}} 
     \frac{\left(1 + q^{5/2} \cosh\left(\sqrt{3 \Log(q)} t\right) \right)}{ 
     \left(q^5 z^6 + 2 q^{5/2} z^3 
     \cosh\left(\sqrt{3 \Log(q)} t\right) + 1\right)} dt \\ 
\notag 
     & \phantom{= 1\ } - 
     q^2 z^3 \int_0^\infty \frac{2 e^{-t^2 / 2}}{\sqrt{2\pi}} 
     \frac{\left(1 + q^{7/2} \cosh\left(\sqrt{3 \Log(q)} t\right) \right)}{ 
     \left(q^7 z^6 + 2 q^{7/2} z^3 
     \cosh\left(\sqrt{3 \Log(q)} t\right) + 1\right)} dt \\ 
\notag 
\sum_{n=0}^{\infty} \left[(q)_{\infty} - (q)_n\right] & = 
     (q)_{\infty} \sum_{n=1}^{\infty} \frac{q^n}{1 - q^n} + 
     \sum_{n=1}^{\infty} (-1)^{n} \left[(3n-1) q^{n(3n-1) / 2} + 
     (3n) q^{n(3n+1) / 2}\right] \\ 
\notag 
     & = 
     (q)_{\infty} \sum_{n=1}^{\infty} \frac{q^n}{1 - q^n} \\ 
\notag 
     & \phantom{=\qquad\ } - 
     \int_0^{\infty} \frac{e^{-t^2 / 2}}{\sqrt{2\pi}} 
     \frac{4q \left(1 + q^{5/2} \cosh\left(\sqrt{3 \Log(q)} t\right)\right)}{ 
     \left(q^5 + 2 q^{5/2} \cosh\left(\sqrt{3 \Log(q)} t\right) + 1 
     \right)} dt \\ 
\notag 
     & \phantom{=\qquad\ } + 
     \int_0^{\infty} \frac{6 q^{7/2} e^{-t^2 / 2}}{\sqrt{2\pi}} 
     \frac{\left(2 q^{5/2} + (1 + q^5)\cosh\left(\sqrt{3 \Log(q)} t\right) 
     \right)}{\left(q^5 + 2 q^{5/2} \cosh\left(\sqrt{3 \Log(q)} t\right) + 1 
     \right)^2} dt \\ 
\notag 
     & \phantom{=\qquad\ } - 
     \int_0^{\infty} \frac{e^{-t^2 / 2}}{\sqrt{2\pi}} 
     \frac{6 q^2 \left(1 + q^{7/2} \cosh\left(\sqrt{3 \Log(q)} t\right)\right)}{ 
     \left(q^7 + 2 q^{7/2} \cosh\left(\sqrt{3 \Log(q)} t\right) + 1 
     \right)} dt \\ 
\notag 
     & \phantom{=\qquad\ } + 
     \int_0^{\infty} \frac{6 q^{11/2} e^{-t^2 / 2}}{\sqrt{2\pi}} 
     \frac{\left(2 q^{7/2} + (1 + q^7)\cosh\left(\sqrt{3 \Log(q)} t\right) 
     \right)}{\left(q^7 + 2 q^{7/2} \cosh\left(\sqrt{3 \Log(q)} t\right) + 1 
     \right)^2} dt 
\end{align} 
Other identities connecting sums involving 
products of terms in $q$-series are found in a number of 
series from Ramanujan's ``lost'' notebook, including many 
expansions that are adapted easily from the mock theta function series 
from Watson's article \citep{WATSONMOCKTHETA,INTRORAMLOSTNB}
\citep[\cf \Section 3 and \Section 4]{BERNDTCOMBIDENTS}. 
Many similar and related identities are also compared in the references 
\citep{ANDREWSQ-SERIES,BERNDTRAMNBIII,ANDREWS2THMS,RRIDENTLIST}. 

Another example utilizing these square series expansions 
provides a new integral representation for an asymptotic formula for the 
\emph{$n^{th}$ prime number}, $p_n$, or alternately, 
a new functional equation for the ordinary generating function of the primes. 
In particular, if we let the constant $K$ be defined by the infinite series 
\begin{equation*}
K = \sum_{k \geq 0} 10^{-k^2} p_k \approx 0.200300005, 
\end{equation*} 
we have the following identities for $p_n$ in terms of its 
ordinary generating function, $\widetilde{P}(z)$, defined such that 
$\widetilde{P}(0) = 0$ whenever $|z| < 1 / 2$ \citep[Ex. 4.21, p. 516]{GKP}: 
\begin{align*} 
p_n  & = \left\lfloor 10^{n^2} K\ \bmod{10^n} \right\rfloor \\ 
     & = 
     \mathsmaller{ 
     \left\lfloor [z^n] 
     \int_0^{\infty} \frac{e^{-t^2 / 2}}{\sqrt{2\pi}} 
     \frac{1 - z \cosh\left(\sqrt{\Log(100)} t\right) dt}{ 
     z^2 - 2 z \cosh\left(\sqrt{\Log(100)} t\right) + 1} 
     \int_0^{\infty} \frac{e^{-s^2/2}}{\sqrt{2\pi}} 
     \mathlarger{\sum_{b = \pm 1}} \widetilde{P}\left(
     \frac{1}{1 - e^{b\imath \sqrt{\Log(100)} t}} 
     \right) ds \bmod{10^n} 
     \right\rfloor. 
     } 
\end{align*} 

\subsubsection*{Unilateral Series Expansions of Bilateral Square Series} 

For fixed parameters $a, b, r_0, r_1, r_2 \in \mathbb{Q}$ and 
$q \in \mathbb{C}$ such that $|q| < 1$, 
let the function, $B_{a,b}\left(r_2, r_1, r_0; q\right)$, be defined by the 
next form of the bilateral series in 
\eqref{eqn_ParamThetaFn_Trsabcq_BilateralSeries_def_stmt-intro_v1}. 
The first series for this function 
is then expanded in the form of a second unilateral square series as 
\begin{align} 
\label{eqn_ParamThetaFn_Trsabcq_BilateralSeries_def_stmt-intro_v1} 
 & B_{a,b}\left(r_2, r_1, r_0; q\right) := \sum_{n=-\infty}^{\infty} 
     (-1)^{n} (an+b) q^{\left(r_2 n^2 + r_1 n + r_0\right) / 2} \\ 
\notag 
   & \phantom{\qquad\qquad} = 
     q^{r_0/2} \left[ 
     \sum_{n=0}^{\infty} (-1)^{n} (an+b) q^{n(r_2 n + r_1) / 2} - 
     \sum_{n=1}^{\infty} (-1)^{n} (an-b) q^{n(r_2 n - r_1) / 2} 
     \right]. 
\end{align} 
An application of 
Proposition \ref{prop_Qsq_abcqz_unilateral_series_fn_integral_rep_v1} 
requires that the second series term in the previous equation be 
shifted to obtain the following unilateral series expansion 
(see Section \ref{Section_Applications_of_GeomSqSeries}): 
\begin{align} 
\label{eqn_ParamThetaFn_Trsabcq_BilateralSeries_def_stmt-intro_v3} 
B_{a,b}\left(r_2, r_1, r_0; q\right) & = 
     q^{r_0/2} \times 
     \sum_{n=0}^{\infty} (-1)^{n} (an+b) q^{n(r_2 n + r_1) / 2} \\ 
\notag 
   & \phantom{=} + 
     q^{(r_2-r_1+r_0)/2} \times 
     \sum_{n=0}^{\infty} (-1)^{n} (an+a-b) q^{n(r_2 n + 2r_2 - r_1) / 2}. 
\end{align} 
Particular special cases of this series 
include the expansions corresponding to the tuples 
$T \equiv (a,b,r_2,r_1,r_0)$ in the series forms of 
\eqref{eqn_ParamThetaFn_Trsabcq_BilateralSeries_def_stmt-intro_v1} and 
\eqref{eqn_ParamThetaFn_Trsabcq_BilateralSeries_def_stmt-intro_v3} 
for the following functions: the series for the 
\emph{Dedekind eta function}, $\eta(\tau)$, 
expanded in the bilateral series form of 
$T = (0,1,3,1,0)$ 
\citep[\S 23.17]{NISTHB}, 
a pair of bilateral square series related to the 
\emph{Rogers--Ramanujan continued fraction}, $R(q)$, expanded by the 
series where $T = (10,3,5,3,0)$ and $T = (10,1,5,1,0)$ 
\citep[\cf \Section 5]{INTRORAMLOSTNB} 
\citep[\cf \Section 16]{BERNDTRAMNBIII}, the 
parameters $T = (0, 1, 25, -15, 0)$ for the coefficients of the 
$q$--series $(q)_{\infty}^3$ modulo $5$, and the 
cases where $T = (0, 1, 3, \pm 1, 0)$ related to the 
\textit{Euler function}, $\phi(q) \equiv (q)_{\infty}$, and the 
\textit{pentagonal number theorem} 
\citep[Thm. 353]{HARDYWRIGHTNUMT} \citep[Cor. 1.3.5]{NUMTSPRAM} 
\citep[\cf Entry 22(iii)]{BERNDTRAMNBIII}. 
Even further examples of the series expansions phrased in terms of 
\eqref{eqn_ParamThetaFn_Trsabcq_BilateralSeries_def_stmt-intro_v1} 
are compared in the references 
\citep[\cf Cors. 1.3.21 and 1.3.22]{NUMTSPRAM} 
\citep[\cf Thms. 355 and 356; (19.9.3)]{HARDYWRIGHTNUMT} 
\citep[\cf \Section 17.2(vi); (17.8.3)]{NISTHB} 
\citep{BERNDTRAMNBIII,RRIDENTLIST}. 

\section{Statement and Constructions of the Main Theorem} 
\label{subSection_Intro_SeriesTransforms_Involving_S2Nums} 
\label{subsubSection_Intro_Sketch_SqSeriesExps_Construction} 
\label{subsubSection_Intro_Stmts_OfThe_MainThmResults} 
\label{Section_SquareSeriesGF_transforms} 
\label{Section_SquareSeriesGF_transforms-v0} 

The \textit{Stirling numbers of the second kind}\footnote{ 
     The triangle is also commonly denoted by $S(n, k)$ as in 
     \citep[\S 26.8]{NISTHB}. The 
     bracket notation, $\gkpSII{n}{k}$, for these coefficients is 
     consistent with the 
     notation employed by the reference \citep[\cf \S 6]{GKP}. 
}, 
$\gkpSII{n}{k}$, are related to a specific key transformation result of the 
OGF, $F(z)$, of an arbitrary sequence stated in the next proposition. 
In particular, the triangular recurrence relation defining these numbers 
gives rise to the finite expression in 
\eqref{eqn_nkpow_Fz_S2OGF_exp_ident} below 
for the generating function of the 
modified sequence, $\langle n^k f_n \rangle$, when $k \in \mathbb{Z}^{+}$ 
given only in terms of a sum over the Stirling numbers and the higher-order 
$j^{th}$ derivatives of the original sequence generating function when these 
derivatives exist for all $j \leq k$. 
The next result is an important ingredient to the proof of 
Theorem \ref{thm_SqSeries_OGF_Transforms} 
given in this section. 

\begin{prop}[Generating Function Transformations Involving Stirling Numbers]
\label{prop_S2_OGFFz_to_nPowmOGFFz_stmt} 
For any fixed $k \in \mathbb{Z}^{+}$, let the ordinary generating function, 
$F_f(z)$, of an arbitrary sequence, $\langle f_n \rangle$, be defined 
such that its $j^{th}$-order derivatives, $F^{(j)}(z)$, 
exist for all $j \leq k$. Then for this fixed $k \in \mathbb{Z}^{+}$, 
we have a finite expression for the OGF of the modified sequence, 
$\langle n^m f_n \rangle$, given by the following transformation identity: 
\begin{equation} 
\label{eqn_nkpow_Fz_S2OGF_exp_ident} 
\sum_{n=0}^{\infty} n^k f_n z^n = \sum_{j=0}^{k} \gkpSII{k}{j} z^j F^{(j)}(z). 
\end{equation} 
\end{prop}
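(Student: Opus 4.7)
The plan is to reduce the identity to the classical expansion of the monomial $n^{k}$ in terms of falling factorials with coefficients given by the Stirling numbers of the second kind. Concretely, I would invoke the standard identity
\begin{equation*}
n^{k} \;=\; \sum_{j=0}^{k} \gkpSII{k}{j}\, n^{\underline{j}},
\qquad n^{\underline{j}} := n(n-1)(n-2)\cdots(n-j+1),
\end{equation*}
which is the defining expansion of $n^k$ in the falling factorial basis and which follows by induction from the triangular recurrence for $\gkpSII{k}{j}$.

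Next I would observe the elementary differentiation identity for monomials, namely
\begin{equation*}
\frac{d^{j}}{dz^{j}}\, z^{n} \;=\; n^{\underline{j}}\, z^{n-j},
\qquad\text{so that}\qquad
z^{j}\, \frac{d^{j}}{dz^{j}}\, z^{n} \;=\; n^{\underline{j}}\, z^{n}.
\end{equation*}
Multiplying by $f_n$ and summing on $n$ then yields
\begin{equation*}
\sum_{n=0}^{\infty} n^{\underline{j}} f_n z^{n}
\;=\; z^{j}\, F_f^{(j)}(z),
\end{equation*}
where termwise differentiation is justified under the hypothesis that the derivatives $F_f^{(j)}(z)$ exist (equivalently, one may regard the identity as a formal power series identity, since both sides have the same $z^{n}$-coefficient for each $n$).

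The final step is to substitute the Stirling expansion of $n^{k}$ into the target sum and interchange the two finite-or-absolute summations:
\begin{align*}
\sum_{n=0}^{\infty} n^{k} f_n z^{n}
&\;=\; \sum_{n=0}^{\infty} \left(\sum_{j=0}^{k} \gkpSII{k}{j} n^{\underline{j}}\right) f_n z^{n} \\
&\;=\; \sum_{j=0}^{k} \gkpSII{k}{j}\, \sum_{n=0}^{\infty} n^{\underline{j}} f_n z^{n}
\;=\; \sum_{j=0}^{k} \gkpSII{k}{j}\, z^{j}\, F_f^{(j)}(z),
\end{align*}
which is precisely \eqref{eqn_nkpow_Fz_S2OGF_exp_ident}. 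I do not expect any genuine obstacle here: the inner sum over $j$ is finite, so the swap is automatic, and the only mild subtlety is ensuring that the $j^{\text{th}}$-order differentiation of the series $F_f(z)$ may be performed termwise, which is exactly the hypothesis made on $F_f$. In a purely formal setting, both sides agree coefficient-by-coefficient in $z^n$, again reducing to the falling-factorial expansion of $n^k$.
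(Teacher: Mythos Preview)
Your proof is correct but follows a different route from the paper. The paper argues by induction on $k$: assuming the identity at level $k$, it applies the operator $z\,d/dz$ to both sides and then invokes the triangular recurrence $\gkpSII{k+1}{j} = j\gkpSII{k}{j} + \gkpSII{k}{j-1}$ to regroup the resulting terms and recover the identity at level $k+1$. Your argument instead takes the falling-factorial expansion $n^{k} = \sum_{j} \gkpSII{k}{j}\, n^{\underline{j}}$ as known and combines it with the elementary observation $z^{j} F_f^{(j)}(z) = \sum_{n} n^{\underline{j}} f_n z^{n}$, so the result drops out in one line after swapping a finite sum with the series. Your approach is shorter and more conceptual, since it isolates the purely combinatorial content (the change of basis from monomials to falling factorials) from the purely analytic content (termwise differentiation); the paper's inductive approach is more self-contained because it does not presuppose the falling-factorial identity but in effect re-derives it inside the generating-function framework.
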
 
\begin{proof} 
The proof follows by induction on $k \geq 0$. 
For the base case of $k = 0$, the 
right--hand--side of \eqref{eqn_nkpow_Fz_S2OGF_exp_ident} evaluates to the 
original OGF, $F(z)$, as claimed, since $\gkpSII{0}{0} = 1$. 
The Stirling numbers of the second kind are defined in 
\citep[\S 6.1]{GKP} by the triangular recurrence relation in 
\eqref{eqn_S2_rdef} for integers $n, k \geq 0$. 
\begin{equation} 
\label{eqn_S2_rdef} 
\gkpSII{n}{k} = k \gkpSII{n-1}{k} + \gkpSII{n-1}{k-1} + \Iverson{n = k = 0} 
\end{equation} 
Suppose that 
\eqref{eqn_nkpow_Fz_S2OGF_exp_ident} holds for some $k \in \mathbb{N}$. 
For this fixed $k$, let $F(z)$ denote the OGF of the sequence 
$\langle f_n \rangle$ such that the $j^{th}$ derivatives of the 
function exist for all $j \in \{0, 1, 2, \ldots, k+1\}$. 
It follows from our assumption that 
\begin{align} 
\notag 
\sum_{n=0}^{\infty} n \cdot \left(n^k f_n\right) z^n & = z \frac{d}{dz}\Biggl[ 
     \sum_{j=0}^{k} \gkpSII{k}{j} z^j F^{(j)}(z)\Biggr] \\ 
\notag 
    & = 
\sum_{j=0}^{k} \gkpSII{k}{j} \left(j z^j F^{(j)}(z) + z^{j+1} F^{(j+1)}(z) 
     \right) \\ 
\notag 
   & = 
     \sum_{j=0}^{k} j \gkpSII{k}{j} z^j F^{(j)}(z) + 
     \sum_{j=1}^{k+1} \gkpSII{k}{j-1} z^j F^{(j)}(z) \\ 
\label{eqn_S2TransformProof_nPowk_LHS_sum} 
   & = 
     \sum_{j=1}^{k+1} \left(j \gkpSII{k}{j} + \gkpSII{k}{j-1}\right) 
     z^j F^{(j)}(z) 
\end{align} 
where $\gkpSII{k}{0} \equiv 0$ for all non-negative $k \neq 0$. 
Finally, we equate the left-hand-side of 
\eqref{eqn_S2_rdef} to the inner right-hand-side terms in 
\eqref{eqn_S2TransformProof_nPowk_LHS_sum} to obtain that 
\begin{align} 
\label{eqn_S2TransformProof_nPowkp1_sum} 
\sum_{j=0}^{k+1} \gkpSII{k+1}{j} z^j F^{(j)}(z) & = 
     \sum_{n=0}^{\infty} n^{k+1} f_n z^n. 
\end{align} 
Thus the sum in \eqref{eqn_nkpow_Fz_S2OGF_exp_ident} expanded in terms of the 
Stirling number triangle holds for all $k \in \mathbb{N}$. 
\end{proof} 

\begin{remark}[Applications of the Stirling Number Transformations]
\label{remark_GenFormsOfS2GFTransforms}
The series transformations given by the result in 
\eqref{eqn_nkpow_Fz_S2OGF_exp_ident} 
effectively generalize the following known identity for the negative-order 
polylogarithm function corresponding to the 
special case sequences of $f_n \equiv c^n$ that holds whenever $|cz| < 1$ 
\citep[Table 351; \S 7.4]{GKP}: 
\begin{equation} 
\notag 
\Li_{-m}(cz) \equiv 
     \sum_{n=0}^{\infty} n^{m} (cz)^{n} = 
     \sum_{j=0}^{m} \gkpSII{m}{j} \frac{(cz)^{j} \cdot j!}{(1-cz)^{j+1}},\ 
     m \in \mathbb{Z}^{+}. 
\end{equation} 
The Stirling number identity in the proposition 
is of particular utility in 
finding generating functions or other closed--form expressions for the series 
$\sum_{n} g(n) f_n z^n$ where the function $g(n)$ is expanded as an 
infinite power series in the variable $n$. 
For example, for any $q \in \mathbb{C}$ and any natural number $n \geq 0$ 
we may expand the square-power terms, $g(n) = q^{n^2}$, as 
\begin{equation} 
\label{eqn_qPowNSquared_ExpFn_TaylorSeriesExp_in_nSquared} 
q^{n^2} \equiv \exp\left(n^2 \Log(q)\right) = 
     \sum_{i=0}^{\infty} \frac{\Log(q)^{i} n^{2i}}{i!}. 
\end{equation} 
This particular expansion is key to the proof of 
Theorem \ref{thm_SqSeries_OGF_Transforms} immediately below. 
Other examples where this approach applies include the 
lacunary sequence case where $g(n) = q^{2^n}$ for some $|q| < 1$. 
\end{remark} 

\begin{theorem}[Square Series Transformations] 
\label{thm_SqSeries_OGF_Transforms} 
Fix $q \in \mathbb{C}$ and 
suppose that $\langle f_n \rangle$ denotes a prescribed sequence whose 
ordinary generating function, $F_f(z)$, is 
analytic for $z$ on a corresponding non-trivial region of 
$R_{\sq}\left(f, q\right)$. 
Then the unilateral square series functions, $F_{\sq}(f; q, z)$, defined by 
\eqref{eqn_Unilateral_Fsq_SqSeries_GF} satisfy the two formulas 
\begin{subequations} 
\label{eqn_MainThmStmt_OrdExpSqSeriesIntReps-stmts_v1} 
\begin{align} 
\label{eqn_MainThmStmt_OrdSqSeriesIntReps_v1} 
F_{\sq}(f; q, z) & = 
     \int_0^{\infty} \frac{e^{-t^2/2}}{\sqrt{2\pi}} \left[ 
     \sum_{b=\pm 1} \sum_{j=0}^{\infty} 
     \frac{z^j \cdot F_f^{(j)}(z)}{j!} \times 
     \left(e^{bt \sqrt{2 \Log(q)}} - 1\right)^{j} 
     \right] dt \\ 
\label{eqn_MainThmStmt_OrdSqSeriesIntReps_v2} 
   & = 
     \int_0^{\infty} \frac{e^{-t^2/2}}{\sqrt{2\pi}} \left[ 
     \sum_{b=\pm 1} F_f\left( 
     e^{bt \sqrt{2 \Log(q)}} \cdot z 
     \right) 
     \right] dt, 
\end{align} 
\end{subequations} 
for any $q \in \mathbb{C}$ such that $|q| \leq 1$ and 
$z \in R_{\sq}\left(f, q\right)$ such that the square series expansion 
has a non-trivial radius of convergence. 
\end{theorem}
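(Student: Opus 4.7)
The plan is to build formula \eqref{eqn_MainThmStmt_OrdSqSeriesIntReps_v1} from the ingredients already developed in the excerpt, namely Proposition \ref{prop_S2_OGFFz_to_nPowmOGFFz_stmt} for the Stirling-number OGF identity, the power series expansion \eqref{eqn_qPowNSquared_ExpFn_TaylorSeriesExp_in_nSquared} for $q^{n^2}$, and the standard Gaussian moment integrals. Formula \eqref{eqn_MainThmStmt_OrdSqSeriesIntReps_v2} will then drop out of \eqref{eqn_MainThmStmt_OrdSqSeriesIntReps_v1} once the inner sum is recognized as a Taylor series.

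First I would rewrite the square series by substituting $q^{n^2} = \sum_{i \geq 0} \Log(q)^i n^{2i}/i!$ and exchanging the order of summation to obtain $F_{\sq}(f;q,z) = \sum_{i \geq 0} \frac{\Log(q)^i}{i!} \sum_n n^{2i} f_n z^n$. Proposition \ref{prop_S2_OGFFz_to_nPowmOGFFz_stmt} converts each inner series into the finite Stirling-number sum $\sum_{j=0}^{2i} \gkpSII{2i}{j} z^j F_f^{(j)}(z)$, and swapping the summation order once more yields the expansion
\begin{equation*}
F_{\sq}(f;q,z) = \sum_{j=0}^{\infty} \frac{z^j F_f^{(j)}(z)}{j!} \cdot j! \sum_{i \geq \lceil j/2 \rceil} \gkpSII{2i}{j} \frac{\Log(q)^i}{i!}.
\end{equation*}
The next step is to recognize the Stirling-weighted series on the right as a Gaussian moment. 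The exponential generating function $\sum_{n \geq j} \gkpSII{n}{j} x^n / n! = (e^x-1)^j / j!$, combined with the even-moment identity $\int_{-\infty}^{\infty} e^{-t^2/2} t^{2i} dt / \sqrt{2\pi} = (2i)!/(2^i i!)$, gives after setting $x = t\sqrt{2\Log(q)}$ the representation
\begin{equation*}
j! \sum_{i \geq \lceil j/2 \rceil} \gkpSII{2i}{j} \frac{\Log(q)^i}{i!} = \int_{-\infty}^{\infty} \frac{e^{-t^2/2}}{\sqrt{2\pi}} \left(e^{t \sqrt{2 \Log(q)}} - 1\right)^j dt = \int_0^\infty \frac{e^{-t^2/2}}{\sqrt{2\pi}} \sum_{b=\pm 1} \left(e^{bt \sqrt{2 \Log(q)}} - 1\right)^j dt,
\end{equation*}
where odd $t$-moments vanish automatically and folding the integral to the half-line via the substitution $t \mapsto -t$ produces the $b = \pm 1$ sum. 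Substituting this representation back into the $j$-expansion and interchanging the outer $j$-sum with the integral gives \eqref{eqn_MainThmStmt_OrdSqSeriesIntReps_v1}. For \eqref{eqn_MainThmStmt_OrdSqSeriesIntReps_v2}, I would identify the bracketed sum $\sum_{j \geq 0} \frac{z^j F_f^{(j)}(z)}{j!}(u-1)^j$ as the Taylor expansion of $F_f(uz)$ about the base point $z$, evaluated at $u = e^{bt \sqrt{2 \Log(q)}}$, which collapses the $j$-sum into $F_f\!\left(e^{bt\sqrt{2 \Log(q)}} z\right)$.

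The hard part is the analytic justification of the three interchanges of summation and integration, particularly in the regime $|q| < 1$ where $\sqrt{2 \Log(q)}$ is genuinely complex, so that $e^{bt \sqrt{2 \Log(q)}}$ is oscillatory and its modulus may grow like $e^{|t \cdot \Re \sqrt{2\Log(q)}|}$ as $|t| \to \infty$. Absolute convergence of the resulting integrals requires both that $F_f$ remain analytic on a region large enough to contain the curve $t \mapsto e^{bt \sqrt{2 \Log(q)}} z$ and that the Gaussian decay $e^{-t^2/2}$ dominate this modulus growth uniformly in $t$. Isolating a clean sufficient condition on $z$ relative to the radius of convergence of $F_f$ is the delicate technical issue, and it is exactly this condition that defines the non-trivial region $R_{\sq}(f,q)$ on which the theorem is claimed to hold.
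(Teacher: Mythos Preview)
Your proposal is correct and follows essentially the same route as the paper's own proof: expand $q^{n^2}$ via \eqref{eqn_qPowNSquared_ExpFn_TaylorSeriesExp_in_nSquared}, apply Proposition~\ref{prop_S2_OGFFz_to_nPowmOGFFz_stmt}, swap the $i$- and $j$-sums, and then recognize the inner Stirling-weighted series as a Gaussian integral via the Stirling-number EGF and the even moments of the normal distribution. The only cosmetic differences are that the paper reaches the moment identity through the gamma-function duplication formula and an integral for $\Gamma(n+\tfrac12)$ rather than quoting the Gaussian moment directly, and it calls \eqref{eqn_MainThmStmt_OrdSqSeriesIntReps_v2} an ``immediate consequence'' where you spell out the Taylor-series identification; your treatment of the analytic interchange is also more candid than the paper's brief appeal to the Weierstrass $M$-test.
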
 
\begin{proof} 
We begin by expanding out the series in 
\eqref{eqn_Unilateral_Fsq_SqSeries_GF} 
in terms of the $j^{th}$ derivatives of the OGF, $F_f(z)$, for the 
sequence, $\langle f_n \rangle$, and an infinite series over the 
Stirling numbers of the second kind as 
\begin{align} 
\notag 
\sum_{n=0}^{\infty} q^{n^2} f_n\ z^n & = 
\sum_{n=0}^{\infty} \left(\sum_{k=0}^{\infty} 
     \frac{\Log(q)^k n^{2k}}{k!}\right) f_n\ z^n \\ 
\notag 
     & = 
     \sum_{k=0}^{\infty} \frac{\Log(q)^k}{k!} \left( 
     \sum_{n=0}^{\infty} n^{2k} f_n z^n\right) \\ 
\notag 
     & = 
     \sum_{k=0}^{\infty} \left(\frac{\Log(q)^k}{k!} 
     \sum_{j=0}^{2k} \gkpSII{2k}{j} F^{(j)}(z) z^j\right) \\ 
   & = 
\label{eqn_cnsquared_S2_j_sum} 
     \sum_{j=0}^{\infty} \left(\sum_{k=0}^{\infty} \gkpSII{2k}{j} 
     \frac{\Log(q)^k}{k!}\right) z^j F^{(j)}(z). 
\end{align} 
where the terms $\gkpSII{2k}{j}$ are necessarily zero for all $j > 2k$. 

We next proceed to prove an integral transformation for the inner 
Stirling number series in \eqref{eqn_cnsquared_S2_j_sum}. 
In particular, we claim that 
\begin{align} 
\label{eqn_lemma_formulas_for_reciprocals_of_nFact_in_2nFact-stmts_v1} 
\frac{\Log(q)^{i}}{i!} & = \frac{1}{(2i)!} \times \left[ 
     \int_0^{\infty} \frac{2 e^{-t^2/2}}{\sqrt{2\pi}} 
     \left(\sqrt{2 \Log(q)} \cdot t \right)^{2i} dt 
     \right], 
\end{align} 
and then that 
\begin{align} 
\label{eqn_proof_of_thm_claim2} 
\sum_{k \geq 0} \gkpSII{2k}{j} \frac{\Log(q)^k}{k!} & = 
     \frac{1}{\sqrt{2\pi}} \int_0^\infty \left[ 
     \sum_{b = \pm 1} \frac{1}{j!} \left(e^{\sqrt{2 \Log(q)} t}-1\right)^j 
     \right] e^{-t^2 / 2} dt. 
\end{align} 
To prove these claims, first notice that the next identity follows from 
the \emphonce{duplication formula} for the gamma function as 
\citep[\S 5.5(iii)]{NISTHB} 
\begin{equation} 
\label{eqn_2nSingleFact_NFact2PowGammaFn_exp_ident} 
(2n)! = \frac{4^n n!}{\sqrt{\pi}} \Gamma\left(n+\frac{1}{2}\right), 
\end{equation} 
where an integral representation for the gamma function, $\Gamma(n + 1/2)$, is 
given by \citep[\S 5.9(i)]{NISTHB} 
\begin{equation} 
\label{eqn_GammaFn_nPlus1Over2_integral_rep} 
\Gamma\left(n+\frac{1}{2}\right) = 2^{n-1/2} \int_0^{\infty} 
     t^{2n} e^{-t^2 / 2}\ dt. 
\end{equation} 
A well-known exponential generating function for the Stirling numbers of the 
second kind, $\gkpSII{2k}{j}$, at any fixed natural number 
$j \geq 0$ is given by \citep[\S 7.2, \S 7.4]{GKP} 
\begin{align*} 
\sum_{k \geq 0} \gkpSII{2k}{j} \frac{z^{2k}}{(2k)!} & = 
     \frac{1}{2 j!}\left[(e^z - 1)^j + (e^{-z} - 1)^j\right]. 
\end{align*} 
Then we have from \eqref{eqn_cnsquared_S2_j_sum} and 
\eqref{eqn_proof_of_thm_claim2} that 
\begin{align*} 
\sum_{n=0}^{\infty} q^{n^2} f_n\ z^n & = 
     \sum_{j=0}^{\infty} \left( 
     \frac{1}{\sqrt{2\pi}} \int_0^\infty \left[ 
     \sum_{b = \pm 1} \frac{1}{j!} \left(e^{\sqrt{2 \Log(q)} t}-1\right)^j 
     \right] e^{-t^2 / 2} dt. 
     \right) \times z^j F^{(j)}(z). 
\end{align*} 
It is not difficult to show by the Weierstrass M-test that for 
bounded $\Log(q) \in \mathbb{C}$, the sum over the $j^{th}$ powers in 
the previous equation with respect to $j \geq 0$ converges uniformly. 
This implies that we can interchange the order of 
summation and integration to obtain the first result stated in 
\eqref{eqn_MainThmStmt_OrdSqSeriesIntReps_v1}. 
The second equivalent statements of the theorem in 
\eqref{eqn_MainThmStmt_OrdSqSeriesIntReps_v2} 
is an immediate consequence of the first result in 
\eqref{eqn_MainThmStmt_OrdSqSeriesIntReps_v1}. 
\end{proof} 

\begin{table}[ht] 

\begin{center} 

\setlength{\fboxrule}{1.25pt} 
\setlength{\fboxsep}{4pt} 
\fbox{\begin{minipage}{\textwidth} 
     
     \newcommand{\SqSeriesFnTableSubHeader}[1]{
          \taburowcolors 2{Plum!32!White .. Plum!10!White} 
          \hline 
          \multicolumn5{|l|}{\bfseries\normalsize{#1}} \\ \hline\hline 
          \rowfont{\bfseries\small\mathversion{bold}} Eq. & 
               $f_n$ & $F_f(z)$ & 
               $F_f^{(j)}(z)$ & Parameters \\ \hline
          \taburowcolors 2{Plum!0!White .. Plum!0!White} 
     } 
     \newcommand{\AddTableHeader}[1]{\SqSeriesFnTableSubHeader{#1}} 

     \tabulinestyle{2.5pt OTblBC} 
     \taburulecolor |OTblBC|{ITblBC} 
     \arrayrulewidth=1.5pt \doublerulesep=0pt \tabulinesep=3pt
     \begin{tabu} to \TblW {|c |T{+1}{c}|T{+1}{c}|T{+1}{c}|X[l]|} \hline 

       \AddTableHeader{Geometric--Series--Based Sequences} 

       \LabelTableEqn{eqn_Table_SpCaseSequenceOGFFormulas-GeomSeriesBased_v1} & 
           $c^n$ & 
           $\frac{1}{(1-cz)}$ & 
           $\frac{c^j j!}{(1-cz)^{j+1}}$ & 
           $|cz| < 1$ \\ 
       \hline 

       \LabelTableEqn{eqn_Table_SpCaseSequenceOGFFormulas-GeomSeriesBased_v2} & 
           $n c^n$ & 
           $\frac{cz}{(1-cz)^{2}}$ & 
           $\frac{c^j j! \left(cz + j\right)}{(1-cz)^{j+2}}$ & 
           $|cz| < 1$ \\ 
       \hline 

       \LabelTableEqn{eqn_Table_SpCaseSequenceOGFFormulas-GeomSeriesBased_v5} & 
           $n^{2} c^n$ & 
           $\frac{cz(cz+1)}{(1-cz)^{3}}$ & 
           $\frac{c^j j! \left((cz)^2 + (3j+1) cz + j^2\right)}{(1-cz)^{j+3}}$ & 
           $|cz| < 1$ \\ 
       \hline 

       \LabelTableEqn{eqn_Table_SpCaseSequenceOGFFormulas-GeomSeriesBased_v3} & 
           $(an+b) c^n$ & 
           $\frac{(a-b) cz+b}{(1-cz)^{2}}$ & 
           $\frac{c^{j} j! \times \left((a-b) cz + aj+b\right)}{ 
            (1-cz)^{j+2}}$ & 
           $|cz| < 1$; $a,b \in \mathbb{C}$ \\ 
       \hline 

       \LabelTableEqn{eqn_Table_SpCaseSequenceOGFFormulas-GeomSeriesBased_v4} & 
           $n(n-1) c^n$ & 
           $\frac{2 (cz)^2}{(1-cz)^{3}}$ & 
           $\frac{c^j j!\left(2 (cz)^2 + 4j cz + j(j-1)\right)}{(1-cz)^{j+3}}$ & 
           $|cz| < 1$ \\ 
       \hline 

       \AddTableHeader{Exponential--Series--Based Sequences} 

       \LabelTableEqn{eqn_Table_SpCaseSequenceOGFFormulas-ExpSeriesBased_v1} & 
           $\frac{r^n}{n!}$ & 
           $e^{rz}$ & 
           $r^{j} e^{rz}$ & 
           $r \in \mathbb{C}$ \\ 
       \hline 

       \LabelTableEqn{eqn_Table_SpCaseSequenceOGFFormulas-ExpSeriesBased_v2} & 
           $\frac{n(n-1) r^n}{n!}$ & 
           $(rz)^2 e^{rz}$ & 
           $\mathsmaller{r^j e^{rz} \left[(rz)^2 + 2j rz + j(j-1)\right]}$ & 
           $r \in \mathbb{C}$ \\ 
       \hline\hline 

     \end{tabu} 

     \setcaption{table}{Special Case Sequences and Generating Function Formulas}{ 
                   Listings of the ordinary generating functions, $F_f(z)$, 
                   $j^{th}$ derivative formulas, 
                   $F_f^{(j)}(z)$ for $j \in \mathbb{N}$, and 
                   corresponding series parameters 
                   for several special case sequences, $\langle f_n \rangle$.} 
     \label{table_fnSeq_OGFs_and_jthDerivs_listings-stmts_v1} 

\end{minipage}} 
\end{center} 

\end{table} 

\begin{remark}[Admissible Forms of Sequence Generating Functions]
In practice, the 
assumption made in phrasing 
Proposition \ref{prop_S2_OGFFz_to_nPowmOGFFz_stmt}
and in the 
theorem statement requiring that the sequence OGF is analytic on some 
non-trivial $R_0(f)$, \ie so that $F_f(z) \in C^{\infty}(R_0(f))$, 
does not impose any significant or particularly restrictive conditions on the 
applications that may be derived from these transformations. 
Rather, provided that suitable choices of the 
series parameters in the starting OGF series 
for many variations of the geometric-series-based and 
exponential-series-based sequence forms lead to convergent generating 
functions analytic on some open disk, the 
formulas for all non-negative integer-order OGF derivatives exist, and 
moreover, are each easily obtained in closed-form as 
functions over all $j \geq 0$. 

Table \ref{table_fnSeq_OGFs_and_jthDerivs_listings-stmts_v1} 
lists a few such closed-form formulas 
for the $j^{th}$ derivatives of such sequence OGFs 
corresponding to a few particular classes of these square series expansions 
that arise frequently in our applications. 
The particular variations of the geometric square series in the forms of the 
generalized functions defined in 
Table \ref{table_Notation_SeriesDefsOfSpecialSqSeries-stmts_v1} 
on page \pageref{table_Notation_SeriesDefsOfSpecialSqSeries-stmts_v1} 
arise naturally in applications including the study of theta functions, 
modular forms, in infinite products that form the 
generating functions for partition functions, and in other 
combinatorial sequences of interest within the article. 
The new integral representations for the generalized classes of 
series expansions for the functions defined by 
Table \ref{table_Notation_SeriesDefsOfSpecialSqSeries-stmts_v1} 
given in the next sections are derived from 
Theorem \ref{thm_SqSeries_OGF_Transforms} and from the 
formulas for the particular special case sequence OGFs given separately in 
Table \ref{table_fnSeq_OGFs_and_jthDerivs_listings-stmts_v1}. 
The next sections provide applications extending the examples 
already given in Section \ref{subSection_Intro_Examples}. 
\end{remark} 

\section{Applications of the Geometric Square Series} 
\label{Section_Applications_of_GeomSqSeries} 

\subsection{Initial Results} 

In this subsection, we state and prove initial results providing 
integral representations for a few generalized classes of the 
geometric square series cases where $f_n := c^n$ which follow from 
Theorem \ref{thm_SqSeries_OGF_Transforms}. 
Once we have these generalized propositions at our disposal, we move along 
to a number of more specific applications of these results in the 
next subsections which provide new 
integral representations for the Jacobi theta functions and other 
closely-related special function series and constants. 
The last subsection of this section proves corresponding integral 
representations for the polynomial multiples of the geometric 
square series where $f_n := (\alpha n+\beta)^m \cdot c^n$ which have 
immediate applications to the expansions of higher-order derivatives of the 
Jacobi theta functions. 

\StartGroupingSubEquations{} 
\label{eqn_geom_series_initial_integrals} 

\begin{prop}[Geometric Square Series] 
\label{prop_OrdExp_GeomSquareSeries} 
Let $q \in \mathbb{C}$ be defined such that $|q| < 1$ and 
suppose that $c,z \in \mathbb{C}$ 
are defined such that $|cz| < 1$. 
For these choices of the series parameters, $q,c,z$, the 
\keywordemph{ordinary geometric square series}, $G_{\sq}(q, c, z)$, 
defined by \eqref{eqn_Ordinary_and_ExpGeomSqSeries_defs_stmts_v2} of 
Table \ref{table_Notation_SeriesDefsOfSpecialSqSeries-stmts_v1} 
satisfies an integral representation of the following form: 
\begin{align} 
\label{eqn_geom_series_initial_integrals-form_v2} 
G_{\sq}(q, c, z) & = 
     \int_0^{\infty} \frac{2 e^{-t^2/2}}{\sqrt{2\pi}} \left[ 
     \frac{1 - cz \cosh\left(t \sqrt{2 \Log(q)}\right)}{ 
     c^2 z^2 - 2cz \cosh\left(t \sqrt{2 \Log(q)}\right) + 1} 
     \right] dt. 
\end{align} 
\end{prop}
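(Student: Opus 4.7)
The plan is to apply Theorem~\ref{thm_SqSeries_OGF_Transforms} directly with the choice of sequence $f_n := c^n$, whose ordinary generating function is the elementary geometric series $F_f(z) = (1-cz)^{-1}$, analytic on the open disk $|cz| < 1$. With this identification we obtain
\begin{align*}
G_{\sq}(q, c, z) = \int_0^{\infty} \frac{e^{-t^2/2}}{\sqrt{2\pi}} \left[ \sum_{b=\pm 1} \frac{1}{1 - cz\, e^{bt\sqrt{2\Log(q)}}} \right] dt,
\end{align*}
so the entire content of the proposition reduces to an algebraic simplification of the inner bracketed sum.

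Next I would combine the two rational summands over a common denominator. Setting $\alpha := cz$ and $\beta := e^{t\sqrt{2\Log(q)}}$ for brevity, the sum is
\begin{align*}
\frac{1}{1 - \alpha\beta} + \frac{1}{1 - \alpha\beta^{-1}} = \frac{2 - \alpha(\beta + \beta^{-1})}{1 - \alpha(\beta + \beta^{-1}) + \alpha^{2}}.
\end{align*}
Recognizing $\beta + \beta^{-1} = 2\cosh(t\sqrt{2\Log(q)})$, multiplying numerator and denominator through, and substituting $\alpha = cz$ recovers exactly the integrand of~\eqref{eqn_geom_series_initial_integrals-form_v2}.

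The remaining work is to verify the hypotheses of Theorem~\ref{thm_SqSeries_OGF_Transforms} on the relevant parameter region so that the interchange of sum and integral underlying the theorem is legitimate. Since $|q| < 1$ and $|cz| < 1$ are given, one checks that $F_f$ is entire as a function of $(1-cz)^{-1}$ on the prescribed disk and that all derivatives $F_f^{(j)}(z) = c^{j} j!/(1-cz)^{j+1}$ are uniformly bounded along the integration contour, as recorded in Table~\ref{table_fnSeq_OGFs_and_jthDerivs_listings-stmts_v1}. I anticipate that the principal technical subtlety, rather than the algebraic simplification, is confirming that the denominator $c^2 z^2 - 2cz\cosh(t\sqrt{2\Log(q)}) + 1$ does not vanish for real $t \geq 0$ when $|cz| < 1$ and $|q| \leq 1$; this follows because $\cosh(t\sqrt{2\Log(q)})$ stays in a bounded region (in the real case where $\Log(q) < 0$ it equals $\cos(t\sqrt{-2\Log(q)})$), which keeps the quadratic form $(\alpha - \beta)(\alpha - \beta^{-1})$ away from zero on the relevant parameter range. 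Once this is in place, the proposition follows immediately from Theorem~\ref{thm_SqSeries_OGF_Transforms}.
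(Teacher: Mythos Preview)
Your proposal is correct and follows essentially the same approach as the paper: apply Theorem~\ref{thm_SqSeries_OGF_Transforms} with $f_n = c^n$ and $F_f(z) = (1-cz)^{-1}$, then combine the two rational summands over a common denominator and rewrite $\beta + \beta^{-1}$ as $2\cosh(t\sqrt{2\Log(q)})$. The paper's own proof is in fact briefer than yours, omitting the discussion of the denominator's nonvanishing and the boundedness of the derivatives.
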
 
\begin{proof}[Proof Sketch] 
The general method of proof employed within this section is given 
along the lines of the second phrasing of the transformation results in 
\eqref{eqn_MainThmStmt_OrdSqSeriesIntReps_v2} of 
Theorem \ref{thm_SqSeries_OGF_Transforms}. 
Alternately, we are able to arrive in these results as in the 
supplementary \emph{Mathematica} notebook \citep{SQS-SUMMARYNB-REF}
by applying the 
formula involving the sequence OGFs in 
\eqref{eqn_MainThmStmt_OrdSqSeriesIntReps_v1} 
using the results given in 
Table \ref{table_fnSeq_OGFs_and_jthDerivs_listings-stmts_v1} 
where the series parameter $z$ is taken over the 
ranges stated in the rightmost columns of the table. 
\renewcommand{\qedsymbol}{$\boxdot$}
Thus we may use both forms of the transformation result stated in 
the theorem interchangeably 
to arrive at proofs of our new results in subsequent sections of the article. 
\end{proof} 
\begin{proof} 
We prove the particular result for this formula in 
\eqref{eqn_geom_series_initial_integrals-form_v2} 
as a particular example case of the 
first similar argument method to be employed in the further results given 
in this section below. 
In particular, for $|cz| < 1$, the 
main theorem and the OGF for the special case of the 
geometric-series-based sequence, $f_n \equiv c^n$, given in 
\eqref{eqn_Table_SpCaseSequenceOGFFormulas-GeomSeriesBased_v1} 
of the table imply that 
\begin{align} 
\label{eqn_geom_series_initial_integrals-form_v1} 
G_{\sq}(q, c, z) & = 
     \int_0^{\infty} \frac{e^{-t^2/2}}{\sqrt{2\pi}} \left[ 
     \sum_{b=\pm 1} \left(1-e^{bt \sqrt{2 \Log(q)}} cz\right)^{-1} 
     \right] dt. 
\end{align} 
The separate integrand functions in the previous equation are combined as 
\begin{align} 
\tag{i} 
\label{eqn_ProofOf_GeomSqSeriesFirstIntRepProp-tagged_formula_v.i} 
\sum_{b=\pm 1} \left(1-e^{bt \sqrt{2 \Log(q)}} cz\right)^{-1} & = 
     \frac{ 
     2 - cz \left[e^{t \sqrt{2 \Log(q)}} + e^{-t \sqrt{2 \Log(q)}}\right]}{ 
     c^2z^2 - \left[e^{t \sqrt{2 \Log(q)}} + e^{-t \sqrt{2 \Log(q)}}\right] 
     + 1}, 
\end{align} 
Since the \keywordemph{hyperbolic cosine function} satisfies 
\citep[\S 4.28]{NISTHB} 
\begin{align} 
\label{eqn_ProofOf_GeomSqSeriesFirstIntRepProp-cosh_ident_formula-tag_v.ii} 
2 \times \cosh\left(t \sqrt{2 \Log(q)}\right) & = 
     2 \times \left[ 
     \sum_{b=\pm 1} \frac{e^{bt \sqrt{2 \Log(q)}}}{2} 
     \right], 
\end{align} 
the integral formula claimed in the proposition then follows by 
rewriting the intermediate exponential terms in the combined formula from 
\eqref{eqn_ProofOf_GeomSqSeriesFirstIntRepProp-tagged_formula_v.i} 
given above. 
\end{proof} 

\begin{remark}[Integrals for Shifted Forms of the Geometric Square Series] 
\label{remark_} 
\label{prop_GenGeomSqSeries_integral_stmt} 
For any $d \in \mathbb{Z}^{+}$, we note that the 
slightly modified functions, $\vartheta_{d}(q, c, z)$, defined as the 
\keywordemph{``shifted'' geometric square series} from 
\eqref{eqn_Theta_cqz_GeomSeries-Based_series_stmt} in 
Table \ref{table_Notation_SeriesDefsOfSpecialSqSeries-stmts_v1} 
satisfy expansions through $G_{\sq}(q, c, z)$ of the form 
\begin{align} 
\notag 
\widetilde{\vartheta}_{d}(q, c, z) & = 
     \left(q^{d} cz\right)^{d} \times G_{\sq}\left(q, q^{2d} c, z\right). 
\end{align} 
Moreover, for any fixed choice of $d \in \mathbb{Z}^{+}$, 
we may similarly obtain modified forms of the integral representations in 
Proposition \ref{prop_OrdExp_GeomSquareSeries} 
as follows: 
\begin{align} 
\notag 
G_{\sq}(q, c, z) & = 
     \sum_{i=0}^{d-1} q^{i^2} (cz)^{i} + \widetilde{\vartheta}_{d}(q, c, z), 
     \text{ if } 
     |cz| < \min\left(1, |q|^{-2d}\right). 
\end{align} 
If we then specify that 
$|cz| < \min\left(1, |q|^{-2d}\right)$ 
for any fixed integer $d \geq 0$, 
we may expand integral representations for the shifted series variants 
of these functions through the formulas given in 
Proposition \ref{prop_OrdExp_GeomSquareSeries} as 
\begin{align} 
\label{eqn_geom_series_Theta_cqz_integral_stmt-Tmcqz_v1} 
\widetilde{\vartheta}_d(c, q, z) & = 
     \left(q^{d} cz\right)^{d} 
     \int_0^{\infty} \frac{e^{-t^2/2}}{\sqrt{2\pi}} \left[\frac{ 
     2 -  2 q^{2d} cz \cosh\left(t \sqrt{2 \Log(q)}\right)}{ 
     q^{4d} c^2 z^2 - 2 q^{2d} cz \cosh\left(t \sqrt{2 \Log(q)}\right) + 1} 
     \right] dt. 
\end{align} 
The series for the special case of the shifted series when $d := 1$ 
arises in the unilateral series expansions of 
bilateral square series of the form 
\begin{align} 
\notag 
\sum_{n=-\infty}^{\infty} \left(\pm 1\right)^{n} q^{n(r_1 n+r_0)/2} & = 
     1 + \sum_{n=1}^{\infty} \left(\pm 1\right)^{n} q^{n(r_1 n+r_0)/2} + 
      \sum_{n=1}^{\infty} \left(\pm 1\right)^{n} q^{n(r_1 n-r_0)/2}. 
\end{align} 
Whenever $|cz| < \min\left(1, |q|^{-2}\right)$, 
this shifted square series function satisfies the special case 
integral formula given by 
\begin{align} 
\label{eqn_geom_series_Theta_cqz_integral_stmt-SpCase_dEq1-Tmcqz_v2} 
\widetilde{\vartheta}_1(c, q, z) & = 
     \int_0^{\infty} \frac{e^{-t^2/2}}{\sqrt{2\pi}} \left[\frac{ 
     2qcz -  2 q^{3} c^2 z^2 \cosh\left(t \sqrt{2 \Log(q)}\right)}{ 
     q^{4} c^2 z^2 - 2 q^{2} cz \cosh\left(t \sqrt{2 \Log(q)}\right) + 1} 
     \right] dt. 
\end{align} 
\end{remark} 

\EndGroupingSubEquations{} 

\begin{cor}[A Special Case] 
\label{prop_Qsq_abcqz_unilateral_series_fn_integral_rep_v1} 
Suppose that $a, b \in \mathbb{C}$ are fixed scalars and that 
$c, z \in \mathbb{C}$ are chosen such that $|cz| < 1$. 
For these choices of the series parameters in the definition of 
\eqref{eqn_GeomSqSeries_Qabqcz-intro_series_exp_v1}, the 
square series function, $Q_{a,b}(q, c, z)$, 
has an integral representation of the form 
\begin{align} 
\label{eqn_Qsq_abcqz_unilateral_series_fn_integral_rep_v1} 
Q_{a,b}(q, c, z) & = \int_0^{\infty} 
     \frac{2acz e^{-t^2/2}}{\sqrt{2\pi}} \left[ 
     \frac{\left(c^2 z^2 + 1\right) \cosh\left(t \sqrt{2 \Log(q)}\right) - 2cz}{ 
     \left(c^2 z^2 - 2cz \cosh\left(t \sqrt{2 \Log(q)}\right) + 1\right)^2} 
     \right] dt \\ 
\notag 
   & \phantom{= \int_0^{\infty} } + 
     b \cdot G_{\sq}(q, c, z), 
\end{align} 
where the second term 
in \eqref{eqn_Qsq_abcqz_unilateral_series_fn_integral_rep_v1} 
given in terms of the function, $G_{\sq}(q, c, z)$, 
satisfies the integral representation from 
Proposition \ref{prop_OrdExp_GeomSquareSeries} given above. 
\end{cor}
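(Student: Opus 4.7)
The plan is to split $Q_{a,b}(q,c,z)$ by linearity into two pieces and handle each via Theorem \ref{thm_SqSeries_OGF_Transforms}. Write
\begin{align*}
Q_{a,b}(q,c,z) = a \sum_{n \geq 0} n\, q^{n^2} c^n z^n + b \sum_{n \geq 0} q^{n^2} c^n z^n
 = a \cdot F_{\sq}(f; q, z) + b \cdot G_{\sq}(q, c, z),
\end{align*}
where $f_n := n c^n$. The second summand is immediately handled by Proposition \ref{prop_OrdExp_GeomSquareSeries}, which accounts for the $b \cdot G_{\sq}(q,c,z)$ term in the claim and explains why its integrand need not be reproduced explicitly in the statement.

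For the first summand, I would invoke the compact form of the transformation in \eqref{eqn_MainThmStmt_OrdSqSeriesIntReps_v2} with $F_f(z) = cz/(1-cz)^2$ taken from row \eqref{eqn_Table_SpCaseSequenceOGFFormulas-GeomSeriesBased_v2} of Table \ref{table_fnSeq_OGFs_and_jthDerivs_listings-stmts_v1}. This yields
\begin{align*}
F_{\sq}(f; q, z) = \int_0^{\infty} \frac{e^{-t^2/2}}{\sqrt{2\pi}} \sum_{b=\pm 1} \frac{c\, e^{bt\sqrt{2\Log(q)}} z}{\bigl(1 - c\, e^{bt\sqrt{2\Log(q)}} z\bigr)^2}\, dt,
\end{align*}
and the hypothesis $|cz|<1$ (together with an appropriate choice of branch for $\sqrt{2\Log(q)}$ when $|q|\leq 1$) ensures analyticity of $F_f$ at the two displaced arguments.

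The core computation is then to collapse the symmetric sum over $b = \pm 1$. Setting $u := e^{t\sqrt{2\Log(q)}}$ so that $u + u^{-1} = 2\cosh\bigl(t\sqrt{2\Log(q)}\bigr)$, I would place the two fractions over the common denominator $\bigl[(1-cuz)(1-cz/u)\bigr]^2$. The denominator simplifies cleanly to $\bigl(c^2 z^2 - 2cz\cosh(t\sqrt{2\Log(q)}) + 1\bigr)^2$, matching the squared denominator in \eqref{eqn_Qsq_abcqz_unilateral_series_fn_integral_rep_v1}. The numerator expands to $cuz(1-cz/u)^2 + (cz/u)(1-cuz)^2$, and after collecting like powers of $u$ and $u^{-1}$ becomes $cz(u+u^{-1})(1+c^2z^2) - 4c^2 z^2 = 2cz\bigl[(c^2z^2+1)\cosh(t\sqrt{2\Log(q)}) - 2cz\bigr]$. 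Multiplying by the prefactor $a$ from the decomposition reproduces exactly the first integral in the statement.

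The only genuine obstacle is the algebraic simplification in the previous paragraph, specifically recognizing the hyperbolic cosine symmetrization and the perfect-square structure of the denominator; the rest is bookkeeping. No additional convergence issues arise beyond those already guaranteed by Theorem \ref{thm_SqSeries_OGF_Transforms} and Proposition \ref{prop_OrdExp_GeomSquareSeries}, since the hypothesis $|cz| < 1$ is identical to the one assumed in the cited results.
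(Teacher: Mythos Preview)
Your proof is correct and follows essentially the same approach as the paper: split $Q_{a,b}$ linearly, handle the $b$-term via Proposition~\ref{prop_OrdExp_GeomSquareSeries}, and for the $a$-term apply Theorem~\ref{thm_SqSeries_OGF_Transforms} with the OGF $cz/(1-cz)^2$ from \eqref{eqn_Table_SpCaseSequenceOGFFormulas-GeomSeriesBased_v2}, then symmetrize the $b=\pm1$ sum into the $\cosh$ form. The paper also notes the equivalent route of differentiating the integrand of $G_{\sq}$ in \eqref{eqn_geom_series_initial_integrals-form_v2} termwise in $z$ (using $Q_{a,b}=az\,G_{\sq}'+b\,G_{\sq}$), but your direct application of the theorem is the first method the paper mentions and the algebra you carry out is identical.
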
 
\begin{proof} 
The result follows by applying the theorem 
to the sequence OGF formulas in 
\eqref{eqn_Table_SpCaseSequenceOGFFormulas-GeomSeriesBased_v1} and 
\eqref{eqn_Table_SpCaseSequenceOGFFormulas-GeomSeriesBased_v2}, 
respectively, and then combining the denominators in the second integrand using 
\eqref{eqn_ProofOf_GeomSqSeriesFirstIntRepProp-cosh_ident_formula-tag_v.ii} 
as in the first proof of 
Proposition \ref{prop_OrdExp_GeomSquareSeries}. 
Equivalently, we notice that the integrand in 
\eqref{eqn_geom_series_initial_integrals-form_v2} 
of the previous proposition may be differentiated termwise with 
respect to $z$ to form the first term in the new integral representation 
formulated by \eqref{eqn_Qsq_abcqz_unilateral_series_fn_integral_rep_v1}. 
In this case, the 
function $Q_{a,b}(q, c, z)$ defined as the power series in 
\eqref{eqn_GeomSqSeries_Qabqcz-intro_series_exp_v1} 
is related to the function, $G_{\sq}(q, c, z)$, by 
\begin{equation} 
\notag 
Q_{a,b}(q, c, z) = az \cdot G_{\sq}^{\prime}(q, c, z) + 
     b \cdot G_{\sq}(q, c, z), 
\end{equation} 
where the right-hand-side derivative is taken with respect to $z$, 
and where $G_{\sq}(q, c, z)$ may be differentiated 
termwise with respect to $z$. 
In particular, 
the first derivative of the function, $G_{\sq}(q, c, z)$, is 
obtained through 
Proposition \ref{prop_OrdExp_GeomSquareSeries} by differentiation as 
\begin{align} 
\notag 
G_{\sq}^{\prime}(q, c, z) & = 
     \int_0^{\infty} 
     \frac{2 e^{-t^2/2}}{\sqrt{2\pi}} \times 
     \frac{\partial}{\partial{z}}\left\lbrace 
     \frac{1 - cz \cosh\left(\sqrt{2 \Log(q)} t\right)}{ 
     c^2 z^2 - 2cz \cosh\left(\smash[bt]{\sqrt{2 \Log(q)}} t\right) + 1} 
     \right\rbrace 
     dt \\ 
\notag 
   & = 
     \int_0^{\infty} 
     \frac{2c e^{-t^2/2}}{\sqrt{2\pi}} \left[ 
     \frac{\left(c^2 z^2 + 1\right) \cosh\left(\sqrt{2 \Log(q)} t\right) - 2cz}{ 
     \left(c^2 z^2 - 2cz \cosh\left(\sqrt{2 \Log(q)} t\right) + 1\right)^2} 
     \right] dt. 
     \qedhere 
\end{align} 
\end{proof} 

\subsection{Jacobi Theta Functions and Related Special Function Series} 
\label{subsubSection_Apps_SpCases_of_the_JacobiThetaFns} 
\label{subSection_Appls_Ints_for_SPFns_Related_to_the_GeomSqSeries} 

The \emphonce{geometric square series}, $G_{\sq}(q, c, z)$, 
is related to the form of many special functions where the parameter 
$z := p(q)$ is some fixed rational power of $q$. 
A number of 
such expansions of these series correspond to special cases of this 
generalized function, denoted by $G_{\sq}\left(p, m; q, \pm c\right)$, 
expanded through its integral representation below 
where the fixed $p$ and $m$ 
are considered to assume some values chosen over the 
positive rationals. 
These integral representations, of course, do not lead to simple 
power series expansions in $q$ of the integrand about zero, and 
cannot be integrated 
termwise with respect to $q$ in their immediate form following from 
Proposition \ref{prop_OrdExp_GeomSquareSeries}. 

Notice that 
whenever $p, m, q, c \in \mathbb{C}$ are taken 
such that $|q^{p}| \in (0, 1)$ and $|q^{m} c| < 1$, 
an integral representation for the function, 
$G_{\sq}\left(p, m; q, \pm c\right) \equiv 
 Gi_{\sq}\left(q^p, c \cdot q^m, \pm 1\right)$, 
defined by the series from 
\eqref{eqn_GeomSqSeries_SpFnVariants_GsqpmqcFn_series_def_v1} follows from 
Proposition \ref{prop_OrdExp_GeomSquareSeries} as
\begin{align} 
\label{eqn_GeomSqSeries_SpFnVariants_GsqpmqcFn_ModIntRep_v2} 
G_{\sq}\left(p, m; q, \pm c\right) & = 
     \int_0^{\infty} \frac{2 e^{-t^2/2}}{\sqrt{2\pi}} \left[ 
     \frac{\left(1 \mp c q^m \cosh \left(\sqrt{2p \Log(q)} t\right) 
     \right)}{c^2 q^{2m} \mp 2 c q^{m} z \cosh\left( 
     \sqrt{2p \Log(q)} t\right) + 1} \right] dt. 
\end{align} 
The series for the Jacobi theta functions, 
$\vartheta_i(q) \equiv \vartheta_i(0, q)$ when $i = 2,3,4$, and 
\emph{Ramanujan's functions}, $\varphi(q)$ and $\psi(q)$, 
are examples of these special case series 
that are each cited as particular applications of the result in 
\eqref{eqn_GeomSqSeries_SpFnVariants_GsqpmqcFn_ModIntRep_v2} 
in the next subsections of the article. 

\subsubsection*{Integral Representations for the Jacobi Theta Functions, $\vartheta_i(q)$} 

The variants of the classical Jacobi theta functions, 
$\vartheta_i(u, q)$ for $i = 1,2,3,4$, defined by the functions, 
$\vartheta_i(q) \equiv \vartheta_i(0, q)$ where 
$\vartheta_i(q) \nequiv 0$ for $i = 2,3,4$ 
are expanded through the asymmetric, unilateral Fourier series for the 
classical theta functions given in the introduction. 
The next proposition gives the precise statements of the new forms of the 
square series integral representations satisfied by these theta functions. 

\begin{prop}[Integral Representations for Jacobi Theta Functions] 
\label{prop_JacobiThetaFnVariants_Thetaiq_IntReps_stmts_v3} 
\begin{subequations} 
\label{eqn_JacobiThetaFnVariants_Thetaiq_IntReps_stmts_v3} 
For any $q \in \mathbb{C}$ such that $|q| \in (0, 1/2)$, the 
theta function, $\vartheta_2(q)$, has the integral representation 
\begin{align} 
\label{eqn_JacobiThetaFnVariants_Thetaiq_IntReps-stmt_vT.2(q)} 
\vartheta_2(q) & = 4 q^{1/4} \int_0^{\infty} 
     \frac{e^{-t^2/2}}{\sqrt{2\pi}} \left[ 
     \frac{1 - q \cosh\left(\sqrt{2 \Log(q)} t\right)}{ 
     q^2 - 2q \cosh\left(\sqrt{2 \Log(q)} t\right) + 1} 
     \right] dt. 
\end{align} 
Moreover, 
for any $q \in \mathbb{C}$ such that $|q| \in (0, 1/4)$, the 
remaining two theta function cases, $\vartheta_i(q)$, for $i = 3, 4$, 
satisfy respective integral representations given by 
\begin{align} 
\label{eqn_JacobiThetaFnVariants_Thetaiq_IntReps-stmt_vT.3(q)} 
\vartheta_3(q) & = 1 + 4q \int_0^{\infty} 
     \frac{e^{-t^2/2}}{\sqrt{2\pi}} \left[ 
     \frac{1 - q^2 \cosh\left(\sqrt{2 \Log(q)} t\right)}{ 
     q^4 - 2 q^2 \cosh\left(\sqrt{2 \Log(q)} t\right) + 1} 
     \right] dt \\ 
\label{eqn_JacobiThetaFnVariants_Thetaiq_IntReps-stmt_vT.4(q)} 
\vartheta_4(q) & = 1 - 4q \int_0^{\infty} 
     \frac{e^{-t^2/2}}{\sqrt{2\pi}} \left[ 
     \frac{1 + q^2 \cosh\left(\sqrt{2 \Log(q)} t\right)}{ 
     q^4 + 2 q^2 \cosh\left(\sqrt{2 \Log(q)} t\right) + 1} 
     \right] dt. 
\end{align} 
\end{subequations} 
\end{prop}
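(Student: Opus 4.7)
The plan is to recognize each Jacobi theta function $\vartheta_i(q)$ as a specialization of the geometric square series $G_{\sq}(q, c, z)$ (possibly after reindexing) and then invoke Proposition \ref{prop_OrdExp_GeomSquareSeries} directly with the appropriate values of the parameters $c$ and $z$. The formulas to target are the unilateral Fourier--type series for $\vartheta_i(u,q)$ listed on page \pageref{def_JacobiThetaFns_VarThetaizq_FourierSeries_exps}, evaluated at $u=0$.

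For $\vartheta_2(q)$, I would set $u=0$ in the unilateral series for $\vartheta_2(u,q)$ to obtain $\vartheta_2(q) = 2 q^{1/4} \sum_{n \geq 0} q^{n(n+1)} = 2 q^{1/4} \sum_{n \geq 0} q^{n^2} q^{n}$, which is precisely $2 q^{1/4}\, G_{\sq}(q, q, 1)$. Substituting $c = q$, $z = 1$ into the integral formula of Proposition \ref{prop_OrdExp_GeomSquareSeries} and pulling the factor of $2$ inside the integral immediately yields \eqref{eqn_JacobiThetaFnVariants_Thetaiq_IntReps-stmt_vT.2(q)}. For $\vartheta_3(q)$, the series at $u=0$ reads $1 + 2\sum_{n \geq 1} q^{n^2}$; reindexing by $n \mapsto n+1$ gives $1 + 2q \sum_{n \geq 0} q^{n^2}(q^2)^n = 1 + 2q\, G_{\sq}(q, q^2, 1)$, and applying the proposition with $c = q^2$, $z = 1$ produces \eqref{eqn_JacobiThetaFnVariants_Thetaiq_IntReps-stmt_vT.3(q)}. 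The $\vartheta_4(q)$ case is identical except that the sign in $(-1)^n$ forces the substitution $c = -q^2$, so the overall external factor picks up a minus sign and the denominator terms $q^4 - 2q^2 \cosh(\cdots)$ become $q^4 + 2q^2 \cosh(\cdots)$, giving \eqref{eqn_JacobiThetaFnVariants_Thetaiq_IntReps-stmt_vT.4(q)}.

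The bulk of the work is algebraic bookkeeping on the index shift and the identification $c z \mapsto q, q^2, -q^2$; the analytic content is fully furnished by Proposition \ref{prop_OrdExp_GeomSquareSeries}. The only genuine issue to check is the stated convergence range: the hypothesis of that proposition is $|cz| < 1$, which gives only $|q| < 1$ for $\vartheta_2$ and $|q|^2 < 1$ for $\vartheta_{3,4}$. The sharper bounds $|q| < 1/2$ and $|q| < 1/4$ stated here need to be justified separately by showing the integrand in \eqref{eqn_geom_series_initial_integrals-form_v2} is absolutely integrable under the reindexing; I expect this is where the main (small) obstacle lies. Concretely, with $c = q$ (resp.\ $c = \pm q^2$) and $z=1$, one must verify that $|c| \cosh(\mathrm{Re}\,t\sqrt{2\Log q})$ remains bounded away from the roots of the denominator quadratic $c^2z^2 - 2cz\cosh(\cdots) + 1$, and a sufficient condition comes out to requiring $|q| < 1/2$ in the first case and $|q| < 1/4$ in the latter two, matching the proposition. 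Once this convergence check is in place, the three identities follow from a single application of Proposition \ref{prop_OrdExp_GeomSquareSeries} per case, with no further manipulation required.
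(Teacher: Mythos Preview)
Your proposal is correct and follows essentially the same route as the paper: set $u=0$ in the unilateral Fourier series, identify each $\vartheta_i(q)$ with the appropriate specialization of $G_{\sq}(q,c,z)$ (namely $(c,z)=(q,1)$, $(q^2,1)$, $(q^2,-1)$), and apply Proposition \ref{prop_OrdExp_GeomSquareSeries}. Your attention to the sharper bounds $|q|<1/2$ and $|q|<1/4$ is in fact more careful than the paper's own proof, which simply asserts these ranges without further comment.
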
 
\begin{proof} 
After setting $u := 0$ in the Fourier series forms for the full series 
for the Jacobi theta functions, $\vartheta_i(u, q)$, from the introduction, 
the series for the theta functions, $\vartheta_i(q)$, are expanded for a 
fixed $q$ as follows: 
\begin{align} 
\tag{i} 
\label{eqn_VarThetaiq_JacobiThetaFnSpCases_FourierSeries_defs_stmts_v2} 
\vartheta_2(q) & = 2 q^{1/4} \sum_{n=0}^{\infty} q^{n^2} q^{n} \equiv 
     2 q^{1/4} G_{sq}\left(q, q, 1\right) \\ 
\tag{ii} 
\vartheta_3(q) & = 1 + 2q \sum_{n=0}^{\infty} q^{n^2} q^{2n} \equiv 
     1 + 2q G_{sq}\left(q, q^2, 1\right) \\ 
\tag{iii} 
\vartheta_4(q) & = 1 - 2q \sum_{n=0}^{\infty} q^{n^2} (-1)^{n} q^{2n} \equiv 
     1 - 2q G_{sq}\left(q, q^2, -1\right). 
\end{align} 
The integrals given on the right--hand--sides of 
\eqref{eqn_JacobiThetaFnVariants_Thetaiq_IntReps_stmts_v3} 
follow by applying 
\eqref{eqn_GeomSqSeries_SpFnVariants_GsqpmqcFn_ModIntRep_v2} 
to each of these series first with 
$|q| \in (0, 1/2)$, and 
then with $|q| \in (0, 1/4)$ in the last two cases, respectively. 
\end{proof} 

The next integral formulas for the 
\textit{Mellin transform} involving the Jacobi theta functions, 
$\vartheta_i(0, \imath x^2)$, with $i = 2, 3, 4$ give 
closed-form expansions of special constants defined 
in terms of the \textit{gamma function}, 
$\Gamma(z)$, and the \textit{Riemann zeta function}, $\zeta(s)$, for 
$\Re(s) > 2$ \citep[eq. (20.10.1)--(20.10.3); \S 20.10(i)]{NISTHB}. 

\begin{cor}[Mellin Transforms of the Jacobi Theta Functions] 
\label{cor_JThetaT2_RZeta_MellinTransform_app} 
\label{ex_JThetaT2_RZeta_MellinTransform_app} 
Let $s \in \mathbb{C}$ denote a fixed constant such that 
$\Re(s) > 2$. Then we have that 
\begin{align*} 
(2^s - 1) \pi^{-s/2} & \Gamma(s/2) \zeta(s) = 
     \int_0^{\infty} x^{s-1} \vartheta_2(0, \imath x^2) dx \\ 
   & = 
     \int_0^{\infty} \int_0^{\infty} 
     \frac{e^{-\frac{t^2}{2}-\frac{\pi x^2}{4}}}{\sqrt{2\pi}} 
     \frac{4 x^{s-1} 
     \left(1-e^{-\pi  x^2} \cos \left(\sqrt{2 \pi } t
     x\right)\right)}{\left(e^{-2 \pi  x^2} - 
     2 e^{-\pi  x^2} \cos \left(\sqrt{2 \pi } t x\right)+1\right)} dt dx \\ 
\pi^{-s/2} & \Gamma(s/2) \zeta(s) = \int_0^{\infty} 
     x^{s-1} \left(\vartheta_3(0, \imath x^2) - 1\right) dx \\ 
   & \phantom{\Gamma(s/2) \zeta(s)} = 
     \int_0^{\infty} \int_0^{\infty} 
     \frac{e^{-\frac{t^2}{2} - \pi x^2}}{\sqrt{2\pi}} 
     \frac{4 x^{s-1}
     \left(1-e^{-2 \pi  x^2} \cos \left(\sqrt{2 \pi } t
     x\right)\right)}{\left(e^{-4 \pi  x^2} - 
     2 e^{-2 \pi  x^2} \cos \left(\sqrt{2 \pi } t x\right)+1\right)} dt dx \\ 
(1 - 2^{1-s}) \pi^{-s/2} & \Gamma(s/2) \zeta(s) = \int_0^{\infty} 
     x^{s-1} \left(1 - \vartheta_4(0, \imath x^2)\right) dx \\ 
   & \phantom{\Gamma(s/2) \zeta(s)} = 
     \int_0^{\infty} \int_0^{\infty} 
     \frac{e^{-\frac{t^2}{2}-\pi  x^2}}{\sqrt{2\pi}}
     \frac{4 x^{s-1} \left(1 + e^{-2 \pi  x^2} \cos \left(\sqrt{2 \pi } t
     x\right)\right)}{\left(e^{-4 \pi  x^2} + 
     2 e^{-2 \pi  x^2} \cos \left(\sqrt{2 \pi } t x\right)+1\right)} dt dx 
\end{align*} 
\end{cor}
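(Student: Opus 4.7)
The plan is to derive each identity by composing two ingredients: the classical Mellin--transform representation of $\vartheta_i(0,\imath x^2)$ in terms of $\zeta(s)$ (supplying the first equality in each line), and Proposition~\ref{prop_JacobiThetaFnVariants_Thetaiq_IntReps_stmts_v3} specialized at the nome $q = e^{-\pi x^2}$ (supplying the second equality).

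For the first ingredient, I would Mellin--transform the Fourier--type series for each $\vartheta_i(0,\imath x^2)$ termwise using the elementary identity $\int_0^\infty x^{s-1}\,e^{-\pi n^2 x^2}\,dx = \tfrac12\pi^{-s/2}\Gamma(s/2)\,n^{-s}$. Summing over $n$ then gives $\pi^{-s/2}\Gamma(s/2)\zeta(s)$ for $\vartheta_3 - 1$; the prefactor $(2^s-1)$ for $\vartheta_2$ via $\sum_{n\ge 0}(2n+1)^{-s} = (1-2^{-s})\zeta(s)$; and the prefactor $(1 - 2^{1-s})$ for $1 - \vartheta_4$ via the alternating Dirichlet series $\sum_{n\ge 1}(-1)^{n-1}n^{-s} = (1-2^{1-s})\zeta(s)$. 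These classical identities are recorded in \citep[\S 20.10(i)]{NISTHB}. For the second ingredient, I would substitute $q = e^{-\pi x^2}$ into Proposition~\ref{prop_JacobiThetaFnVariants_Thetaiq_IntReps_stmts_v3}: on the principal branch $\sqrt{2\Log(q)} = \imath x\sqrt{2\pi}$, so that $\cosh(\sqrt{2\Log(q)}\,t) = \cos(\sqrt{2\pi}\,xt)$, while $q^{1/4} = e^{-\pi x^2/4}$, $q = e^{-\pi x^2}$, and $q^2 = e^{-2\pi x^2}$. Direct inspection shows that the inner integrands of the proposition then match exactly those displayed in the corollary. Multiplying by $x^{s-1}$ and applying Fubini to swap the order of integration produces each stated double integral.

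The main obstacle will be justifying Fubini and, more delicately, the range of validity of the substitution. Proposition~\ref{prop_JacobiThetaFnVariants_Thetaiq_IntReps_stmts_v3} is stated only for $|q| \in (0, 1/2)$ or $(0, 1/4)$, which translates into a lower cutoff $x > x_0 > 0$, while the outer Mellin integral is taken over all of $(0, \infty)$. For small $x$ the nome $q = e^{-\pi x^2}$ approaches $1$ and the denominator $q^{2m} \mp 2 q^m \cos(\sqrt{2\pi}\,x t) + 1$ in the inner integrand can degenerate, so one must either analytically continue in $q$ or verify absolute integrability of the joint integrand on $(0,\infty)^2$ by hand. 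The outer weight $x^{s-1}\,e^{-\pi x^2/4}$ combined with the inner Gaussian $e^{-t^2/2}/\sqrt{2\pi}$ should supply sufficient decay at infinity in both variables, and the hypothesis $\Re(s) > 2$ provides the margin required to absorb the singular behavior of the denominator near $x = 0$. Once absolute integrability is secured, Fubini applies and the three stated identities close.
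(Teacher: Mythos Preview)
Your approach is essentially identical to the paper's: cite the Mellin transform identities from \citep[\S 20.10(i)]{NISTHB} for the first equality, then substitute $q = e^{-\pi x^2}$ into Proposition~\ref{prop_JacobiThetaFnVariants_Thetaiq_IntReps_stmts_v3} for the second. You are in fact more careful than the paper, which does not comment on the restriction $|q| < 1/2$ or $|q| < 1/4$ in that proposition, nor on the Fubini justification; the paper simply asserts that the double integral results follow by applying the proposition.
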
 
\begin{proof} 
The Mellin transform integrals given in terms of the Jacobi theta function 
variants of the form 
$\vartheta_i(0 \mid \imath x^2) = \vartheta_i\left(e^{-\pi x^2}\right)$ 
are stated in the results of \citep[\S 20.10(i)]{NISTHB}. 
The double integral results follow by applying the integral 
representations for the Jacobi theta functions given by 
Proposition \ref{prop_JacobiThetaFnVariants_Thetaiq_IntReps_stmts_v3}. 
\end{proof} 

\subsubsection*{Explicit Values of Ramanujan's 
                $\varphi$--Function and $\psi$--Function} 
\label{subsubSection_Apps_SpFnsRelated_to_GeomSqSeries_RamPsiPhiFns} 
\label{def_RamPsiFn_and_RamPhiFn_series_defs} 

The series for \keywordemph{Ramanujan's functions}, 
$\varphi(q)$ and $\psi(q)$, are expanded similarly 
through the series for the classical theta functions as 
$\varphi(q) \equiv \vartheta_3(0, q)$ and 
$\psi(q) \equiv \left(2 q^{1/8}\right)^{-1} \times 
 \vartheta_2\left(0, q^{1/2}\right)$ 
\citep[\cf \S XII]{RAMANUJAN} \citep[\cf \S 16, Entry 22]{BERNDTRAMNBIII}.
These functions also define integral representations for 
classes of special explicit constants stated in the results of the next 
corollaries below. 

\begin{cor}[Integral Representations] 
\label{cor_FirstSqSeriesExs_RamPsiFn_LLambdaLambertSeries} 
Suppose that $q \in \mathbb{C}$ and $|q| < 1$. 
For these inputs of $q$, \emph{Ramanujan's functions}, 
$\varphi(q)$ and $\psi(q)$, respectively, 
have the following integral representations: 
\begin{align*} 
\varphi(q) & = 1 + \int_0^{\infty} \frac{e^{-t^2/2}}{\sqrt{2\pi}} 
     \left[\frac{4q \left(1-q^2 \cosh\left( 
     \sqrt{2 \Log(q)} t\right)\right)}{q^4-2 q^2
     \cosh\left(\sqrt{2 \Log(q)} t\right) + 1} 
     \right] dt \\ 
\psi(q) & = \int_0^{\infty} 
     \frac{2 e^{-t^2/2}}{\sqrt{2\pi}} 
     \left[\frac{\left(1-\sqrt{q} 
     \cosh\left(\sqrt{\Log(q)} t\right)\right)}{q-2 \sqrt{q} 
     \cosh\left(\sqrt{\Log(q)} t\right) + 1} 
     \right] dt. 
\end{align*} 
\end{cor}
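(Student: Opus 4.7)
The plan is to invoke Proposition \ref{prop_JacobiThetaFnVariants_Thetaiq_IntReps_stmts_v3}, together with the classical identifications $\varphi(q)\equiv\vartheta_3(0,q)$ and $\psi(q)\equiv\bigl(2q^{1/8}\bigr)^{-1}\vartheta_2\!\left(0,q^{1/2}\right)$ recorded immediately before the corollary, and then verify the arithmetic that converts the right-hand sides of the theta-function integrals into the claimed formulas.

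For $\varphi(q)$, the substitution is immediate: plug directly into the integral representation \eqref{eqn_JacobiThetaFnVariants_Thetaiq_IntReps-stmt_vT.3(q)}. The constant term $1$, the prefactor $4q$, and the integrand on the right-hand side of that formula already match the claim verbatim, so no further manipulation is required.

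For $\psi(q)$, I would start from \eqref{eqn_JacobiThetaFnVariants_Thetaiq_IntReps-stmt_vT.2(q)} and perform the substitution $q\mapsto q^{1/2}$. The prefactor becomes $4(q^{1/2})^{1/4}=4q^{1/8}$, which cancels against the normalizing factor $2q^{1/8}$ in the definition $\psi(q)=\vartheta_2(0,q^{1/2})/(2q^{1/8})$, leaving an overall coefficient of $2$. Simultaneously $\sqrt{2\Log(q^{1/2})}=\sqrt{\Log(q)}$, so the hyperbolic cosine argument simplifies from $\sqrt{2\Log(q)}\,t$ to $\sqrt{\Log(q)}\,t$, and the denominator $q^{2}-2q\cosh(\cdot)+1$ becomes $q-2\sqrt{q}\cosh(\sqrt{\Log(q)}\,t)+1$. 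The resulting expression is exactly the formula asserted for $\psi(q)$.

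The main obstacle is the apparent mismatch in domains: Proposition \ref{prop_JacobiThetaFnVariants_Thetaiq_IntReps_stmts_v3} was proved under $|q|\in(0,1/2)$ and $|q|\in(0,1/4)$, respectively, because those restrictions propagate from the condition $|cz|<1$ in Proposition \ref{prop_OrdExp_GeomSquareSeries}, yet the corollary claims validity on the full punctured disk $0<|q|<1$. I would close this gap by an analytic continuation argument: check that both sides are holomorphic in $q$ on $0<|q|<1$. On the left, $\varphi$ and $\psi$ are well known to be holomorphic there. On the right, the integrand is a rational function in $\sqrt{q}$ (or in $q$ for $\varphi$) and in $\cosh(\sqrt{\Log(q)}\,t)$; one checks that the denominator does not vanish for $|q|<1$ and that a Gaussian-in-$t$ dominant function controls the integrand uniformly on compact subsets of the punctured disk, so differentiation under the integral sign yields holomorphy of the right-hand side. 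Since the two holomorphic functions agree on the smaller subdisks where Proposition \ref{prop_JacobiThetaFnVariants_Thetaiq_IntReps_stmts_v3} applies directly, the identity theorem extends the equality to all $0<|q|<1$, completing the proof.
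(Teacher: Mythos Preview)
Your argument is correct, but it takes a slightly more circuitous route than the paper's own proof. The paper does not pass through Proposition \ref{prop_JacobiThetaFnVariants_Thetaiq_IntReps_stmts_v3} at all; instead it writes the series for $\varphi$ and $\psi$ directly as instances of the geometric square series function,
\[
\varphi(q)=1+2q\,G_{\sq}(q,q^{2},1),\qquad \psi(q)=G_{\sq}(\sqrt{q},\sqrt{q},1),
\]
and then applies Proposition \ref{prop_OrdExp_GeomSquareSeries} to each. Since the hypothesis of that proposition is simply $|cz|<1$, and here $cz=q^{2}$ and $cz=\sqrt{q}$ respectively, the integral representations are obtained immediately on the full punctured disk $0<|q|<1$ with no extension argument needed.

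Your detour through the Jacobi theta integrals is perfectly valid, but it inherits the artificially tight bounds $|q|<1/4$ and $|q|<1/2$ stated in Proposition \ref{prop_JacobiThetaFnVariants_Thetaiq_IntReps_stmts_v3}, which is what forces you into the analytic continuation step. That step is fine in principle, though you would still owe the reader the verification that the denominators in the integrands are nonvanishing for $0<|q|<1$ and all $t\geq 0$. The paper's direct route sidesteps this entirely, so it is both shorter and avoids the domain bookkeeping.
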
 
\begin{proof} 
On the stated region where $|q| < 1$, these two special functions have 
series expansions in the form of 
\eqref{eqn_GeomSqSeries_SpFnVariants_GsqpmqcFn_ModIntRep_v2} given by 
\begin{align*} 
\varphi(q) & = 
     1 + 2q \sum_{n=0}^{\infty} q^{n^2} q^{2n} \ \equiv 
     1 + 2q G_{\sq}\left(q, q^2, 1\right) \\ 
\psi(q) & = \sum_{n=0}^{\infty} \left(\sqrt{q}\right)^{n^2} 
     \left(\sqrt{q}\right)^{n} \equiv 
     G_{\sq}\left(\sqrt{q}, \sqrt{q}, 1\right). 
\end{align*} 
The new integral representations for these functions then follow from 
Proposition \ref{prop_OrdExp_GeomSquareSeries} 
applied to the series on the right-hand-side of each of the 
previous equations. 
\end{proof} 

\begin{cor}[Special Values of Ramanujan's $\varphi$-Function] 
\label{cor_VarPhi_JTheta3_Integrals_for_SpValues} 
For any $k \in \mathbb{R}^{+}$, the variant of the 
\emph{Ramanujan $\varphi$-function}, 
$\varphi\left(e^{-k\pi}\right) \equiv \vartheta_3\left(e^{-k\pi}\right)$, 
has the integral representation 
\begin{align} 
\label{eqn_RamVarPhiFn_JTheta3_SpValues_EmkPi_gen_integral} 
\varphi\left(e^{-k\pi}\right) & = 
     1 + \int_0^{\infty} \frac{e^{-t^2/2}}{\sqrt{2\pi}} \left[ 
     \frac{4 e^{k\pi} \left(e^{2k\pi} - \cos\left(\sqrt{2\pi k} t\right) 
     \right)}{e^{4k\pi} - 2 e^{2k\pi} \cos\left(\sqrt{2\pi k} t\right) + 1} 
     \right] dt. 
\end{align} 
Moreover, the 
special values of this function corresponding to the 
particular cases of $k \in \{1,2,3,5\}$ in 
\eqref{eqn_RamVarPhiFn_JTheta3_SpValues_EmkPi_gen_integral} 
have the respective integral representations 
\begin{align} 
\label{eqn_RamVarPhiFn_JTheta3_SpValues_EmkPi_kEQ1235_integrals} 
\frac{\pi^{1/4}}{\Gamma\left(\frac{3}{4}\right)} & = 
     1 + \int_0^{\infty} \frac{e^{-t^2/2}}{\sqrt{2\pi}} \left[ 
     \frac{4 e^{\pi} \left(e^{2\pi} - \cos\left(\sqrt{2\pi} t\right) 
     \right)}{e^{4\pi} - 2 e^{2\pi} \cos\left(\sqrt{2\pi} t\right) + 1} 
     \right] dt \\ 
\notag 
\frac{\pi^{1/4}}{\Gamma\left(\frac{3}{4}\right)} \cdot 
     \frac{\sqrt{\sqrt{2} + 2}}{2} & = 
     1 + \int_0^{\infty} \frac{e^{-t^2/2}}{\sqrt{2\pi}} \left[ 
     \frac{4 e^{2\pi} \left(e^{4\pi} - \cos\left(2 \sqrt{\pi} t\right) 
     \right)}{e^{8\pi} - 2 e^{4\pi} \cos\left(2 \sqrt{\pi} t\right) + 1} 
     \right] dt \\ 
\notag 
\frac{\pi^{1/4}}{\Gamma\left(\frac{3}{4}\right)} \cdot 
     \frac{\sqrt{\sqrt{3} + 1}}{2^{1/4} 3^{3/8}} & = 
     1 + \int_0^{\infty} \frac{e^{-t^2/2}}{\sqrt{2\pi}} \left[ 
     \frac{4 e^{3\pi} \left(e^{6\pi} - \cos\left(\sqrt{6 \pi} t\right) 
     \right)}{e^{12\pi} - 2 e^{6\pi} \cos\left(\sqrt{6 \pi} t\right) + 1} 
     \right] dt \\ 
\notag 
\frac{\pi^{1/4}}{\Gamma\left(\frac{3}{4}\right)} \cdot 
     \frac{\sqrt{5 + 2 \sqrt{5}}}{5^{3/4}} & = 
     1 + \int_0^{\infty} \frac{e^{-t^2/2}}{\sqrt{2\pi}} \left[ 
     \frac{4 e^{5\pi} \left(e^{10\pi} - \cos\left(\sqrt{10 \pi} t\right) 
     \right)}{e^{20\pi} - 2 e^{10\pi} \cos\left(\sqrt{10 \pi} t\right) + 1} 
     \right] dt. 
\end{align} 
\end{cor}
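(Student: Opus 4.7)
The plan is to derive the general formula \eqref{eqn_RamVarPhiFn_JTheta3_SpValues_EmkPi_gen_integral} as a direct specialization of the integral representation for $\varphi(q)$ already proved in Corollary \ref{cor_FirstSqSeriesExs_RamPsiFn_LLambdaLambertSeries}, and then to combine this with known classical closed-form evaluations of $\varphi(e^{-k\pi})$ for $k=1,2,3,5$ to obtain the four tabulated identities. No new transformation theory is required; the proof is essentially a substitution followed by citation of known special values.

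First, I would set $q := e^{-k\pi}$ in the integral for $\varphi(q)$ from Corollary \ref{cor_FirstSqSeriesExs_RamPsiFn_LLambdaLambertSeries}. Since $|e^{-k\pi}| < 1$ for any $k \in \mathbb{R}^{+}$, the hypothesis is satisfied. The branch convention gives $\Log(e^{-k\pi}) = -k\pi$, so $\sqrt{2\Log(q)} = \imath \sqrt{2\pi k}$, and using the elementary identity $\cosh(\imath x) = \cos(x)$ the factors $\cosh(\sqrt{2\Log(q)}\, t)$ in the integrand collapse to $\cos(\sqrt{2\pi k}\, t)$. Substituting $q^2 = e^{-2k\pi}$ and $q^4 = e^{-4k\pi}$ and clearing the negative exponentials in numerator and denominator (multiplying through by $e^{4k\pi}$) yields precisely the integrand displayed in \eqref{eqn_RamVarPhiFn_JTheta3_SpValues_EmkPi_gen_integral}, establishing the general formula.

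Next, to obtain the four specific evaluations, I would invoke the known classical closed forms
\begin{align*}
\varphi(e^{-\pi}) &= \frac{\pi^{1/4}}{\Gamma\!\left(\tfrac{3}{4}\right)}, \qquad
\varphi(e^{-2\pi}) = \frac{\pi^{1/4}}{\Gamma\!\left(\tfrac{3}{4}\right)} \cdot \frac{\sqrt{\sqrt{2}+2}}{2}, \\
\varphi(e^{-3\pi}) &= \frac{\pi^{1/4}}{\Gamma\!\left(\tfrac{3}{4}\right)} \cdot \frac{\sqrt{\sqrt{3}+1}}{2^{1/4} 3^{3/8}}, \qquad
\varphi(e^{-5\pi}) = \frac{\pi^{1/4}}{\Gamma\!\left(\tfrac{3}{4}\right)} \cdot \frac{\sqrt{5+2\sqrt{5}}}{5^{3/4}},
\end{align*}
which are standard entries from the literature on explicit theta-constant values cited in the introduction (\citep{THETAFN-IDENTS-EXPLICIT-FORMULAS,RAMTHETAFNS-EXPLICIT-VALUES}; see also the Berndt notebooks). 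Equating each of these with the right-hand side of \eqref{eqn_RamVarPhiFn_JTheta3_SpValues_EmkPi_gen_integral} evaluated at the corresponding $k \in \{1,2,3,5\}$ gives the four displayed identities of \eqref{eqn_RamVarPhiFn_JTheta3_SpValues_EmkPi_kEQ1235_integrals}.

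The only genuinely non-routine step is the choice of branch for $\sqrt{2\Log(q)}$ when $q$ is real and negative-logarithmic: one must justify that the integral representation from Corollary \ref{cor_FirstSqSeriesExs_RamPsiFn_LLambdaLambertSeries} remains valid (and yields a real value) after the imaginary substitution, which follows because the integrand is an even function of $\sqrt{2\Log(q)}$ (it depends only through $\cosh$), so replacing that quantity by $\imath\sqrt{2\pi k}$ gives a real $\cos$, and the resulting rational function in $\cos(\sqrt{2\pi k}\, t)$ is integrable against $e^{-t^2/2}$ on $(0,\infty)$. Everything else is bookkeeping — simplifying $q^{n}$ for small integer $n$ and collecting terms — so no computational obstacle of substance remains.
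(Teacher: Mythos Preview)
Your proposal is correct and follows essentially the same approach as the paper's own proof: substitute $q=e^{-k\pi}$ into the integral for $\varphi(q)$ from Corollary~\ref{cor_FirstSqSeriesExs_RamPsiFn_LLambdaLambertSeries}, then cite the known explicit evaluations of $\varphi(e^{-k\pi})$ from \citep{THETAFN-IDENTS-EXPLICIT-FORMULAS}. Your write-up is more explicit about the $\cosh(\imath x)=\cos(x)$ step and the branch issue for $\sqrt{2\Log(q)}$, but these are just elaborations of what the paper leaves implicit.
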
 
\begin{proof} 
The first integral in 
\eqref{eqn_RamVarPhiFn_JTheta3_SpValues_EmkPi_gen_integral} is obtained from 
Corollary \ref{cor_FirstSqSeriesExs_RamPsiFn_LLambdaLambertSeries} 
applied to the functions 
$\varphi\left(q\right) \equiv \vartheta_3\left(q\right)$ at 
$q := e^{-k\pi}$. 
The constants on the left-hand-side of the integral equations in 
\eqref{eqn_RamVarPhiFn_JTheta3_SpValues_EmkPi_kEQ1235_integrals} 
correspond to the known values of Ramanujan's function, 
$\varphi\left(e^{-k\pi}\right)$, over the particular inputs of 
$k \in \{1,2,3,5\}$, in respective order. 
These explicit formulas for the values of 
$\varphi\left(e^{-k\pi}\right)$ 
are established by Theorem 5.5 of \citep{THETAFN-IDENTS-EXPLICIT-FORMULAS}. 
\end{proof} 

The special cases of the right-hand-side integrals 
in the corollary 
are verified numerically to match the constant values cited by 
each of the equations in 
\eqref{eqn_RamVarPhiFn_JTheta3_SpValues_EmkPi_kEQ1235_integrals} in 
\citep{SQS-SUMMARYNB-REF}. 
Still other integral formulas for the values of the function, 
$\varphi\left(e^{-k\pi}\right)$, are known in terms of formulas 
involving powers of positive rational inputs to the gamma function, 
 in the form of \eqref{eqn_RamVarPhiFn_JTheta3_SpValues_EmkPi_gen_integral}, 
for example, when $k \in \{\sqrt{2},\sqrt{3},\sqrt{6}\}$. 

\begin{cor}[Explicit Values of Ramanujan's $\psi$-Function] 
\label{cor_RamPsi_JTheta2_Integrals_for_SpValues} 
For any $k \in \mathbb{R}^{+}$, the forms of the 
\emph{Ramanujan $\psi$--function}, 
$\psi\left(e^{-k\pi}\right) \equiv 
 \frac{1}{2} e^{k\pi / 8}\vartheta_2\left(e^{-k\pi / 2}\right)$, 
have the integral representation 
\begin{align} 
\label{eqn_RamVarPsiFn_JTheta2_SpValues_EmkPi_gen_integral} 
\psi\left(e^{-k\pi}\right) & = 
     \int_0^{\infty} \frac{e^{-t^2/2}}{\sqrt{2\pi}} \left[ 
     \frac{\cos\left(\sqrt{k \pi} t\right) - e^{k\pi / 2}}{ 
     \cos\left(\sqrt{k \pi} t\right) - \cosh\left(\frac{k\pi}{2}\right)} 
     \right] dt. 
\end{align} 
The explicit values of this function corresponding to the 
choices of $k \in \{1,2,1/2\}$ in 
\eqref{eqn_RamVarPsiFn_JTheta2_SpValues_EmkPi_gen_integral} 
have the following respective integral representations: 
\begin{align} 
\label{eqn_RamVarPsiFn_JTheta2_SpValues_EmkPi_kEQ1235_integrals} 
\frac{\pi^{1/4}}{\Gamma\left(\frac{3}{4}\right)} \cdot 
     \frac{e^{\pi / 8}}{2^{5/8}} & = 
     \int_0^{\infty} \frac{e^{-t^2/2}}{\sqrt{2\pi}} \left[ 
     \frac{\cos\left(\sqrt{\pi} t\right) - e^{\pi / 2}}{ 
     \cos\left(\sqrt{\pi} t\right) - \cosh\left(\frac{\pi}{2}\right)} 
     \right] dt \\ 
\notag 
\frac{\pi^{1/4}}{\Gamma\left(\frac{3}{4}\right)} \cdot 
     \frac{e^{\pi / 4}}{2^{5/4}} & = 
     \int_0^{\infty} \frac{e^{-t^2/2}}{\sqrt{2\pi}} \left[ 
     \frac{\cos\left(\sqrt{2 \pi} t\right) - e^{\pi}}{ 
     \cos\left(\sqrt{2 \pi} t\right) - \cosh\left(\pi\right)} 
     \right] dt \\ 
\notag 
\frac{\pi^{1/4}}{\Gamma\left(\frac{3}{4}\right)} \cdot 
     \frac{\left(\sqrt{2} + 1\right)^{1/4} e^{\pi / 16}}{2^{7/16}} & = 
     \int_0^{\infty} \frac{e^{-t^2/2}}{\sqrt{2\pi}} \left[ 
     \frac{\cos\left(\sqrt{\frac{\pi}{2}} t\right) - e^{\pi / 4}}{ 
     \cos\left(\sqrt{\frac{\pi}{2}} t\right) - \cosh\left(\frac{\pi}{4}\right)} 
     \right] dt. 
\end{align} 
\end{cor}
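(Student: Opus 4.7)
The plan is to specialize the integral representation for $\psi(q)$ established in Corollary \ref{cor_FirstSqSeriesExs_RamPsiFn_LLambdaLambertSeries} to the input $q := e^{-k\pi}$, carry out the algebraic simplification to obtain the claimed form, and then quote the classical closed-form evaluations of $\psi\!\left(e^{-k\pi}\right)$ at $k \in \{1, 2, 1/2\}$ for the three specific integral identities.

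First I would substitute $q = e^{-k\pi}$ directly into
\begin{equation*}
\psi(q) = \int_0^{\infty} \frac{2 e^{-t^2/2}}{\sqrt{2\pi}}
\left[\frac{1 - \sqrt{q}\,\cosh\!\left(\sqrt{\Log(q)}\, t\right)}{q - 2\sqrt{q}\,\cosh\!\left(\sqrt{\Log(q)}\, t\right) + 1}\right] dt,
\end{equation*}
so that $\sqrt{q} = e^{-k\pi/2}$ and $\sqrt{\Log(q)} = \imath\sqrt{k\pi}$. Using $\cosh(\imath x) = \cos(x)$, the integrand collapses to $2\bigl(1 - e^{-k\pi/2}\cos(\sqrt{k\pi}\,t)\bigr) / \bigl(e^{-k\pi} - 2 e^{-k\pi/2}\cos(\sqrt{k\pi}\,t) + 1\bigr)$. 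I would then multiply numerator and denominator by $e^{k\pi/2}$ and invoke the identity $e^{k\pi/2} + e^{-k\pi/2} = 2\cosh(k\pi/2)$, which reduces the bracketed expression to the claimed form
\begin{equation*}
\frac{\cos\!\left(\sqrt{k\pi}\, t\right) - e^{k\pi/2}}{\cos\!\left(\sqrt{k\pi}\, t\right) - \cosh\!\left(\frac{k\pi}{2}\right)}.
\end{equation*}

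For the three specific evaluations in \eqref{eqn_RamVarPsiFn_JTheta2_SpValues_EmkPi_kEQ1235_integrals}, I would plug $k \in \{1, 2, 1/2\}$ into the general formula, so that the integrals evaluate to the closed-form constants via the classical explicit values
\begin{align*}
\psi\!\left(e^{-\pi}\right) &= \frac{\pi^{1/4} e^{\pi/8}}{2^{5/8}\,\Gamma(3/4)}, &
\psi\!\left(e^{-2\pi}\right) &= \frac{\pi^{1/4} e^{\pi/4}}{2^{5/4}\,\Gamma(3/4)}, &
\psi\!\left(e^{-\pi/2}\right) &= \frac{\pi^{1/4} (\sqrt{2}+1)^{1/4} e^{\pi/16}}{2^{7/16}\,\Gamma(3/4)},
\end{align*}
each of which follows from the relation $\psi(q) = (2 q^{1/8})^{-1} \vartheta_2(0, q^{1/2})$ together with the tabulated special values of Jacobi theta constants (and is also derivable from Theorem 5.5 of the reference cited for the preceding corollary, upon applying Ramanujan's modular transformation to recast $\vartheta_2$ values at these moduli).

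The main obstacle is essentially bookkeeping: one must be careful with the branch of $\sqrt{\Log(q)}$ when $q \in (0,1)$, verifying that the imaginary nature of $\sqrt{\Log(q)}$ is absorbed consistently by $\cosh(\imath x) = \cos(x)$ in both numerator and denominator so that the resulting integrand is manifestly real-valued. Beyond this, the first part is an entirely algebraic reduction of the integrand previously established, and the second part is a direct substitution into the closed-form values which are well documented in the theta-function literature.
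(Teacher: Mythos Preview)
Your proposal is correct and follows essentially the same approach as the paper: substitute $q := e^{-k\pi}$ into the integral representation of Corollary \ref{cor_FirstSqSeriesExs_RamPsiFn_LLambdaLambertSeries}, simplify, and then quote the known explicit values of $\psi(e^{-k\pi})$. The paper's proof is terser and cites Theorems 6.8 and 6.9 of \cite{RAMTHETAFNS-EXPLICIT-VALUES} for the three constants rather than the reference you mention, but the argument is the same; your write-up simply makes the algebraic reduction of the integrand explicit.
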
 
\begin{proof} 
The integral representation in 
\eqref{eqn_RamVarPsiFn_JTheta2_SpValues_EmkPi_gen_integral} is obtained from 
Corollary \ref{cor_FirstSqSeriesExs_RamPsiFn_LLambdaLambertSeries} 
at the inputs of $q := e^{-k\pi}$. 
The explicit formulas for constants on the 
left-hand-side of the integral equations in 
\eqref{eqn_RamVarPsiFn_JTheta2_SpValues_EmkPi_kEQ1235_integrals} 
are the known values of Ramanujan's function, $\psi\left(e^{-k\pi}\right)$, 
from Theorems 6.8 and 6.9 of the reference \citep{RAMTHETAFNS-EXPLICIT-VALUES} 
corresponding to these particular values of $k \in \mathbb{Q}^{+}$. 
\end{proof} 

\subsection{Integral Formulas for Sequences Involving Powers of Linear Polynomials} 
\label{subsubSection_SqSeries_for_PolyPowers_DerivsOfJThetaFns} 

The primary applications motivating the next results proved in this 
section correspond to the higher-order derivatives of the Jacobi theta functions 
defined by $d^{(j)} \vartheta_i(u, q) / du^{(j)} \mid_{u=u_0}$ 
for some prescribed setting of $u_0$, 
including the special cases of the theta functions from the 
results for these series from 
\sref{subsubSection_Apps_SpCases_of_the_JacobiThetaFns} 
where $u_0 = 0$. 
The square series integral representations for polynomial powers, and then 
more generally for any fixed linear polynomial multiple, of the 
geometric square series are easily obtained by applying 
Proposition \ref{prop_S2_OGFFz_to_nPowmOGFFz_stmt}
to the forms of $G_{\sq}(q, c, z)$ with respect to the underlying series in $z$. 

\subsubsection{Initial Results} 
\label{subsubSection_Cors_for_PolyPowersOf_SpCase_Geometric_Series} 

\begin{prop}[Integrals for Polynomial Powers] 
\label{prop_IntsPolyPows_OGF_pow_int-stmts_form_v1} 
Suppose that $m \in \mathbb{N}$ and $q, c, z \in \mathbb{C}$ 
are defined such that $|q| \in (0, 1)$ and where $|cz| < 1$. 
Then the square series function, $\vartheta_{0,m}(q, c, z)$, 
defined by the series in 
\eqref{eqn_Theta_dm_cqz_ShiftedPolyPowers_GeomSeries-Based_series_stmt_v1} 
satisfies an integral formula given by 
\begin{align} 
\label{eqn_prop_IntsPolyPows_OGF_pow_int_stmt_v1} 
 & \vartheta_{0,m}(q, c, z) = 
     \int_0^{\infty} \frac{e^{-t^2/2}}{\sqrt{2\pi}} \left[ 
     \sum_{k=0}^{m} \gkpSII{m}{k} 
     \frac{(cz)^k k! \times \NumFn_k\left(\sqrt{2 \Log(q)} t, cz\right)}{ 
     \left(c^2z^2 - 2cz \cosh\left(\sqrt{2 \Log(q)} t\right) + 1 
     \right)^{k+1}} 
     \right] dt. 
\end{align} 
The numerator terms 
of the integrands in the previous equation denote the 
polynomials $\NumFn_k(w, z) \in \mathbb{C}\lbrack z \rbrack$ of 
degree $k+1$ in $z$ defined by the following equation 
for non-negative integers $k \leq m$: 
\begin{align} 
\notag 
\NumFn_k(w, cz) & := 
     \sum_{b = \pm 1} e^{-bk w} \times 
     \left(1 - e^{b w} cz\right)^{k+1}. 
\end{align} 
\end{prop}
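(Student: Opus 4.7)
The plan is to apply Theorem \ref{thm_SqSeries_OGF_Transforms} to the sequence $f_n = n^m c^n$, using the explicit OGF derivatives supplied by Proposition \ref{prop_S2_OGFFz_to_nPowmOGFFz_stmt} and Table \ref{table_fnSeq_OGFs_and_jthDerivs_listings-stmts_v1}, and then to simplify the resulting integrand by symmetric combination of the two $b = \pm 1$ summands. Since $\vartheta_{0,m}(q,c,z) = \sum_{n\geq 0} n^m q^{n^2} c^n z^n$ corresponds to the square series generating function of the sequence $\langle n^m c^n \rangle$, Proposition \ref{prop_S2_OGFFz_to_nPowmOGFFz_stmt} together with row \eqref{eqn_Table_SpCaseSequenceOGFFormulas-GeomSeriesBased_v1} of Table \ref{table_fnSeq_OGFs_and_jthDerivs_listings-stmts_v1} gives the closed form OGF
\begin{equation*}
F_f(z) \;=\; \sum_{j=0}^{m} \gkpSII{m}{j}\, \frac{(cz)^{j}\, j!}{(1-cz)^{j+1}},
\end{equation*}
valid on $|cz|<1$.

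Next I would substitute this $F_f$ into the second transformation identity \eqref{eqn_MainThmStmt_OrdSqSeriesIntReps_v2} of Theorem \ref{thm_SqSeries_OGF_Transforms}. Writing $w \equiv t\sqrt{2\Log(q)}$ and interchanging the finite sum over $j$ with both the integration in $t$ and the finite sum over $b = \pm 1$ (this is justified since the sum over $j$ is a \emph{finite} sum of terms each analytic in $z$ on $|cz|<1$), one obtains
\begin{equation*}
\vartheta_{0,m}(q,c,z) \;=\; \int_{0}^{\infty}\frac{e^{-t^{2}/2}}{\sqrt{2\pi}} \sum_{j=0}^{m} \gkpSII{m}{j}\, j!\, (cz)^{j}\, \sum_{b=\pm 1} \frac{e^{bjw}}{\bigl(1-e^{bw}cz\bigr)^{j+1}}\, dt.
\end{equation*}

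The crux of the argument is to collapse the inner two-term sum into the claimed form. I would put the two summands over a common denominator. Using the elementary factorization
\begin{equation*}
\bigl(1-e^{w}cz\bigr)\bigl(1-e^{-w}cz\bigr) \;=\; c^{2}z^{2} - 2cz\cosh(w) + 1,
\end{equation*}
the common denominator is $\bigl(c^{2}z^{2} - 2cz\cosh(w) + 1\bigr)^{j+1}$, which matches the denominator of the target formula. Multiplying the $b$-summand by the missing $(1-e^{-b w}cz)^{j+1}$ factor, and then reindexing $b \mapsto -b$ in the resulting numerator, yields
\begin{equation*}
\sum_{b=\pm 1} \frac{e^{bjw}}{(1-e^{bw}cz)^{j+1}} \;=\; \frac{\sum_{b=\pm 1} e^{-bjw}(1-e^{bw}cz)^{j+1}}{\bigl(c^{2}z^{2} - 2cz\cosh(w) + 1\bigr)^{j+1}} \;=\; \frac{\NumFn_{j}(w, cz)}{\bigl(c^{2}z^{2} - 2cz\cosh(w) + 1\bigr)^{j+1}},
\end{equation*}
which, after reinserting $w = t\sqrt{2\Log(q)}$, produces exactly \eqref{eqn_prop_IntsPolyPows_OGF_pow_int_stmt_v1}.

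The only step requiring genuine care is the combinatorial algebra in collapsing the $b = \pm 1$ pair into the symmetrized polynomial $\NumFn_k(w,cz)$; everything else is a direct application of previously proven results. Convergence and legitimacy of the sum-integral interchange are not obstacles here because the sum over $j$ is finite (bounded by $m$), and for each $j \leq m$ the integrand is dominated in $t$ by an integrable Gaussian envelope on the parameter region $|q|\in (0,1)$, $|cz|<1$.
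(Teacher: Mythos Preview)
Your proof is correct and follows essentially the same route as the paper. The only cosmetic difference is in the order of operations: the paper first writes the integral representation for $\vartheta_{0,0}=G_{\sq}$, differentiates it $j$ times termwise in $z$ under the integral sign, and then applies Proposition~\ref{prop_S2_OGFFz_to_nPowmOGFFz_stmt} to assemble $\vartheta_{0,m}$ from those derivatives, whereas you first build the OGF of $\langle n^m c^n\rangle$ via Proposition~\ref{prop_S2_OGFFz_to_nPowmOGFFz_stmt} and then feed it into \eqref{eqn_MainThmStmt_OrdSqSeriesIntReps_v2}; both routes arrive at the identical intermediate sum $\sum_{j}\gkpSII{m}{j} j!(cz)^j\sum_{b=\pm1}e^{bjw}/(1-e^{bw}cz)^{j+1}$ and then collapse the $b=\pm1$ pair over the common denominator exactly as you do.
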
 
\begin{proof} 
\label{cor_OrdExp_GeomSquareSeries_IntRep_for_PartialFraction_exps_stmts_v2} 
\label{example_OrdExp_GeomSquareSeries} 
By the proof of Theorem \ref{thm_SqSeries_OGF_Transforms}, 
the integral representation of the series in the first of the next 
equations in \eqref{eqn_GeomSqSeries_Gqcz_partial_frac_exp_IntRep_stmt_v2} 
can be differentiated termwise with respect to $z$ to arrive at the 
$j^{th}$ derivative formulas given in 
\eqref{eqn_GeomSqSeries_Gqcz_ithDerivs_partial_frac_exp_IntRep_stmt_v3}. 
\begin{align} 
\label{eqn_GeomSqSeries_Gqcz_partial_frac_exp_IntRep_stmt_v2} 
\tag{i} 
\vartheta_{0,0}(q, c, z) & = \int_0^{\infty} \frac{e^{-t^2/2}}{\sqrt{2\pi}} \left[ 
     \frac{1}{\left(1 - e^{\sqrt{2 \Log(q)} t} cz\right)} + 
     \frac{1}{\left(1 - e^{-\sqrt{2 \Log(q)} t} cz\right)} 
     \right] dt \\ 
\label{eqn_GeomSqSeries_Gqcz_ithDerivs_partial_frac_exp_IntRep_stmt_v3} 
\vartheta_{0,m}^{(j)}(q, c, z) & = 
     \int_0^{\infty} \frac{e^{-t^2/2}}{\sqrt{2\pi}} \left[ 
     \frac{e^{j \sqrt{2 \Log(q)} t} c^j \cdot j!}{ 
     \left(1 - e^{\sqrt{2 \Log(q)} t} cz\right)^{j+1}} + 
     \frac{e^{-j \sqrt{2 \Log(q)} t} c^j \cdot j!}{ 
     \left(1 - e^{-\sqrt{2 \Log(q)} t} cz\right)^{j+1}} 
     \right] dt \\ 
\notag 
     & = 
     \int_0^{\infty} \frac{e^{-t^2/2}}{\sqrt{2\pi}} \left[ 
     \frac{c^j j! \sum_{b = \pm 1} e^{bj \sqrt{2 \Log(q)} t} \times 
     \left(1 - e^{-b\sqrt{2 \Log(q)} t} cz\right)^{j+1}}{ 
     \left(c^2z^2 - 2cz \cosh\left(\sqrt{2 \Log(q)} t\right) + 1\right)^{j+1}} 
     \right] dt. 
\end{align} 
We may then apply 
Proposition \ref{prop_S2_OGFFz_to_nPowmOGFFz_stmt} and the 
definition of the numerator terms, $\NumFn_k(w, cz)$, to the previous 
equations to obtain the result stated in 
\eqref{eqn_prop_IntsPolyPows_OGF_pow_int_stmt_v1}. 
\end{proof} 

\begin{table}[h] 

\begin{center} 

\setlength{\fboxrule}{1.25pt} 
\setlength{\fboxsep}{4pt} 
\fbox{\begin{minipage}{\textwidth} 

     \tabulinestyle{2pt OTblBC} 
     \taburulecolor |OTblBC|{ITblBC} 
     \arrayrulewidth=1.5pt \doublerulesep=0pt \tabulinesep=3pt

     \begin{tabu} to \TblW  {|c|c|X[l]|}

     \taburowcolors 2{Plum!32!White .. Plum!10!White} 
     \hline 
     \multicolumn3{|l|}{ 
          \bfseries\normalsize{Scaled Formulas for the Numerator Functions} 
     } \\ 
     \hline\hline  
     \rowfont{\bfseries\small\mathversion{bold}} 
     \textbf{Eq.} & $\mathbf{k}$ & 
     \textbf{Scaled Numerator Function} 
     ($\frac{1}{2} \times \NumFn_k(s, y)$) \\ 
     \taburowcolors 2{Plum!0!White .. Plum!0!White} 
     \hline\hline

     \LabelTableEqn{eqn_Table-ScaledPolyPowerSnumFns-v_kEQ0} & 
     $0$ & 
     $\phantom{-} 1 -y \cosh(s)$ \\ 
     \hline 
     
     \LabelTableEqn{eqn_Table-ScaledPolyPowerSnumFns-v_kEQ01} & 
     $1$ & 
     $-2y + (y^2+1) \cosh(s)$ \\ 
     \hline 
     
     \LabelTableEqn{eqn_Table-ScaledPolyPowerSnumFns-v_kEQ02} & 
     $2$ & 
     $\phantom{-} 3y^2 - (y^3+3y) \cosh(s) + \cosh(2s)$ \\ 
     \hline 
     
     \LabelTableEqn{eqn_Table-ScaledPolyPowerSnumFns-v_kEQ03} & 
     $3$ & 
     $-4 y^3 + (y^4+6y^2) \cosh(s) - 4y \cosh(2s) + \cosh(3s)$ \\ 
     \hline 
     
     \LabelTableEqn{eqn_Table-ScaledPolyPowerSnumFns-v_kEQ04} & 
     $4$ & 
     $\phantom{-} 5 y^4 - (y^5+10y^3) \cosh(s) + 10 y^2 \cosh(2s) 
      -5y \cosh(3s) + \cosh(4s)$ \\ 

     \hline\hline 

     \end{tabu} 

     \setcaption{table}{ 
          Several Formulas for the Numerator Functions, $\NumFn_k(s, y)$}{ 
          Explicit formulas for the numerator functions, 
          $\NumFn_k(s, y)$, from the definition given in 
          Proposition \ref{prop_IntsPolyPows_OGF_pow_int-stmts_form_v1} 
          over the first several special cases of $k \in [0, 4]$. 
          } 
     \label{table_SeveralFormulas_for_NumFns_Numksy} 

\end{minipage}} 
\end{center} 

\end{table}

Proposition \ref{prop_IntsPolyPows_OGF_pow_int-stmts_form_v1} 
provides an integral representation of the polynomial multiples for any 
geometric series sequences of the form $f_n := p(n) \times c^n$ 
corresponding to any fixed $p(n) \in \mathbb{C}\lbrack n \rbrack$. 
Notice that 
for polynomials defined as integral powers of the form 
$p(n) := (\alpha n+\beta)^{m}$ for some fixed $m \in \mathbb{Z}^{+}$, 
for example, as in the series for the $m^{th}$ derivatives of the 
theta functions, $\vartheta_i^{(m)}(q)$, the 
repeated terms over the index $k \in \{0, 1, \ldots, m\}$ 
in the inner sums from 
\eqref{eqn_prop_IntsPolyPows_OGF_pow_int_stmt_v1} 
may be simplified even further to obtain the next simplified formulas in the 
slightly more general results for these series expansions. 
Table \ref{table_SeveralFormulas_for_NumFns_Numksy} 
lists several simplified expansions of the functions, $\NumFn_k(w, z)$, 
defined in the statement of 
Proposition \ref{prop_IntsPolyPows_OGF_pow_int-stmts_form_v1}. 

\begin{prop} 
\label{prop_IntsPolyPowsMultiples_OGF_pow_int-stmts_form_v2} 
Suppose that $m \in \mathbb{N}$ and $\alpha, \beta, q, c, z \in \mathbb{C}$ 
are defined such that $|q| \in (0, 1)$ and such that $|cz| < 1$. 
The modified square series from 
\eqref{eqn_Theta_dm_cqz_ShiftedPolyMultiples_GeomSeries-Based_series_stmt_v2} 
satisfies an integral representation of the form 
\begin{align} 
\label{eqn_prop_IntsPolyPowsMults_OGF_pow_int_stmt_v2} 
 & \vartheta_{0,m}(\alpha, \beta; q, c, z) \\ \notag & = 
     \int_0^{\infty} \frac{e^{-t^2/2}}{\sqrt{2\pi}} \left[ 
     \sum_{0 \leq i \leq k \leq m} 
     \binom{k}{i} 
     \frac{(-1)^{k-i} (\alpha i+\beta)^{m} (cz)^{k} 
     \NumFn_k\left(\sqrt{2 \Log(q)} t, cz\right)}{ 
     \left(c^2 z^2 - 2cz \cosh\left(\sqrt{2 \Log(q)} t\right) + 
     1\right)^{k+1}} 
     \right] dt. 
\end{align} 
\end{prop}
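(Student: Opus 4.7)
The plan is to reduce this proposition to Proposition \ref{prop_IntsPolyPows_OGF_pow_int-stmts_form_v1} by replacing the Stirling-based expansion of $n^m$ into falling factorials with a Newton forward-difference expansion of the polynomial $(\alpha n + \beta)^m$. Since $p(n) := (\alpha n + \beta)^m$ has degree $m$ in $n$, Newton's forward-difference formula supplies the polynomial identity
\begin{equation*}
(\alpha n + \beta)^{m} = \sum_{k=0}^{m} \binom{n}{k} \sum_{i=0}^{k} \binom{k}{i} (-1)^{k-i} (\alpha i + \beta)^{m},
\end{equation*}
in which the inner $i$-sum is precisely the $k$-th finite difference $\Delta^{k} p(0)$ that appears as the coefficient in the target formula \eqref{eqn_prop_IntsPolyPowsMults_OGF_pow_int_stmt_v2}.

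Substituting this expansion into the defining series of $\vartheta_{0,m}(\alpha, \beta; q, c, z)$ and interchanging the finite $k$-sum with the $n$-sum, the resulting inner series over $n$ becomes
\begin{equation*}
\sum_{n \geq 0} \binom{n}{k} q^{n^{2}} c^{n} z^{n} = \frac{z^{k}}{k!} \, G_{\sq}^{(k)}(q, c, z),
\end{equation*}
using $\binom{n}{k} z^{n} = \tfrac{z^{k}}{k!} \, \tfrac{d^{k}}{dz^{k}} z^{n}$ together with termwise differentiability of $G_{\sq}(q, c, z)$ on $|cz| < 1$. The integral form of $z^{k} G_{\sq}^{(k)}(q, c, z)$, including its $k!$ factor and its numerator already packaged as $(cz)^{k} k! \, \NumFn_{k}(\sqrt{2 \Log(q)} \, t, cz)$ over the common denominator $(c^{2} z^{2} - 2 c z \cosh(\sqrt{2 \Log(q)} \, t) + 1)^{k+1}$, is exactly what is recorded in \eqref{eqn_GeomSqSeries_Gqcz_ithDerivs_partial_frac_exp_IntRep_stmt_v3} during the proof of Proposition \ref{prop_IntsPolyPows_OGF_pow_int-stmts_form_v1}.

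The last step is to pull the (finite) double sum over $0 \leq i \leq k \leq m$ inside the integral, which is immediate because both sums are finite. The $k!$ inherited from the derivative formula cancels the $k!$ coming from the $\binom{n}{k}$ denominator, producing exactly \eqref{eqn_prop_IntsPolyPowsMults_OGF_pow_int_stmt_v2}. The only conceptual choice in the argument is the use of the Newton expansion in place of the Stirling expansion; everything else is bookkeeping, and no genuine obstacle arises since the analytic justification for termwise differentiation of $G_{\sq}(q, c, z)$ on $|cz| < 1$ was already established upstream in Theorem \ref{thm_SqSeries_OGF_Transforms} and Proposition \ref{prop_IntsPolyPows_OGF_pow_int-stmts_form_v1}.
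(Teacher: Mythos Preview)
Your argument is correct and reaches the same endpoint as the paper, but by a shorter path. The paper first expands $(\alpha n+\beta)^m$ by the binomial theorem into powers of $n$, applies Proposition~\ref{prop_S2_OGFFz_to_nPowmOGFFz_stmt} to convert each $n^j$ into a Stirling-number sum over $z^k F^{(k)}(z)$, and only then substitutes the explicit alternating formula $\gkpSII{j}{k} = \frac{1}{k!}\sum_{i}\binom{k}{i}(-1)^{k-i} i^{j}$ and recombines with the binomial theorem to produce $(\alpha i+\beta)^m$. Your Newton forward-difference expansion short-circuits this detour: it writes $(\alpha n+\beta)^m$ directly in the falling-factorial basis $\binom{n}{k}$ with coefficients $\Delta^k p(0)=\sum_i\binom{k}{i}(-1)^{k-i}(\alpha i+\beta)^m$, so the target coefficients appear immediately. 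Both proofs then feed into the same derivative formula \eqref{eqn_GeomSqSeries_Gqcz_ithDerivs_partial_frac_exp_IntRep_stmt_v3} from the proof of Proposition~\ref{prop_IntsPolyPows_OGF_pow_int-stmts_form_v1}. The paper's route has the virtue of reusing the Stirling-number machinery already set up in Proposition~\ref{prop_S2_OGFFz_to_nPowmOGFFz_stmt}; yours is more economical because it never introduces the Stirling numbers only to eliminate them two lines later.
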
 
\begin{proof} 
The proof is similar to the proof of 
Proposition \ref{prop_IntsPolyPows_OGF_pow_int-stmts_form_v1}. 
We employ the same notation and $j^{th}$ derivative expansions from the 
proof of the first proposition. 
To simplify notation, for $j \geq 0$ let 
\begin{align*} 
N_j(t, q, cz) = \frac{(cz)^j \NumFn_j\left(\sqrt{2 \Log(q)} t, cz\right)}{ 
     \left(c^2 z^2 - 2cz \cosh\left(\sqrt{2 \Log(q)} t\right) + 
     1\right)^{j+1}}. 
\end{align*}
Since the Stirling numbers of the second kind are expanded by the 
finite sum formula \citep[\S 26.8(i)]{NISTHB} 
\begin{equation*} 
\gkpSII{i}{k} = \frac{1}{k!} \sum_{j=0}^{k} \binom{k}{j} (-1)^{k-j} j^i, 
\end{equation*} 
by expanding out the polynomial powers of $(\alpha n+\beta)^m$ 
according to the binomial theorem we arrive at the following expansions of 
Proposition \ref{prop_S2_OGFFz_to_nPowmOGFFz_stmt}: 
\begin{align*} 
\vartheta_{0,m}(\alpha, \beta; q, c, z) & = 
     \int_0^{\infty} \frac{e^{-t^2/2}}{\sqrt{2\pi}} \times 
     \sum_{j=0}^{m} \sum_{k=0}^{j} 
     \binom{m}{j} \gkpSII{j}{k} \alpha^j \beta^{m-j} \times k! N_k(t, q, cz) dt \\ 
     & = 
     \int_0^{\infty} \frac{e^{-t^2/2}}{\sqrt{2\pi}} \times 
     \sum_{j=0}^{m} \sum_{k=0}^{j} \sum_{i=0}^{k} 
     \binom{m}{j} \binom{k}{i} (\alpha i)^{j} \beta^{m-j} (-1)^{k-i} 
     N_k(t, q, cz) dt \\ 
     & = 
     \int_0^{\infty} \frac{e^{-t^2/2}}{\sqrt{2\pi}} \times 
     \sum_{k=0}^{j} \sum_{i=0}^{k} 
     \binom{k}{i} (\alpha i + \beta)^{m} (-1)^{k-i} N_k(t, q, cz) dt. 
     \qedhere 
\end{align*} 
\end{proof} 

\subsubsection{Higher-Order Derivatives of the Jacobi Theta Functions} 

\begin{example}[Special Case Derivative Series] 
\label{example_JThetaFns_iEqOne_Series_for_SpCaseDerivs} 
An integral representation for the 
special case of the first derivative of the theta function, 
$\vartheta_1^{\prime}(q) \equiv 2q^{1/4} \cdot Q_{2,1}(q, q, -1)$, 
expanded as the series 
\begin{align} 
\notag 
\vartheta_1^{\prime}(q) & = 2 q^{1/4} \sum_{n=0}^{\infty} 
     q^{n^2} (2n+1) (-q)^{n}, 
\end{align} 
is a consequence of the related statement in  
Proposition \ref{prop_Qsq_abcqz_unilateral_series_fn_integral_rep_v1} 
already cited by the results given above. 
This series arises in expanding the cube powers of the infinite 
$q$-Pochhammer symbol, $(q)_{\infty}^3$, cited as an example in 
Section \ref{subSection_Intro_Examples}. 

The higher-order series for the third derivative of this theta function 
provides another example of the new generalization of this first 
result stated by 
Proposition \ref{prop_IntsPolyPowsMultiples_OGF_pow_int-stmts_form_v2} 
expanded as in the following equations for $|q| \in (0, 1/2)$: 
\begin{align} 
\notag 
\vartheta_1^{\prime\prime\prime}(q) & = 
     \left(2 q^{1/4}\right) \times 
     \sum_{n=0}^{\infty} q^{n^2} (2n+1)^{3} (-q)^{n} \\ 
\notag 
   & = 
     \int_0^{\infty} \frac{2 e^{-t^2/2}}{\sqrt{2\pi}} \left[ 
     \sum_{0 \leq i \leq k \leq 3} 
     \binom{k}{i} 
     \frac{(-1)^{i} (2i+1)^{3} q^{k+1/4} 
     \NumFn_k\left(\sqrt{2 \Log(2)} t, -q\right)}{ 
     \left(q^{2} + 2 q \cosh\left(\sqrt{2 \Log(q)} t\right) + 1\right)^{k+1}} 
     \right] dt. 
\end{align} 
\end{example} 
Notice that the 
subsequent cases of the higher-order, $j^{th}$ derivatives of 
these theta functions are then formed from the Fourier series 
expansions of the classical functions, 
$\vartheta_i(u, q)$, expanded by the series in 
\eqref{eqn_JacobiTheta_fn_T.1} through \eqref{eqn_JacobiTheta_fn_T.4}. 
More precisely, 
if $j \in \mathbb{Z}^{+}$, the 
new integral representations for the 
higher--order $j^{th}$ derivatives of the functions, 
$\vartheta_i^{(j)}(q) \equiv \vartheta_i^{(j)}(0, q)$, 
with respect to their first parameter 
are then obtained from these series through 
Proposition \ref{prop_IntsPolyPowsMultiples_OGF_pow_int-stmts_form_v2} 
over odd-ordered positive integers $j := 2m+1$ when $i = 1$, and 
at even-ordered non-negative cases of $j := 2m$ when $i = 2,3,4$ 
for any $m \in \mathbb{N}$. 
Lemma \ref{lemma_JThetaFns_GenSeries_for_Higher-OrderDerivs-stmts_v1} and 
Corollary \ref{cor_JThetaFns_GenSeries_for_Higher-OrderDerivs-stmts_v2} 
given below state the exact series formulas and corresponding 
integral representations for the higher-order derivatives of these 
particular variants of the Jacobi theta functions. 

\begin{lemma}[Generalized Series for Higher-Order Derivatives] 
\label{lemma_JThetaFns_GenSeries_for_Higher-OrderDerivs-stmts_v1} 
For any non-negative $j \in \mathbb{Z}$, the higher-order 
$j^{th}$ derivatives, $\vartheta_i^{(j)}(0, q)$, of the classical 
Jacobi theta functions, $\vartheta_i(u, q)$, with respect to $u$ 
are expanded through the unilateral power series for the functions defined by 
\eqref{eqn_Theta_dm_cqz_ShiftedPolyMultiples_GeomSeries-Based_series_stmt_v2}
as 
\begin{align} 
\label{eqn_Formulas_for_JThetaFns_Higher-OrderDerivs-stmts_jthderivs_v1} 
\vartheta_1^{(j)}(q) & = \left(2q^{1/4}\right) \times 
     \vartheta_{0,j}\left(2, 1; q, q, -1\right) 
     \Iverson{j \equiv 1 \pmod{2}} \\ 
\notag 
\vartheta_2^{(j)}(q) & = \left(2q^{1/4}\right) \times 
     \vartheta_{0,j}\left(2, 1; q, q, -1\right) 
     \Iverson{j \equiv 0 \pmod{2}} \\ 
\notag 
\vartheta_3^{(j)}(q) & = 
     \Iverson{j = 0} + \left(2 q\right) \times 
     \vartheta_{0,j}\left(2, 2; q, q^{2}, 1\right) 
     \Iverson{j \equiv 0 \pmod{2}} \\ 
\notag 
\vartheta_4^{(j)}(q) & = 
     \Iverson{j = 0} - \left(2 q\right) \times 
     \vartheta_{0,j}\left(2, 2; q, q^{2}, -1\right) 
     \Iverson{j \equiv 0 \pmod{2}}, 
\end{align} 
for $q \in \mathbb{C}$ satisfying some 
$|q| \in \left(0, R_q\left(\vartheta_i\right)\right)$ such that the 
upper bound, $R_q\left(\vartheta_i\right)$, on the interval is defined as 
$R_q\left(\vartheta_{12}\right) \equiv 1/2$ when $i := 1,2$, and as 
$R_q\left(\vartheta_{34}\right) \equiv 1/4$ when $i := 3,4$. 
\end{lemma}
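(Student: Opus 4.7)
The plan is to prove each identity by termwise differentiation of the classical Fourier-type series for $\vartheta_i(u, q)$ recalled in the introduction, followed by specialization to $u = 0$. Because the coefficients of those series decay at the super-exponential rates $|q|^{n(n+1)}$ and $|q|^{n^2}$ on the stated domains, the differentiated series converge absolutely and uniformly on compact $u$-neighbourhoods of $0$, so for every fixed $j \geq 0$ the operator $\partial_u^{j}$ may be passed through each summation. After differentiating, the only $u$-dependent pieces are $\sin^{(j)}((2n+1)u)$ or $\cos^{(j)}(k u)$ evaluated at $u = 0$; these vanish when $j$ has the wrong parity and otherwise reduce to $\pm(2n+1)^{j}$ or $\pm k^{j}$, accounting for the parity brackets $\Iverson{j \equiv 1 \pmod{2}}$ and $\Iverson{j \equiv 0 \pmod{2}}$ appearing in the stated formulas.

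For $\vartheta_{1}$, starting from the series $\vartheta_1(u, q) = 2 q^{1/4} \sum_{n \geq 0} (-1)^{n} q^{n(n+1)} \sin((2n+1) u)$ and differentiating $j$ times before setting $u = 0$ produces, up to the global sign $(-1)^{(j-1)/2}$, the sum $2 q^{1/4} \sum_{n \geq 0} (-1)^{n} (2n+1)^{j} q^{n(n+1)}$ for odd $j$. Rewriting $q^{n(n+1)} = q^{n^2} \cdot q^{n}$ and absorbing $(-1)^{n}$ into $(-q)^{n}$ then matches the series defined by \eqref{eqn_Theta_dm_cqz_ShiftedPolyMultiples_GeomSeries-Based_series_stmt_v2} for $\vartheta_{0,j}(2, 1; q, q, -1)$, as in the special case $j = 1, 3$ already worked out in Example \ref{example_JThetaFns_iEqOne_Series_for_SpCaseDerivs}. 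The $\vartheta_{2}$ case is treated identically from its cosine expansion, with the opposite parity of $\cos^{(j)}(0)$ producing the bracket $\Iverson{j \equiv 0 \pmod{2}}$. For $\vartheta_{3}$ and $\vartheta_{4}$, the constant term $1$ supplies the $\Iverson{j = 0}$ contribution, and the re-indexing $n \mapsto n+1$ converts $2 \sum_{n \geq 1} (\pm 1)^{n} q^{n^2} \cos(2 n u)$ into $2 q \sum_{n \geq 0} (\pm 1)^{n+1} q^{n^2} q^{2n} \cos((2n+2) u)$; $j$-fold differentiation at $u = 0$ then yields, up to sign, the series defining $\vartheta_{0,j}(2, 2; q, q^{2}, \pm 1)$.

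The main care point is the bookkeeping of the global sign factors $(-1)^{\lfloor j/2 \rfloor}$ arising from repeated differentiation of $\sin$ and $\cos$, together with verifying that the radius bounds $R_q(\vartheta_{12}) = 1/2$ and $R_q(\vartheta_{34}) = 1/4$ are compatible with the convergence requirement $|cz| < 1$ on the parent series $\vartheta_{0,j}(\alpha, \beta; q, c, z)$ from Table \ref{table_Notation_SeriesDefsOfSpecialSqSeries-stmts_v1}. Under the substitutions $(c, z) \in \{(q, -1), (q^{2}, \pm 1)\}$ the $|cz| < 1$ constraint merely reduces to $|q| < 1$, which is comfortably satisfied; the tighter bounds stated in the lemma are inherited from the more restrictive conditions already imposed in Proposition \ref{prop_JacobiThetaFnVariants_Thetaiq_IntReps_stmts_v3}, so that the downstream integral representations obtained by applying Proposition \ref{prop_IntsPolyPowsMultiples_OGF_pow_int-stmts_form_v2} to these series also converge.
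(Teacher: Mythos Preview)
Your approach is essentially the same as the paper's: both proceed by termwise differentiation of the unilateral Fourier series for $\vartheta_i(u,q)$, evaluate at $u=0$, and use the parity of $\sin^{(j)}(0)$ and $\cos^{(j)}(0)$ to produce the Iverson brackets. The paper compresses all four cases into a single generic identity of the form $\partial_u^{(j)}\bigl[1 + c_0(q)\sum_n q^{n^2}\scFn((\alpha n+\beta)u)\,c^n z^n\bigr]_{u=0} = \Iverson{j=0} + c_0(q)\sum_n (\alpha n+\beta)^j\,\scFn^{(j)}(0)\,q^{n^2}(cz)^n$ and then specializes, whereas you treat the four theta functions individually and supply the convergence justification and the $n\mapsto n+1$ re-indexing explicitly; but the underlying argument is identical. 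Your flagging of the global sign $(-1)^{\lfloor j/2\rfloor}$ from $\scFn^{(j)}(0)$ is in fact more careful than the paper's own proof, which records the factor $\scFn^{(j)}(0)$ but then only remarks that $\cos(0)=1$ and $\sin(0)=0$ without tracking the higher-order signs.
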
 
\begin{proof} 
To prove the results for the special case series expanded by 
\eqref{eqn_Formulas_for_JThetaFns_Higher-OrderDerivs-stmts_jthderivs_v1}, 
first observe that for any $j \in \mathbb{N}$, the 
higher-order derivatives of the next Fourier series with respect to $u$ satisfy 
\begin{align} 
\label{eqn_proof_lemma_JThetaFns_GenSeries_for_Higher-OrderDerivs-v1-tag_i} 
\tag{i} 
 & \frac{\partial^{(j)}}{\partial{u}^{(j)}}\left[ 
     1 + c_0(q) \times \sum_{n=0}^{\infty} 
     q^{n^2} \scFn\left((\alpha n+\beta) \cdot u\right) c^n z^n 
     \right]_{u=0} \\ 
\notag 
   & \phantom{\frac{\partial^{(j)}}{\partial{u}^{(j)}}\Biggl[ } = 
     \Iverson{j = 0} + 
     c_0(q) \times \sum_{n=0}^{\infty} \left[ 
     (\alpha n+\beta)^{j} \cdot \scFn^{(j)}(0) 
     \right] \times 
     q^{n^2} (cz)^n, 
\end{align} 
for some function $c_0(q)$ that does not depend on $u$, 
series parameters $\alpha, \beta \in \mathbb{R}$, and where the 
derivatives of the trigonometric functions, denoted in 
shorthand by $\scFn \in \{\sin, \cos\}$, 
correspond to the known formulas for these functions from calculus.
The theta functions, $\vartheta_i(q)$ and $\vartheta_i^{(j)}(q)$, on the 
left-hand-side of 
\eqref{eqn_Formulas_for_JThetaFns_Higher-OrderDerivs-stmts_jthderivs_v1} 
form special cases of the Fourier series for the 
Jacobi theta functions, $\vartheta_i(u, q)$, that are then expanded by 
\eqref{eqn_proof_lemma_JThetaFns_GenSeries_for_Higher-OrderDerivs-v1-tag_i} 
where $\cos(0) = 1$ and $\sin(0) = 0$. 
\end{proof} 

\begin{cor}[Integrals for Higher-Order Derivatives of the Jacobi Theta Functions] 
\label{cor_JThetaFns_GenSeries_for_Higher-OrderDerivs-stmts_v2} 
Let $q \in \mathbb{C}$ be defined on some interval 
$|q| \in \left(0, R_q\left(\vartheta_i\right)\right)$ where 
$R_q\left(\vartheta_{12}\right) \equiv 1/2$ when $i := 1,2$, and as 
$R_q\left(\vartheta_{34}\right) \equiv 1/4$ when $i := 3,4$. 
Then for any such $q$ and fixed $m \in \mathbb{Z}^{+}$, the 
higher-order derivatives of the theta functions, 
$\vartheta_i(q)$, satisfy each of the following integral representations: 
\begin{align} 
\label{eqn_Formulas_for_JThetaFns_Higher-OrderDerivs-stmts_jthderivs_v2} 
\vartheta_1^{(2m+1)}(q) & = 
     \int_{0}^{\infty} \frac{2^{} e^{-t^2/2}}{\sqrt{\pi}} 
     \Biggl[ 
     \sum_{0 \leq i \leq k \leq 2m+1} 
     \binom{k}{i} (-1)^{i} (2i+1)^{2m+1} q^{k+1/4} \times \\ 
\notag 
   & \phantom{= \int_{0}^{\infty} \frac{2^{} e^{-t^2/2}}{\sqrt{\pi}}\Biggl[} 
     \times 
     \frac{\NumFn_k\left(\sqrt{2 \Log(q)} t, -q\right)}{ 
     \left(q^{2} + 2 q \cosh\left(\sqrt{2 \Log(q)} t\right) + 1\right)^{k+1}} 
     \Biggr] dt \\ 
\notag 
\vartheta_2^{(2m)}(q) & = 
     \int_{0}^{\infty} \frac{2^{} e^{-t^2/2}}{\sqrt{\pi}} 
     \Biggl[ 
     \sum_{0 \leq i \leq k \leq 2m} 
     \binom{k}{i} (-1)^{i} (2i+1)^{2m} q^{k+1/4} \times \\ 
\notag 
   & \phantom{= \int_{0}^{\infty} \frac{2^{} e^{-t^2/2}}{\sqrt{\pi}}\Biggl[} 
     \times 
     \frac{\NumFn_k\left(\sqrt{2 \Log(q)} t, -q\right)}{ 
     \left(q^{2} + 2 q \cosh\left(\sqrt{2 \Log(q)} t\right) + 1\right)^{k+1}} 
     \Biggr] dt \\ 
\notag 
\vartheta_3^{(2m)}(q) & = 
     \int_{0}^{\infty} \frac{2^{2m+1} e^{-t^2/2}}{\sqrt{\pi}} 
     \Biggl[ 
     \sum_{0 \leq i \leq k \leq 2m} 
     \binom{k}{i} (-1)^{k-i} (i+1)^{2m} q^{2k+1} \times \\ 
\notag 
   & \phantom{= \Iverson{m = 0} + 
              \int_{0}^{\infty} \Biggl[} \times 
     \frac{\NumFn_k\left(\sqrt{2 \Log(q)} t, q^2\right)}{ 
     \left(q^{4} - 2 q^{2} \cosh\left(\sqrt{2 \Log(q)} t\right) + 1\right)^{k+1}} 
     \Biggr] dt \\ 
\notag 
\vartheta_4^{(2m)}(q) & = 
     \int_{0}^{\infty} \frac{2^{2m+1} e^{-t^2/2}}{\sqrt{\pi}} 
     \Biggl[ 
     \sum_{0 \leq i \leq k \leq 2m} 
     \binom{k}{i} (-1)^{i+1} (i+1)^{2m} q^{2k+1} \times \\ 
\notag 
   & \phantom{= \Iverson{m = 0} + 
              \int_{0}^{\infty} \Biggl[} \times 
     \frac{\NumFn_k\left(\sqrt{2 \Log(q)} t, -q^2\right)}{ 
     \left(q^{4} + 2 q^{2} \cosh\left(\sqrt{2 \Log(q)} t\right) + 
     1\right)^{k+1}} 
     \Biggr] dt. 
\end{align} 
\end{cor}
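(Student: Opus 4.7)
The proof plan is essentially a two-step reduction: first rewrite each $\vartheta_i^{(j)}(q)$ as a scalar multiple of the generalized polynomial-multiple square series $\vartheta_{0,j}(\alpha,\beta;q,c,z)$ using Lemma \ref{lemma_JThetaFns_GenSeries_for_Higher-OrderDerivs-stmts_v1}, and then substitute the resulting expression into the integral representation provided by Proposition \ref{prop_IntsPolyPowsMultiples_OGF_pow_int-stmts_form_v2}. Since each of the four stated identities in the corollary is a specialization of the pair (lemma $+$ proposition) at a particular tuple $(\alpha,\beta,c,z)$ and at a specific parity of $j$, the corollary follows by case analysis on $i \in \{1,2,3,4\}$ with essentially no new analytic content.

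More explicitly, I would organize the proof by four parallel steps. For $\vartheta_1^{(2m+1)}(q)$ and $\vartheta_2^{(2m)}(q)$, the lemma supplies
\begin{align*}
\vartheta_1^{(2m+1)}(q) &= 2q^{1/4} \cdot \vartheta_{0,2m+1}(2,1;q,q,-1), \\
\vartheta_2^{(2m)}(q) &= 2q^{1/4} \cdot \vartheta_{0,2m}(2,1;q,q,-1),
\end{align*}
so substituting $(\alpha,\beta,c,z) = (2,1,q,-1)$ into \eqref{eqn_prop_IntsPolyPowsMults_OGF_pow_int_stmt_v2} yields integrands whose summands carry factors $(2i+1)^{j}$, $(cz)^k = (-q)^k$, and a denominator $c^2z^2 - 2cz\cosh(\sqrt{2\Log(q)}t)+1 = q^2 + 2q\cosh(\sqrt{2\Log(q)}t)+1$. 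Combining $(-q)^k$ with the $(-1)^{k-i}$ from the proposition collapses the alternating signs to a single $(-1)^i$, and the outer constant $2q^{1/4}$ absorbs into $q^{k+1/4}$ inside the sum. The $\vartheta_3^{(2m)}(q)$ and $\vartheta_4^{(2m)}(q)$ cases proceed identically with $(\alpha,\beta,c,z) = (2,2,q^2,\pm 1)$; here one further notes $(2i+2)^{2m} = 2^{2m}(i+1)^{2m}$, which together with the outer factor $\pm 2q$ produces the $2^{2m+1}$ coefficient and the $q^{2k+1}$ monomial displayed in the corollary.

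The convergence domain for each identity is inherited directly from the range of admissible $q$ in the corresponding lemma statement (namely $|q|<1/2$ for $i=1,2$ and $|q|<1/4$ for $i=3,4$), which in turn was chosen so that $|cz|<1$ holds in the underlying geometric square series and so that Proposition \ref{prop_OrdExp_GeomSquareSeries} and its polynomial-multiple generalization apply. Thus no new radius-of-convergence analysis is needed beyond what has already been justified.

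I do not expect any genuine obstacle: the whole argument is bookkeeping of constants. The one piece requiring care is tracking the signs and powers of $q$ through the substitution $(cz)^k \NumFn_k(\sqrt{2\Log(q)}t, cz)$ when $z = \pm 1$, and verifying that the resulting denominator polynomial in $q$ and $\cosh$ matches the stated form. A brief sanity check on the $m=0$ reductions against the corresponding entries in Proposition \ref{prop_JacobiThetaFnVariants_Thetaiq_IntReps_stmts_v3} and the example $\vartheta_1^{\prime}(q)$ in Example \ref{example_JThetaFns_iEqOne_Series_for_SpCaseDerivs} would be enough to confirm that the constants have been tracked correctly.
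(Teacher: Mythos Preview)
Your proposal is correct and follows exactly the paper's own approach: the paper's proof simply states that the formulas follow immediately by applying Proposition \ref{prop_IntsPolyPowsMultiples_OGF_pow_int-stmts_form_v2} to the series expansions provided by Lemma \ref{lemma_JThetaFns_GenSeries_for_Higher-OrderDerivs-stmts_v1}. Your write-up is in fact more detailed than the paper's, since you explicitly track the sign and power-of-$q$ bookkeeping (e.g., how $(-q)^k(-1)^{k-i}$ collapses to $(-1)^i$ and how $(2i+2)^{2m}=2^{2m}(i+1)^{2m}$ produces the $2^{2m+1}$ prefactor), whereas the paper leaves all of this implicit.
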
 
\begin{proof} 
The formulas in 
\eqref{eqn_Formulas_for_JThetaFns_Higher-OrderDerivs-stmts_jthderivs_v2} 
follow immediately as consequences of the integral representations 
proved in 
Proposition \ref{prop_IntsPolyPowsMultiples_OGF_pow_int-stmts_form_v2} 
applied to each of the series expansions for the 
higher-order derivative cases provided by 
Lemma \ref{lemma_JThetaFns_GenSeries_for_Higher-OrderDerivs-stmts_v1}. 
\end{proof} 

\section{Applications of Exponential Series Generating Functions} 
\label{Section_AppsOf_ExpSqSeries} 
\label{subSection_EGFs_and_GraphTheory_Examples} 
\label{subsubSection_Apps_in_Graph_Theory} 

\subsection{A Comparison of Characteristic Expansions of the Square Series Integrals} 
\label{subSection_EGFs_AComparison} 

\begin{example}[The Number of Edges in Labeled Graphs] 
\label{prop_Number_of_Edges_LabeledGraph_OGF_EGF_forms} 
The \keywordemph{number of edges in a labeled graph on $n \geq 1$ nodes}, 
denoted by the sequence $\langle e(n) \rangle$, 
is given in closed-form by the formula \citep[A095351]{OEIS} 
\begin{align*} 
e(n) & = \frac{1}{4} n(n-1) 2^{n(n-1) / 2}.
\end{align*} 
The ordinary and exponential generating functions of this sequence, 
defined to be $e_{\sq}(z)$ and $\widehat{e}_{\sq}(z)$, respectively, 
correspond to the special cases of the 
second derivatives with respect to $z$ of the 
ordinary and exponential generating functions, 
$G_{\sq}(q, c, z)$ and $E_{\sq}(q, r, z)$, 
from Table \ref{table_Notation_SeriesDefsOfSpecialSqSeries-stmts_v1} 
which are expanded as 
\begin{align} 
\notag 
e_{\sq}(z) & := 
     \OGFSeriesTransform{z}{e(0),e(1),e(2),\ldots} \equiv 
     \frac{z^2}{4} \times G_{\sq}^{\prime\prime}\left( 
     2^{1/2}, 2^{-1/2}, z 
     \right) \\ 
\notag 
\widehat{e}_{\sq}(z) & := 
     \EGFSeriesTransform{z}{e(0),e(1),e(2),\ldots} \equiv 
     \frac{z^2}{4} \times E_{\sq}^{\prime\prime}\left( 
     2^{1/2}, 2^{-1/2}, z 
     \right). 
\end{align} 
The special cases of the OGF formulas cited in 
\eqref{eqn_Table_SpCaseSequenceOGFFormulas-GeomSeriesBased_v4} and 
\eqref{eqn_Table_SpCaseSequenceOGFFormulas-ExpSeriesBased_v2} of 
Table \ref{table_fnSeq_OGFs_and_jthDerivs_listings-stmts_v1} 
then lead to the next integral representations for each of these 
sequence generating functions given by 
\begin{align} 
\notag 
e_{\sq}(z) & = 
     \int_0^{\infty} \frac{e^{-t^2/2}}{\sqrt{2\pi}}\left[ 
     \frac{6 z^{4} - \sqrt{2} z^{3} (z^2+6) 
     \cosh\left(\sqrt{\Log(2)} t\right) + 4 z^{2} 
     \cosh\left(2 \sqrt{\Log(2)} t\right)}{ 
     \left(z^2 - 2\sqrt{2} z \cosh\left(\sqrt{\Log(2)} t\right) + 2\right)^3} 
     \right] dt \\ 
\notag 
\widehat{e}_{\sq}(z) & = 
     \frac{z^2}{8} \times \int_0^{\infty} 
     \frac{e^{-t^2/2}}{\sqrt{2\pi}}\left[ 
     \sum_{b=\pm 1} 
     \exp\left(e^{bt \sqrt{\Log(2)}} \frac{z}{\sqrt{2}} + 2bt \sqrt{\Log(2)} 
     \right) 
     \right] dt. 
\end{align} 
\end{example} 

The comparison given in 
Example \ref{prop_Number_of_Edges_LabeledGraph_OGF_EGF_forms} 
clearly identifies these characteristic, or at least stylistic, 
differences in the 
resulting square series integral representations derived from the forms of 
these separate ``\emph{ordinary}'' and ``\emph{exponential}'' sequence types. 
We also see the general similarities in form of the first 
geometric-series-like integrals to the Fourier series for the 
\emphonce{Poisson kernel}, where we point out the similarities of the second 
exponential-series-like integrals to generating functions for the 
non-exponential, Stirling-number-related \emphonce{Bell polynomials}, $B_n(x)$
\citep[\S 1.15(iii); \S 26.7]{NISTHB} \citep[\S 4.1.8]{UC}. 

\subsection{Initial Results} 

\begin{prop}[Exponential Square Series Generating Functions] 
\label{prop_EGF_for_OrdGeomSqSeries_IntRep} 
\label{cor_Exp-BasedSqSeries_Variant_qPowBinom_nChoose2} 
For any fixed parameters $q, z,r \in \mathbb{C}$, the 
\keywordemph{exponential square series} functions, defined respectively as 
\eqref{eqn_Esq_qrz_series_def_v1} and 
\eqref{eqn_ETildesq_qBinomPow_rz_int_rep_series_def_v1} in 
Table \ref{table_Notation_SeriesDefsOfSpecialSqSeries-stmts_v1}, 
have the following integral representations: 
\StartGroupingSubEquations 
\begin{align} 
\label{eqn_Esq_qrz_series_def_and_int_rep_v1} 
E_{\sq}(q, r, z) & = 
     \int_0^{\infty} \frac{e^{-t^2/2}}{\sqrt{2\pi}} \left[ 
     e^{e^{\sqrt{2 \Log(q)} t} rz} + e^{e^{-\sqrt{2 \Log(q)} t} rz} 
     \right] dt \\ 
\label{eqn_ETildesq_qBinomPow_rz_int_rep} 
\widetilde{E}_{\sq}(q, r, z) & = 
     \int_0^{\infty} \frac{e^{-t^2/2}}{\sqrt{2\pi}} \left[ 
     e^{e^{\sqrt{\Log(q)} t} \frac{rz}{\sqrt{q}}} + 
     e^{e^{-\sqrt{\Log(q)} t} \frac{rz}{\sqrt{q}}} 
     \right] dt. 
\end{align} 
\EndGroupingSubEquations 
\end{prop}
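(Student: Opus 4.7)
The plan is to obtain both formulas as direct applications of Theorem \ref{thm_SqSeries_OGF_Transforms}, using the second form \eqref{eqn_MainThmStmt_OrdSqSeriesIntReps_v2} which requires only the OGF evaluated at a rescaled argument. The key observation is that the sequence $f_n := r^n/n!$ defining $E_{\sq}(q,r,z)$ has ordinary generating function $F_f(z) = \sum_{n\geq 0} r^n z^n / n! = e^{rz}$, which is entire in $z$. Substituting this directly into \eqref{eqn_MainThmStmt_OrdSqSeriesIntReps_v2} gives
\begin{align*}
E_{\sq}(q, r, z) = \int_0^{\infty} \frac{e^{-t^2/2}}{\sqrt{2\pi}} \sum_{b=\pm 1} \exp\!\left(r \cdot e^{bt\sqrt{2\Log(q)}} \cdot z\right) dt,
\end{align*}
which is exactly \eqref{eqn_Esq_qrz_series_def_and_int_rep_v1} after expanding the sum over $b = \pm 1$. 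This step is essentially a substitution into the main theorem, so no additional argument is required beyond checking that the convergence hypothesis is met, which it is since $F_f$ is entire.

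For the second identity \eqref{eqn_ETildesq_qBinomPow_rz_int_rep}, the strategy is to reduce $\widetilde{E}_{\sq}$ to an instance of $E_{\sq}$ by rewriting the exponent $\binom{n}{2}$ in terms of a true square. Using the factorization $q^{\binom{n}{2}} = q^{n(n-1)/2} = \bigl(q^{1/2}\bigr)^{n^2} \cdot q^{-n/2}$, the series defining $\widetilde{E}_{\sq}$ becomes
\begin{align*}
\widetilde{E}_{\sq}(q, r, z) = \sum_{n=0}^{\infty} \bigl(q^{1/2}\bigr)^{n^2} \left(\frac{r}{\sqrt{q}}\right)^{n} \frac{z^n}{n!} = E_{\sq}\!\left(q^{1/2},\, r q^{-1/2},\, z\right).
\end{align*}
Applying the just-established formula \eqref{eqn_Esq_qrz_series_def_and_int_rep_v1} with $q \mapsto q^{1/2}$ and $r \mapsto r q^{-1/2}$, and using the identity $\sqrt{2 \Log(q^{1/2})} = \sqrt{\Log(q)}$, yields the claimed integrand $\exp\bigl(e^{\pm t \sqrt{\Log(q)}} \cdot rz / \sqrt{q}\bigr)$ on the right-hand side of \eqref{eqn_ETildesq_qBinomPow_rz_int_rep}.

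The only non-routine point, and what I would flag as the main obstacle, is the justification that Theorem \ref{thm_SqSeries_OGF_Transforms} applies in the regime claimed here (namely all $q, r, z \in \mathbb{C}$, rather than only $|q| \leq 1$). Since $F_f(z) = e^{rz}$ is entire and the inner series $\sum_j z^j F_f^{(j)}(z)(e^{bt\sqrt{2\Log(q)}} - 1)^j / j!$ collapses to the telescoped exponential $\exp(r z e^{bt\sqrt{2\Log(q)}})$, the interchange of sum and integral used in the theorem's proof can be verified directly: the Gaussian factor $e^{-t^2/2}$ provides super-exponential decay that dominates the double-exponential growth in $t$ of the integrand for any fixed $q, r, z$, so dominated convergence applies. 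Thus the proof reduces to the substitution outlined above together with a brief convergence remark.
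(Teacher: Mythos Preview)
Your proof is essentially identical to the paper's: you apply Theorem~\ref{thm_SqSeries_OGF_Transforms} in the form \eqref{eqn_MainThmStmt_OrdSqSeriesIntReps_v2} to the OGF $F_f(z)=e^{rz}$ for the first identity, and reduce the second to the first via $\widetilde{E}_{\sq}(q,r,z)=E_{\sq}(q^{1/2},rq^{-1/2},z)$, exactly as the paper does.

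One correction to your final convergence remark: the Gaussian factor $e^{-t^2/2}$ does \emph{not} dominate double-exponential growth. If $q>1$ is real and $\operatorname{Re}(rz)>0$, then $\sqrt{2\Log(q)}>0$ and the integrand behaves like $\exp\bigl(e^{ct}\cdot\operatorname{Re}(rz)-t^2/2\bigr)$, which diverges as $t\to\infty$ since $e^{ct}$ eventually dwarfs $t^2/2$. So the extension to arbitrary $q\in\mathbb{C}$ does not follow from the argument you give; the paper's own proof simply invokes Theorem~\ref{thm_SqSeries_OGF_Transforms} (whose hypothesis is $|q|\le 1$) and does not attempt to justify the broader range stated in the proposition.
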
 
\begin{proof} 
First, it is easy to see from 
Theorem \ref{thm_SqSeries_OGF_Transforms} 
applied to the first form of the exponential series OGF given in 
\eqref{eqn_Table_SpCaseSequenceOGFFormulas-ExpSeriesBased_v1}, that 
\begin{align*} 
E(q, r, z) & = 
     \int_0^{\infty} \frac{e^{-t^2/2}}{\sqrt{2\pi}} \left[ 
     \sum_{b = \pm 1} e^{e^{b \sqrt{2 \Log(q)} t} rz} 
     \right] dt, 
\end{align*} 
which implies the first result given in 
\eqref{eqn_Esq_qrz_series_def_and_int_rep_v1}. 
Next, the series for the second function, 
$\widetilde{E}_{\sq}(q, r, z)$, over the binomial powers of 
$q^{\binom{n}{2}} \equiv q^{n(n-1)/2}$ 
is expanded through first function as 
\begin{align*} 
\widetilde{E}_{\sq}(q, r, z) & = 
     \sum_{n=0}^{\infty} q^{n(n-1) / 2} r^n \frac{z^n}{n!} \equiv 
     E_{\sq}\left(q^{1/2}, r q^{-1/2}, z\right). 
\end{align*} 
This similarly leads to the form of the second result stated in 
\eqref{eqn_ETildesq_qBinomPow_rz_int_rep}. 
\end{proof} 

\subsection{Examples of Chromatic Generating Functions} 

A class of generating function expansions resulting from the application of 
Proposition \ref{prop_EGF_for_OrdGeomSqSeries_IntRep} 
to exponential-series-based OGFs defines 
\textit{chromatic generating functions} of the form 
\citep[\S 3.15]{ECV1} 
\begin{equation*} 
\widehat{F}_f(q, z) = \sum_{n=0}^{\infty} 
     \frac{f(n) z^n}{q^{\binom{n}{2}} n!}, 
\end{equation*} 
for some prescribed sequence of terms, $\langle f(n) \rangle$. 
If $f(n)$ denotes the 
\textit{number of labeled acyclic digraphs with $n$ vertices}, 
for example as considered in 
\citep[Prop. 2.1]{STANLEYACYCLICGRAPHS} \citep[\cf Ex. 3.15.1(e)]{ECV1}, 
then the chromatic generating function, $\widehat{F}_f(q,z)$, has the form 
\begin{equation*} 
\widehat{F}_{f}(2,z) := 
     \sum_{n=0}^{\infty} \frac{f(n) z^n}{2^{\binom{2}{n}} n!} = 
     \left(\sum_{n=0}^{\infty} \frac{(-1)^n z^n}{2^{\binom{n}{2}} n!} 
     \right)^{-1}. 
\end{equation*} 
Example \ref{ex_ChromaticGFs_ExpSqSeries_Apps} below 
cites other particular chromatic generating functions corresponding to 
special cases of the integral representations for the series 
involving binomial square powers of $q$ established in 
Proposition \ref{cor_Exp-BasedSqSeries_Variant_qPowBinom_nChoose2}. 

\begin{example}[Powers of a Special Chromatic Generating Function] 
\label{ex_ChromaticGFs_ExpSqSeries_Apps} 
Suppose that $G_n$ is a finite, simple graph with $n$ vertices and that 
$\chi(G_n, \lambda)$ denotes the \textit{chromatic polynomial} of 
$G_n$ evaluated at some $\lambda \in \mathbb{C}$ 
\citep[\S 1]{STANLEYACYCLICGRAPHS}. 
Then for a non-negative integer $k$, a variant of the 
EGF, $\widehat{M}_k(z)$, 
for the sequence of $M_n(k) = \sum_{G_n} \chi(G_n, k)$ 
is expanded in integer powers of a square series integrals as 
\citep[\S 2]{STANLEYACYCLICGRAPHS} 
\begin{equation} 
\label{eqn_ChromaticGF_Kth_Powers_ident_stmt_v1} 
\widehat{M}_k(z) = \sum_{n=0}^{\infty} \frac{M_n(k) z^n}{
     2^{\binom{n}{2}} n!} = \left(\sum_{n=0}^{\infty} 
     \frac{z^n}{2^{\binom{n}{2}} n!}\right)^{k}. 
\end{equation} 
The right-hand-side of \eqref{eqn_ChromaticGF_Kth_Powers_ident_stmt_v1} 
corresponds to the $k^{th}$ powers of the generating function, 
$E_{\sq}\left(1/2, 1, z\right)$, which is 
expanded for $k \in \mathbb{Z}^{+}$ through the integral given in 
Proposition \ref{cor_Exp-BasedSqSeries_Variant_qPowBinom_nChoose2} 
as 
\begin{equation} 
\label{eqn_ChromaticGF_Kth_Powers_ident_stmt_v2} 
\widehat{M}_k(z) = \left(
     \int_0^{\infty} \frac{e^{-t^2/2}}{\sqrt{2\pi}} \left[ 
     e^{e^{\imath \sqrt{\Log(2)} t} \sqrt{2} z} + 
     e^{e^{-\imath \sqrt{\Log(2)} t} \sqrt{2} z} 
     \right] dt 
     \right)^{k}. 
\end{equation} 
Notice that since the integrands in \eqref{eqn_ETildesq_qBinomPow_rz_int_rep} 
can be integrated termwise in $z$, the 
right-hand-side of \eqref{eqn_ChromaticGF_Kth_Powers_ident_stmt_v2} 
is also expressed as the multiple integral 
\begin{equation} 
\notag 
\widehat{M}_k(z) = \int_0^{\infty} \cdots \int_0^{\infty} 
     \frac{e^{-(t_1^2+ \cdots + t_k^2)/2}}{\left(2\pi\right)^{k/2}} 
     \left[\prod_{i=1}^{k} 
     e^{e^{\imath \sqrt{\Log(2)} t_i} \sqrt{2} z} + 
     e^{e^{-\imath \sqrt{\Log(2)} t_i} \sqrt{2} z}
     \right] dt_1 \cdots dt_k. 
\end{equation} 
\end{example} 

\subsection{Another Application: A Generalized Form of the Binomial Theorem} 

\begin{prop}[A Square Series Analog to the Binomial Theorem] 
\label{prop_BinomThm_SqSeries_analog} 
For constants $q, r, c, d \in \mathbb{C}$ and $n \in \mathbb{N}$, 
a generalized analog to the binomial theorem involving square powers of the 
parameters $q$ and $r$ has the following double integral representation: 
\begin{align} 
\label{eqn_SquareSeries_BinomThm_Analog_stmt} 
 & \sum_{k=0}^{n} \binom{n}{k} c^k q^{k^2} d^{n-k} r^{(n-k)^2} = 
     \int_0^{\infty} \int_0^{\infty} 
     \frac{e^{-(t^2+s^2) / 2}}{2\pi} \Biggl[ 
     \left(c e^{\sqrt{2 \Log(q)} t} + d e^{\sqrt{2 \Log(r)} s}\right)^n \\ 
\notag 
   & \phantom{\sum\ \ \binom{n}{k}} + 
     \left(c e^{\sqrt{2 \Log(q)} t} + d e^{-\sqrt{2 \Log(r)} s}\right)^n + 
     \left(c e^{-\sqrt{2 \Log(q)} t} + d e^{\sqrt{2 \Log(r)} s}\right)^n \\ 
\notag 
   & \phantom{\sum\ \ \binom{n}{k}} + 
     \left(c e^{-\sqrt{2 \Log(q)} t} + d e^{-\sqrt{2 \Log(r)} s}\right)^n
     \Biggr] dt ds. 
\end{align} 
\end{prop}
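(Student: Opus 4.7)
The plan is to recognize the left--hand side of \eqref{eqn_SquareSeries_BinomThm_Analog_stmt} as the $n!$--scaled coefficient of $z^n$ in a product of two exponential square series generating functions, and then to apply the integral representations from Proposition \ref{prop_EGF_for_OrdGeomSqSeries_IntRep} to each factor separately. Specifically, multiplying the two EGFs $E_{\sq}(q,c,z) = \sum_{k\geq 0} q^{k^2} c^k z^k/k!$ and $E_{\sq}(r,d,z) = \sum_{k\geq 0} r^{k^2} d^k z^k/k!$ as formal power series gives the binomial convolution
\[
n! \cdot [z^n]\, E_{\sq}(q,c,z) \, E_{\sq}(r,d,z) \;=\; \sum_{k=0}^{n} \binom{n}{k} c^k q^{k^2} d^{n-k} r^{(n-k)^2},
\]
which is exactly the left--hand side of \eqref{eqn_SquareSeries_BinomThm_Analog_stmt}.

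Next, I would substitute the integral form \eqref{eqn_Esq_qrz_series_def_and_int_rep_v1} into each of the two factors, using independent dummy variables $t$ and $s$, and merge the result into a double integral over $[0,\infty)^2$ with Gaussian weight $e^{-(t^2+s^2)/2}/(2\pi)$. Expanding the product of the two $\sum_{b=\pm 1}$ inner sums yields four cross terms, each of the form
\[
\exp\!\left( c\, e^{b\sqrt{2\Log(q)}\, t}\, z\right) \cdot \exp\!\left( d\, e^{b'\sqrt{2\Log(r)}\, s}\, z\right) = \exp\!\left(\left(c\, e^{b\sqrt{2\Log(q)}\, t} + d\, e^{b'\sqrt{2\Log(r)}\, s}\right) z\right),
\]
for $b, b' \in \{-1, +1\}$. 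The critical observation driving the whole argument is that the two exponents add, collapsing each of the four cross terms into a single exponential that is linear in $z$.

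Finally, I would extract $n! [z^n]$ from each of the four combined exponentials via the elementary identity $n! \cdot [z^n] e^{\gamma z} = \gamma^n$; this directly produces the four summands $(c\, e^{b\sqrt{2\Log(q)}\, t} + d\, e^{b'\sqrt{2\Log(r)}\, s})^n$ appearing on the right--hand side of \eqref{eqn_SquareSeries_BinomThm_Analog_stmt}. The only non--trivial technical step is the interchange of coefficient extraction with the double integral, which should be routine: for each fixed $(t,s)$ the integrand is an entire function of $z$ whose Taylor coefficients in $z$ are dominated on compact sets of $z$ by quantities integrable against the Gaussian weight, so Fubini together with termwise extraction of power series coefficients applies without difficulty. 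I do not anticipate any deeper obstacle, since the result is fundamentally a clean EGF--product reformulation of the ordinary binomial theorem combined with two invocations of Proposition \ref{prop_EGF_for_OrdGeomSqSeries_IntRep}.
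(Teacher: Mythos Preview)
Your proposal is correct and follows the same overall architecture as the paper's proof: recognize the left-hand side as $n!\,[z^n]\bigl(E_{\sq}(q,c,z)\,E_{\sq}(r,d,z)\bigr)$, replace each factor by its integral representation from Proposition~\ref{prop_EGF_for_OrdGeomSqSeries_IntRep} with independent Gaussian variables $t,s$, and read off the four cross terms.

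The one substantive difference is in the final coefficient-extraction step. You invoke the elementary identity $n!\,[z^n]\,e^{\gamma z}=\gamma^n$ directly on each of the four combined exponentials, which is the cleanest possible route. The paper instead takes a detour: it introduces the gamma-function integral $\Gamma(n+1)=\int_0^\infty u^n e^{-u}\,du$ and the Laplace-type evaluation $\int_0^\infty e^{\gamma u z}e^{-u}\,du=(1-\gamma z)^{-1}$, thereby converting the exponential integrand into a sum of four geometric-series OGFs before extracting $[z^n]$ to obtain $\gamma^n$. Your approach is strictly more elementary and avoids the need for any convergence restriction such as $\operatorname{Re}(\gamma z)<1$ that the Laplace step implicitly requires; the paper's version, on the other hand, exhibits a general EGF-to-OGF conversion mechanism that ties back to the other OGF-based transformations in the article. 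Either way the argument goes through without difficulty.
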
 
\begin{proof} 
Let $c, q \in \mathbb{C}$ and for variable $t \in \mathbb{R}$ define the 
function, $E_q^{(t)}(c, z)$, by 
\begin{equation} 
\label{eqn_prop_BinomThmAnalog_Eqtcz_fn_def-stmt_v1} 
E_q^{(t)}(c, z) = e^{e^{\sqrt{2 \Log(q)} t} cz} + 
     e^{e^{-\sqrt{2 \Log(q)} t} cz}. 
\end{equation} 
It follows from the transformation result in 
Proposition \ref{prop_EGF_for_OrdGeomSqSeries_IntRep} that 
\begin{equation} 
\label{eqn_Eqtcz_series_integral_ident_def} 
\sum_{n=0}^{\infty} q^{n^2} c^n \frac{z^n}{n!} = \int_0^{\infty} 
     \frac{e^{-t^2/2}}{\sqrt{2\pi}} E_q^{(t)}(c, z) dt. 
\end{equation} 
Fix the constants $c, q, d, r \in \mathbb{C}$ and observe that the 
next double integral results for the coefficients in the 
discrete convolution of two power series in the form of 
\eqref{eqn_Eqtcz_series_integral_ident_def}. 
\begin{equation} 
\label{eqn_BinomThmAnalog_proof_scf_note_v1} 
\sum_{k=0}^{n} \frac{c^k q^{k^2}}{k!} \cdot 
     \frac{d^{n-k} r^{(n-k)^2}}{(n-k)!} = \int_0^{\infty} \int_0^{\infty} 
     \frac{e^{-(t^2+s^2)/2}}{2\pi} \cdot [z^n]\left( 
     E_q^{(t)}(c, z) E_r^{(s)}(d, z)\right) dt ds 
\end{equation} 
The left-hand-side sum in \eqref{eqn_SquareSeries_BinomThm_Analog_stmt} 
is obtained by multiplying the coefficient definition 
on the right-hand-side the previous equation by a factor of $n!$. 
This operation can be applied termwise to the series for 
$E_q^{(t)}(c, z) E_r^{(s)}(d, z)$ through the next integral for the 
single factorial function, or gamma function, where 
$\Gamma(n+1) = n!$ whenever $n \in \mathbb{N}$ 
\citep[\S 5.2(i); \S 5.4(i)]{NISTHB}. 
\begin{equation} 
\label{eqn_GammaFn_nFact_integral_stmt} 
\Gamma(n+1) = \int_0^{\infty} u^n e^{-u} du,\ 
     \text{ $\RePart(n) > -1$ } 
\end{equation} 
Then it is easy to see that 
\begin{equation} 
\label{eqn_BinomThm_analog_proof_Expeabuz_integral_result} 
\int_0^{\infty} e^{e^{a} buz} e^{-u} du = \int_0^{\infty} 
     e^{-(1-e^{a} b z) u} du = \frac{1}{\left(1 - e^{a} b z\right)} 
\end{equation} 
for constants $a, b$ and $z$ such that the right--hand--side of 
\eqref{eqn_BinomThm_analog_proof_Expeabuz_integral_result} 
satisfies $\RePart(e^{a} b z) < 1$. 
\newcommand{\bcfij}[8]{\ensuremath{b_{#1,#2}^{\left(#3,#4\right)}\left(#5, #6, #7, #8\right)}} 
Next, 
let the coefficient terms, $\bcfij{i}{j}{t}{s}{c}{q}{d}{r}$, be defined in the 
form of the next equation. 
\begin{equation*}
\bcfij{i}{j}{t}{s}{c}{q}{d}{r} = 
     c \cdot \exp\left((-1)^{i} \sqrt{2 \Log(q)} t\right) + 
     d \cdot \exp\left((-1)^{j} \sqrt{2 \Log(r)} s\right) 
\end{equation*} 
By combining the results in 
\eqref{eqn_GammaFn_nFact_integral_stmt} and 
\eqref{eqn_BinomThm_analog_proof_Expeabuz_integral_result}, it follows that 
the function 
\begin{align*} 
\widetilde{E}_{q,r}^{(t,s)}(c, d, z) & := 
     \int_0^{\infty} \frac{e^{-(t^2+s^2)/2}}{2\pi} 
     E_q^{(t)}(c, uz) E_r^{(s)}(d, uz) e^{-u} du \\ 
   & = 
     \frac{e^{-(t^2+s^2)/2}}{2\pi}\left[ 
     \sum_{(i,j) \in \{0,1\} \times \{0,1\}} 
     \frac{1}{(1 - \bcfij{i}{j}{t}{s}{c}{q}{d}{r} z)}
     \right] \\ 
   & = 
     \sum_{n=0}^{\infty} \left[ 
     \sum_{(i,j) \in \{0,1\} \times \{0,1\}} 
     \frac{e^{-(t^2+s^2)/2}}{2\pi} \bcfij{i}{j}{t}{s}{c}{q}{d}{r}^{n} 
     \right] z^n. 
\end{align*} 
The complete result given in \eqref{eqn_SquareSeries_BinomThm_Analog_stmt} 
then follows from the last equation by integrating over non-negative 
$s, t \in \mathbb{R}$ as 
\begin{equation} 
\notag 
\sum_{k=0}^{n} \binom{n}{k} c^k q^{k^2} d^{n-k} r^{(n-k)^2} = 
     \int_0^{\infty} \int_0^{\infty} [z^n]\left( 
     \widetilde{E}_{q,r}^{(t,s)}(c, d, z)\right) dt ds. 
     \qedhere  
\end{equation} 
\end{proof} 

\section{Direct Expansions of Fourier-Type Square Series} 
\label{Section_ExpansionsOf_Fourier-TypeSqSeries} 

\subsection{Initial Results} 

\StartGroupingSubEquations{} 

\begin{cor}[Integral Representations of Fourier--Type Series] 
\label{cor_FscAlphaBetauqz_FourierSquareSeries-scFnsCosSinIntReps} 
For $\alpha, \beta \in \mathbb{R}$, and $c, z \in \mathbb{C}$ with 
$|cz| < 1$, the generalized 
\emph{Fourier-type square series functions} defined by the series in 
\eqref{eqn_FscAlphaBetauqz_FourierSquareSeries-introc_def_v3} 
have the following integral representations: 
\begin{align} 
\label{eqn_FscAlphaBetauqz_FourierSquareSeries-scFnCosIntRep-stmt_v1} 
 & F_{\cos}\left(\alpha, \beta; q, c, z\right) = 
     \int_0^{\infty} \frac{e^{\imath\beta} e^{-t^2/2}}{\sqrt{2\pi}} \left[ 
     \frac{1 - e^{\imath\alpha} cz \cosh\left(t \sqrt{2 \Log(q)}\right)}{ 
     e^{2\imath\alpha} c^2 z^2 - 2 e^{\imath\alpha} cz 
     \cosh\left(t \sqrt{2 \Log(q)}\right) + 1} 
     \right] dt \\ 
\notag 
   & \phantom{F_{\cos}\bigl(\alpha, \beta; q, c } + 
     \int_0^{\infty} \frac{e^{-\imath\beta} e^{-t^2/2}}{\sqrt{2\pi}} \left[ 
     \frac{1 - e^{-\imath\alpha} cz \cosh\left(t \sqrt{2 \Log(q)}\right)}{ 
     e^{-2\imath\alpha} c^2 z^2 - 2 e^{-\imath\alpha} cz 
     \cosh\left(t \sqrt{2 \Log(q)}\right) + 1} 
     \right] dt \\ 
\label{eqn_FscAlphaBetauqz_FourierSquareSeries-scFnSinIntRep-stmt_v2} 
 & F_{\sin}\left(\alpha, \beta; q, c, z\right) = 
     \int_0^{\infty} \frac{e^{\imath\beta} e^{-t^2/2}}{\sqrt{2\pi}\imath} 
     \left[ 
     \frac{1 - e^{\imath\alpha} cz \cosh\left(t \sqrt{2 \Log(q)}\right)}{ 
     e^{2\imath\alpha} c^2 z^2 - 2 e^{\imath\alpha} cz 
     \cosh\left(t \sqrt{2 \Log(q)}\right) + 1} 
     \right] dt \\ 
\notag 
   & \phantom{F_{\sin}\bigl(\alpha, \beta; q, c } - 
     \int_0^{\infty} \frac{e^{-\imath\beta} e^{-t^2/2}}{\sqrt{2\pi}\imath} 
     \left[ 
     \frac{1 - e^{-\imath\alpha} cz \cosh\left(t \sqrt{2 \Log(q)}\right)}{ 
     e^{-2\imath\alpha} c^2 z^2 - 2 e^{-\imath\alpha} cz 
     \cosh\left(t \sqrt{2 \Log(q)}\right) + 1} 
     \right] dt. 
\end{align} 
\end{cor}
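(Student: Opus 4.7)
The plan is to reduce both parts of the corollary directly to Proposition \ref{prop_OrdExp_GeomSquareSeries} by using Euler's formula to decompose the trigonometric factor $\scFn(\alpha n+\beta)$ into a sum of two complex exponentials. Writing
\begin{align*}
\cos(\alpha n + \beta) &= \tfrac{1}{2}\left(e^{\imath(\alpha n+\beta)} + e^{-\imath(\alpha n+\beta)}\right), \\
\sin(\alpha n + \beta) &= \tfrac{1}{2\imath}\left(e^{\imath(\alpha n+\beta)} - e^{-\imath(\alpha n+\beta)}\right),
\end{align*}
I would pull the $\beta$-dependent constants outside of the sum over $n$ and absorb each factor of $e^{\pm \imath \alpha n}$ into the geometric base by setting $c \mapsto e^{\pm \imath \alpha} c$. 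This transforms both Fourier-type series into linear combinations of the geometric square series $G_{\sq}(q, e^{\pm \imath \alpha} c, z)$.

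Next, since $\alpha \in \mathbb{R}$ implies $|e^{\pm \imath \alpha} c z| = |cz| < 1$, the hypotheses of Proposition \ref{prop_OrdExp_GeomSquareSeries} apply to each of the shifted series, and I would substitute its integral representation in \eqref{eqn_geom_series_initial_integrals-form_v2} for each of the two terms. After multiplying by the prefactors $e^{\pm \imath\beta}/2$ (respectively $\pm e^{\pm \imath\beta}/(2\imath)$), the factor of $2$ appearing in the numerator of the integrand from Proposition \ref{prop_OrdExp_GeomSquareSeries} cancels precisely against the $\tfrac{1}{2}$ coming from Euler's formula. This should produce exactly the pair of integrals displayed in \eqref{eqn_FscAlphaBetauqz_FourierSquareSeries-scFnCosIntRep-stmt_v1} and \eqref{eqn_FscAlphaBetauqz_FourierSquareSeries-scFnSinIntRep-stmt_v2}.

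No deep obstacle arises here; the work is purely algebraic bookkeeping of exponents and signs. The only point requiring care is ensuring the sign conventions in the $\sin$ case are tracked correctly, so that the two integrals come out with the $1/(\sqrt{2\pi}\imath)$ weight and the correct minus sign between them, and verifying that the substitutions $c \mapsto e^{\pm \imath \alpha} c$ produce the exponential factors $e^{\pm \imath \alpha}$ and $e^{\pm 2\imath \alpha}$ in the numerator and denominator of the Poisson-kernel-like integrand in exactly the positions claimed in the statement. Once this bookkeeping is verified, the corollary follows immediately.
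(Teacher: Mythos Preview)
Your proposal is correct and follows essentially the same approach as the paper's own proof: expand $\cos(\alpha n+\beta)$ and $\sin(\alpha n+\beta)$ via Euler's formula, absorb $e^{\pm\imath\alpha}$ into $c$, observe that $|e^{\pm\imath\alpha}cz|=|cz|<1$ so Proposition~\ref{prop_OrdExp_GeomSquareSeries} applies to each geometric square series $G_{\sq}(q,e^{\pm\imath\alpha}c,z)$, and collect the resulting integrals with the $e^{\pm\imath\beta}/2$ (resp.\ $\pm e^{\pm\imath\beta}/(2\imath)$) prefactors. The cancellation of the factor of $2$ you note is exactly what produces the stated integrands.
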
 
\begin{proof} 
We prove the forms of these two integral representations by first 
expanding the trigonometric function sequences as follows 
\citep[\S 4.14]{NISTHB}: 
\begin{align} 
\notag 
\cos\left(\alpha n+\beta\right) \cdot c^{n} & = 
     \frac{e^{\imath\beta}}{2} \cdot \left(e^{\imath\alpha} c\right)^{n} + 
     \frac{e^{-\imath\beta}}{2} \cdot \left(e^{-\imath\alpha} c\right)^{n} \\ 
\notag 
\sin\left(\alpha n+\beta\right) \cdot c^{n} & = 
     \frac{e^{\imath\beta}}{2\imath} \cdot 
     \left(e^{\imath\alpha} c\right)^{n} - 
     \frac{e^{-\imath\beta}}{2\imath} \cdot 
     \left(e^{-\imath\alpha} c\right)^{n}. 
\end{align} 
Then since $|e^{\pm \imath\alpha}| \equiv 1$ whenever $\alpha \in \mathbb{R}$, 
whenever $|e^{\pm \imath\alpha} cz| \equiv |cz| < 1$, the 
generating functions for each of these sequences are expanded in terms of the 
geometric series sequence OGFs in 
\eqref{eqn_Table_SpCaseSequenceOGFFormulas-GeomSeriesBased_v1} as 
\begin{align} 
\notag 
F_{\cos}\left(\alpha, \beta; 1, c, z\right) & = 
     \frac{e^{\imath\beta}}{2} \cdot 
     G_{\sq}\left(1, e^{\imath\alpha} c, z\right) + 
     \frac{e^{-\imath\beta}}{2} \cdot 
     G_{\sq}\left(1, e^{-\imath\alpha} c, z\right) \\ 
\notag 
F_{\sin}\left(\alpha, \beta; 1, c, z\right) & = 
     \frac{e^{\imath\beta}}{2\imath} \cdot 
     G_{\sq}\left(1, e^{\imath\alpha} c, z\right) - 
     \frac{e^{-\imath\beta}}{2\imath} \cdot 
     G_{\sq}\left(1, e^{-\imath\alpha} c, z\right). 
\end{align} 
The pair of integral formulas stated as the results in 
\eqref{eqn_FscAlphaBetauqz_FourierSquareSeries-scFnCosIntRep-stmt_v1} and 
\eqref{eqn_FscAlphaBetauqz_FourierSquareSeries-scFnSinIntRep-stmt_v2} 
are then obtained as the particular special cases of 
Proposition \ref{prop_OrdExp_GeomSquareSeries} 
corresponding to the expansions in the previous equations. 
\end{proof} 

\begin{remark} 
\label{remark_FourierTypeSqSeries_AltCompactExps} 
Notice that when $\alpha, \beta \in \mathbb{R}$, the 
forms of the Fourier series OGFs, 
$F_{\scFn}\left(\alpha, \beta; 1, c, z\right)$ for each of the functions 
$\scFn \in \{\cos, \sin\}$, may be expressed in a 
slightly more abbreviated form by 
\begin{align} 
\notag 
F_{\scFn}\left(\alpha, \beta; 1, c, z\right) & = 
     \frac{e^{\imath\alpha} \scFn(\beta-\alpha) - \scFn(\beta)}{ 
     \left(e^{2\imath\alpha} - 1\right) \cdot 
     \left(1 - e^{-\imath\alpha} cz\right)} + 
     \frac{e^{\imath\alpha} \left(e^{\imath\alpha} \scFn(\beta) - 
     \scFn(\beta-\alpha)\right)}{ 
     \left(e^{2\imath\alpha} - 1\right) \cdot 
     \left(1 - e^{\imath\alpha} cz\right)} 
\end{align} 
The result in the corollary may then be expanded through this alternate form 
of the ordinary generating function as 
\begin{align} 
\label{eqn_FscAlphaBetauqz_FourierSquareSeries-scFnCosSinGenIntRep-stmt_v3} 
F_{\scFn}\left(\alpha, \beta; q, c, z\right) & = 
     \int_0^{\infty} \frac{e^{-t^2/2}}{\sqrt{2\pi}} \Biggl[ 
     \sum_{b=\pm 1} 
     \frac{2b \cdot e^{\imath\alpha} \left( 
     e^{b\imath\alpha} \scFn(\beta) - \scFn(\beta-\alpha) 
     \right)}{\left(e^{2\imath\alpha} - 1\right)} \times \\ 
\notag 
   & \phantom{= \int_0^{\infty} \frac{e^{-t^2/2}}{\sqrt{2\pi}} \Biggl[} \times 
     \frac{1 - e^{b \imath\alpha} cz \cosh\left(t \sqrt{2 \Log(q)}\right)}{ 
     e^{2b\imath\alpha} c^2 z^2 - 2 e^{b\imath\alpha} cz 
     \cosh\left(t \sqrt{2 \Log(q)}\right) + 1} 
     \Biggr] dt, 
\end{align} 
for any $\alpha, \beta \in \mathbb{R}$ and whenever $c,z \in \mathbb{C}$ 
are chosen such that $|cz| < 1$. 
\end{remark} 

\EndGroupingSubEquations{} 

\subsection{Fourier Series Expansions of the Jacobi Theta Functions} 

Exact new integral representations for the asymmetric, unilateral 
Fourier series expansions of the Jacobi theta functions defined in the 
introduction are expressed through the results in 
Corollary \ref{cor_FscAlphaBetauqz_FourierSquareSeries-scFnsCosSinIntReps} and 
Remark \ref{remark_FourierTypeSqSeries_AltCompactExps} as follows 
\citep[\S 20.2(i)]{NISTHB}: 
\begin{align*} 
\vartheta_1(u, q) & = 2 q^{1/4} F_{\sin}\left(2u, u; q, q, -1\right) \\ 
\vartheta_2(u, q) & = 2 q^{1/4} F_{\cos}\left(2u, u; q, q, 1\right) \\ 
\vartheta_3(u, q) & = 1 + 2 F_{\cos}\left(2u, 0; q, 1, 1\right) \\ 
\vartheta_4(u, q) & = 1 - 2q F_{\cos}\left(2u, 2u; q, q^2, -1\right) 
\end{align*} 
We do not provide the explicit integral representations for each of these 
theta function variants in this section since these expansions 
follow immediately by substitution of the integral formulas from the 
last subsection above. 

\section{Conclusions} 

\subsection{Summary} 

We have proved the forms of several new integral representations for 
geometric-series-type square series transformations, 
exponential-series-type square series transformations, and 
Fourier-type square series transformations. 
Specific applications of the new results in the article include special case 
integrals for special infinite products, theta functions, and many 
examples of new single and double integral representations for other special 
constant values and identities. 
The particular forms of the explicit special case 
applications cited within the article are easily extended to enumerate 
many other series identities and special functions that arise in practice. 

One key aspect we have not discussed within the article is the relations 
of the geometric and Fourier-type transformation integrals to other 
Fourier series expansions. 
We remarked in 
Section \ref{subSection_EGFs_and_GraphTheory_Examples} 
about the similarity of the geometric square series characteristic 
expansions to the Poisson kernel \citep[\S 1.15(iii)]{NISTHB}. 
Two other Fourier series expansions related to these integral forms 
are given by 
\begin{align*} 
\sum_{k \geq 0} \frac{\sin(kx)}{r^k} & = 
     \frac{r \sin(x)}{r^2 - 2r \cos(x) + 1} \\ 
\sum_{k \geq 1} \frac{\cos(kx)}{r^k} & = 
     \frac{r \cos(x) - 1}{r^2 - 2r \cos(x) + 1}. 
\end{align*} 
One possible topic of future work on these transformations is to 
consider the relations of the square series generating function transformations 
to known Fourier series expansions, identities, and transforms -- 
even when the integral depends on a formal power series parameter $z$ 
which is independent of the squared series parameter $q$. 

\subsection{Comparisons to the Weierstrass Elliptic Functions} 

Perhaps the most striking similarities to the integral representations 
offered by the results 
in \sref{Section_Applications_of_GeomSqSeries} and 
Section \ref{Section_ExpansionsOf_Fourier-TypeSqSeries} 
of this article are found in the \emph{NIST Handbook of Mathematical Functions} 
citing a few relevant properties of the 
\keywordemph{Weierstrass elliptic functions}, $\wp(z)$ and $\zeta(z)$ 
\citep[\S 23; \S 23.11]{NISTHB}. 
The similar integral forms of interest satisfied by these 
elliptic functions are expanded through the auxiliary functions, 
$f_1(s, \tau)$ and $f_2(s, \tau)$, defined by 
\eqref{eqn_WEllipticFns_AuxIntFns_f1f2_stau_defs} below. 
The particular integral representations for the functions, 
$\wp(z)$ and $\zeta(z)$, are restated in 
\eqref{eqn_WEllipticFns_IntReps_ReStmts-v1} and 
are then compared to the 
forms of the integrals established in 
Proposition \ref{prop_OrdExp_GeomSquareSeries} and 
Remark \ref{prop_GenGeomSqSeries_integral_stmt} 
of this article. 

Let the parameter $\tau := \omega_3 / \omega_1$ for fixed 
$\omega_1, \omega_3 \in \mathbb{C} \setminus \{0\}$, and define the 
functions, $f_1(s, \tau)$ and $f_2(s, \tau)$, 
over non--negative $s \in \mathbb{R}$ as in the following equations 
\citep[eq. (23.11.1); \S 23.11]{NISTHB}: 
\begin{align} 
\label{eqn_WEllipticFns_AuxIntFns_f1f2_stau_defs} 
f_1(s, \tau) & := \frac{\cosh^2\left(\frac{1}{2} \tau s\right)}{ 
     e^{-2s} - 2 e^{-s} \cosh\left(\tau s\right) + 1} \\ 
\notag 
f_2(s, \tau) & := \frac{\cos^2\left(\frac{1}{2} s\right)}{ 
     e^{2 \imath\tau s} - 2 e^{\imath\tau s} \cos\left(s\right) + 1}. 
\end{align} 
Provided that both $-1 < \Re(z + \tau) < 1$ and $|\Im(z)| < \Im(\tau)$, 
these elliptic functions have the integral representations 
given in terms of the auxiliary functions from 
\eqref{eqn_WEllipticFns_AuxIntFns_f1f2_stau_defs} 
stated in the respective forms of the next equations 
\citep[\S 23.11]{NISTHB}. 
\begin{align} 
\label{eqn_WEllipticFns_IntReps_ReStmts-v1} 
\wp(z) & = \frac{1}{z^2} + 8 \times \int_0^{\infty} s \left[ 
     e^{-s} \sinh^2\left(\frac{zs}{2}\right) f_1(s, \tau) + 
     e^{\imath\tau s} \sin^2\left(\frac{zs}{2}\right) f_2(s, \tau) 
     \right] ds \\ 
\notag 
\zeta(z) & = \frac{1}{z} + \int_0^{\infty} \left[ 
     e^{-s} \left(zs - \sinh(zs)\right) f_1(s, \tau) - 
     e^{\imath\tau s} \left(zs - \sinh(zs)\right) f_2(s, \tau) 
     \right] ds 
\end{align} 
Notice that the first function, $f_1(s, \tau)$, defined in 
\eqref{eqn_WEllipticFns_AuxIntFns_f1f2_stau_defs} is similar in form to 
many of the identities and special function examples cited as 
applications in \sref{Section_Applications_of_GeomSqSeries} and 
\sref{Section_ExpansionsOf_Fourier-TypeSqSeries}. 
In this case, the parameter $q$ corresponds to an exponential function of 
$\tau$. 
The second definition of the function, $f_2(s, \tau)$, given in terms of the 
cosine function is also similar in form to the integrands that 
result from the computations of the explicit special values of 
Ramanujan's functions, $\varphi(q)$ and $\psi(q)$, 
derived as applications in 
Corollary \ref{cor_VarPhi_JTheta3_Integrals_for_SpValues} and 
Corollary \ref{cor_RamPsi_JTheta2_Integrals_for_SpValues} 
in \sref{subsubSection_Apps_SpCases_of_the_JacobiThetaFns} 
\citep[\cf \S 24.7(ii)]{NISTHB}. 

\subsection{Some Limitations of the Geometric-Series-Based Transformations} 

Suppose that $q, z(q) \in \mathbb{C}$ are selected such that 
$|q| < 1$ and $z(q) \neq 0$. Then the bilateral square series, 
$B_{\sq}(q, z)$ and $B_{a,b}(r_, s; q, z)$ defined below, 
converge and have resulting unilateral series expansions 
\citep[\cf \S 19.8]{HARDYWRIGHTNUMT} of the form 
\begin{align} 
\label{eqn_Bsqcqz_Bilateral_SqSeries_Variant-intro_stmt_v1} 
B_{\sq}(q, z) & := \sum_{n=-\infty}^{\infty} q^{n^2} z^n \\ 
\notag 
   & = 
     1 + \sum_{n=1}^{\infty} q^{n^2} \left(z^{n} + z^{-n}\right) \\ 
\label{eqn_Babrsqz_Bilateral_SqSeries_Variant-intro_stmt_v2} 
B_{a,b}\left(r, s; q, z\right) & := \sum_{n=-\infty}^{\infty} 
     (-1)^{n} (an+b) q^{n(rn+s)} z^n \\ 
\notag 
   & \phantom{:} = 
     \sum_{n=0}^{\infty} (-1)^n (an+b) q^{n(rn+s)} z^n - 
     \sum_{n=1}^{\infty} (-1)^{n} (an-b) q^{n(rn-s)} z^{-n}. 
\end{align} 
Notice that even though both of the series defined in 
\eqref{eqn_Bsqcqz_Bilateral_SqSeries_Variant-intro_stmt_v1} and 
\eqref{eqn_Babrsqz_Bilateral_SqSeries_Variant-intro_stmt_v2} 
converge for all $z(q) \in \mathbb{C}$ whenever $|q| < 1$, the 
square series integrals derived in 
Section \ref{Section_Applications_of_GeomSqSeries} 
cannot be applied 
directly to expand the forms of these bilateral series. 
This results from the constructions of the geometric-series-based 
transformations which are derived from the OGF, $F(z) \equiv (1-z)^{-1}$, 
which only converges when $|z| < 1$, and not in the 
symmetric series case where $|1/z| > 1$. 

\renewcommand{\refname}{References} 
\bibliographystyle{amsalpha} 

\providecommand{\bysame}{\leavevmode\hbox to3em{\hrulefill}\thinspace}
\providecommand{\MR}{\relax\ifhmode\unskip\space\fi MR }
\providecommand{\MRhref}[2]{%
  \href{http://www.ams.org/mathscinet-getitem?mr=#1}{#2}
}

\end{document}